\documentclass[11pt]{article}
\usepackage{geometry}
\usepackage[utf8]{inputenc}
\usepackage{amssymb,amsmath,amsfonts,amsthm}
\usepackage{array}
\usepackage{mathrsfs}
\usepackage{graphicx,epsfig}
\usepackage{caption}
\usepackage{subcaption}

\usepackage{url}
\usepackage{lscape}
\usepackage{xspace}
\usepackage{algorithm}

\usepackage{algpseudocode}
\usepackage{hyperref}
\usepackage{mathtools}
\usepackage{multirow,multicol}
\usepackage{cleveref}
\usepackage{comment}
\usepackage{enumitem}

\usepackage{ulem}
\usepackage[usenames,dvipsnames]{color}
\usepackage{cancel}
\usepackage{mathdots}
\usepackage{diagbox}
\usepackage{blkarray}
\usepackage{extarrows}
\usepackage{arydshln}
\usepackage{mathtools}
\mathtoolsset{showonlyrefs=true}

\usepackage[title]{appendix}

\def\k{\mathbf{k}}

\def\r{\mathbf{r}}

\def\u{\mathbf{u}}
\def\v{\mathbf{v}}

\def\x{\mathbf{x}}

\newtheorem{thm}{Theorem}[section]
\newtheorem{remark}[thm]{Remark}
\newtheorem{lem}[thm]{Lemma}

\crefname{lem}{Lemma}{Lemmas}
\crefname{thm}{Theorem}{Theorems}
\crefname{cor}{Corollary}{Corollaries}
\crefname{pr}{Proposition}{Propositions}
\crefname{remark}{Remark}{Remarks}
\crefname{algorithm}{Algorithm}{Algorithms}
\crefname{appendix}{}{Appendices}
\crefname{section}{Section}{Sections}
\crefname{figure}{Figure}{Figures}

\numberwithin{equation}{section}

\crefname{table}{Table}{Tables}

\title{A Novel Computational and Analytical Framework for 2D Quasiperiodic Helmholtz Eigenvalue Problems via the Projection Method}

\author{
Teng-Chao Sun\footnote{School of Mathematics and Shing-Tung Yau Center, Southeast University, Nanjing 211189, People's Republic of China.}, \quad
Tiexiang Li\footnote{Corresponding author (txli@seu.edu.cn). School of Mathematics and Shing-Tung Yau Center, Southeast University, Nanjing 211189, People's Republic of China; Shanghai Institute for Mathematics and Interdisciplinary Sciences (SIMIS), Shanghai 200433, People's Republic of China.}, \quad
Wen-Wei Lin\footnote{Shanghai Institute for Mathematics and Interdisciplinary Sciences (SIMIS), Shanghai 200433, People's Republic of China; Department of Applied Mathematics, National Yang Ming Chiao Tung University, Hsinchu 300, Taiwan.} \quad
Xing-Long Lyu\footnote{School of Mathematical Sciences, Nanjing Normal University, Nanjing 210023, People's Republic of China}
}

\date{}

\begin{document}

\maketitle

\begin{abstract}
In this paper, we propose a spectral framework that embeds 1D and 2D quasiperiodic Helmholtz eigenvalue problems into higher-dimensional (2D and 4D) periodic spaces via the projection method \cite{jiang2014numerical, jiang2024numerical}. To effectively map the elevated high-dimensional states back to the original physical space, we establish a novel validation framework based on the weighted expectation of pointwise Rayleigh quotients. Supported by comprehensive error and spectral analysis, we demonstrate that the eigenvalues derived from this expectation align more authentically with the original quasiperiodic model, ultimately yielding a more appropriate and reliable eigenpair solution. Numerical experiments on  continuous media demonstrate that our approach offers an accurate, robust, and scalable tool for solving quasiperiodic Helmholtz eigenvalue problems.
\end{abstract}

\textbf{Key words:} quasiperiodic system, Helmholtz eigenvalue problem, projection method, Fourier-pseudospectral method, pointwise Rayleigh quotient

\section{Introduction}

Quasiperiodic structures, characterized by long-range orientational order without any translational periodicity, have continuously fascinated the physics and mathematics communities since the groundbreaking discovery of quasicrystals by Shechtman \textit{et al.} \cite{shechtman1984metallic, levine1984quasicrystals}. Unlike conventional periodic system, quasiperiodic systems harbor a unique class of exotic wave phenomena, encompassing critical localized states, Cantor-set energy spectra, and Hofstadter butterfly patterns \cite{kohmoto1987localization, suto1989singular, bellissard1989spectral}. In the realm of electromagnetics and optics, photonic quasicrystals have emerged as a powerful platform for manipulating light propagation and controlling spontaneous emission \cite{vardeny2013optics, dal2003light}. By introducing quasiperiodic dielectric modulations, such materials exhibit complete photonic bandgaps, slow light effects, and robust topological boundary states \cite{chan1998photonic, freedman2006wave, vardeny2013optics}. Consequently, they hold tremendous potential in the design of high-$Q$ cavities, perfect absorbers, and optical sensors \cite{macia2006role, steinhardt1987physics}. The rigorous understanding and precise exploitation of these physical phenomena critically depend on the efficient solution of the governing partial differential equations endowed with quasiperiodic coefficients \cite{lifshitz2003quasicrystals}.

Solving quasiperiodic eigenvalue problems is computationally challenging due to the lack of spatial periodicity, which fundamentally invalidates the classical Bloch-Floquet theorem \cite{simon1982almost}. The traditional and most prevalent numerical approach is the crystalline approximant method (CAM), also referred to as the periodic approximation or supercell method \cite{goldman1993quasicrystals, tsunetsugu1991electronic}. By enforcing artificial periodic boundary conditions on a large truncated domain, the CAM approximates incommensurate frequencies using commensurate ones via simultaneous Diophantine approximation (SDA) \cite{davenport1946simultaneous, jiang2025approximation}. However, achieving high fidelity requires exponentially large supercells, which incurs severe finite-size effects and intractable computational costs, especially for multidimensional or highly discontinuous problems \cite{steurer2009crystallography}. To fundamentally circumvent these limitations, the cut-and-project method provides a rigorous geometric framework, revealing that any $d$-dimensional quasiperiodic structure can be obtained by projecting a high-dimensional ($n>d$) periodic lattice through an irrational orientation window \cite{de1981algebraic, duneau1985quasiperiodic}. This elegant mathematical insight shifts the perspective from analyzing non-periodic structures in physical space to analyzing perfectly periodic structures in higher dimensions \cite{avila2009ten}.

In recent years, the projection method (PM) has been developed as a natural and exact numerical framework for quasiperiodic systems \cite{jiang2014numerical}. By embedding the low-dimensional quasiperiodic functions into a higher-dimensional periodic space, the PM entirely bypasses the Diophantine approximation error and restores the applicability of fast Fourier transforms and spectral methods \cite{jiang2014numerical, jiang2024numerical, jiang2025projection}. Furthermore, a numerical approach capable of accurately recovering solutions for equations with continuous and low-regularity coefficients using a finite number of points has been recently proposed in \cite{jiang2024accurately}. Specifically addressing the Schr{\"o}dinger eigenvalue problem, \cite{jiang2025irrational} introduced an efficient preconditioning scheme based on irrational window filtering within the framework of the projection spectral method. Very recently, \cite{jiang2026quasiperiodic} theoretically established the spectral convergence for these projected eigenvalue problems at the continuous level. However, a practical numerical analysis detailing the exact mapping error from the truncated high-dimensional space back down to the original low-dimensional system is still not complete. Therefore, the convergence and numerical error associated with mapping the elevated eigenvalues by the projected spectral method back to the original quasiperiodic problem remains a concern.

Building upon the advancements in projection methods, we investigate the numerical methods for solving 1D and 2D Helmholtz eigenvalue problems (HEVPs) embedded into 2D and 4D spaces with quasiperiodic coefficients. In 2D space, the governing Helmholtz equations reduce to Helmholtz equations for transverse electric (TE) and transverse magnetic (TM) modes:
\begin{align}
    -\nabla\cdot(\varepsilon_\bot^{-1}(\mathbf{r})\nabla H_z(\mathbf{r}))&=\omega^2\mu_z(\mathbf{r})H_z(\mathbf{r}), \label{evp: TE mode} \\
    -\nabla\cdot(\mu_\bot^{-1}(\mathbf{r})\nabla E_z(\mathbf{r}))&=\omega^2\varepsilon_z(\mathbf{r})E_z(\mathbf{r}), \label{evp: TM mode}
\end{align}
respectively. Here, the coefficients $\varepsilon_z(\mathbf{r})$ and $\mu_z(\mathbf{r})$ are explicitly defined as scalar quasiperiodic functions with $\mathbf{r}\in\mathbb{R}^2$. The corresponding electric and magnetic fields are formulated as $E(\mathbf{r})=[E_x(\mathbf{r}),E_y(\mathbf{r}),0]^\mathsf{T}$ and $H(\mathbf{r})=[0,0,H_z(\mathbf{r})]^\mathsf{T}$ for the TE mode, and $E(\mathbf{r})=[0,0,E_z(\mathbf{r})]^\mathsf{T}$ and $H(\mathbf{r})=[H_x(\mathbf{r}),H_y(\mathbf{r}),0]^\mathsf{T}$ for the TM mode. The main contributions are summarized as follows:
\begin{itemize}
    \item Based on the projection method, we establish a unified spectral framework for embedding 1D/2D Helmholtz eigenvalue problems with continuous quasiperiodic coefficients into higher-dimensional periodic spaces.
    \item We propose a rigorous validation framework based on the weighted expectation of pointwise Rayleigh quotients to establish numerical consistency, ensuring the resulting eigenpairs authentically represent the original quasiperiodic system.
    \item We provide a comprehensive theoretical error and spectral analysis for the pointwise Rayleigh quotients, proving that the computed eigenvalues from the embedded periodic space accurately approximate the infinite-dimensional quasiperiodic spectrum.
\end{itemize}


\paragraph{Notation.}
We use $\imath=\sqrt{-1}$ for the imaginary unit. Bold lowercase letters are used for vectors, such as the physical variable $\r$ and the higher-dimensional periodic variable $\x$. The weighted expectation with respect to the probability distribution $\mathrm{p}$ is written as $\mathbb E_{\mathrm p}[\cdot]$. Superscripts $\boldsymbol{u}^{(1)}$ and $\boldsymbol{u}^{(2)}$ are used to distinguish quantities associated with the 1D quasiperiodic system and the 2D quasiperiodic system, respectively.

\section{1D HEVPs via Associated 2D Embedded EVPs} \label{section: 1Dto2D}

In this section, to illustrate our idea more clearly, we begin with the simplest 1D HEVPs arising from the TM-mode in \eqref{evp: TM mode} with $\mu(r) \equiv 1$
\begin{equation} \label{evp: 1D HEVPs}
- \frac{d^2}{dr^2} u(r) = \lambda \, \varepsilon(r) u(r),
\quad r \in \mathbb{R},
\end{equation}
where $u(r) = E_z(r)$ is a scalar field, $\varepsilon(r)$ is a quasiperiodic function of $r$, and $\lambda$ denotes the corresponding eigenvalue.

A direct finite-difference (FD) discretization of \eqref{evp: 1D HEVPs} leads to the following infinite-dimensional generalized eigenvalue problem (GEVP):
\begin{subequations} \label{total: discrete form of 2D}
\begin{equation} \label{df: 1D HEVP}
\mathcal{A} \boldsymbol{u} = \lambda \mathcal{B} \boldsymbol{u},
\end{equation}
where $\boldsymbol{u}=[\ldots,u_{-1},u_0,u_1,\ldots]^\mathsf{T}$ is the eigenvector corresponding to the eigenvalue $\lambda\in\mathbb{R}$. The operators $\mathcal{A}$ and $\mathcal{B}$ in \eqref{df: 1D HEVP} are given by the discrete Laplacian and a diagonal coefficient operator:
\begin{equation}\label{df: infinite matrix}
\mathcal{A} = \frac{1}{h^2}\,\mathrm{tridiag}(c_{i-1},\, a_i,\, b_i)_{i=0}^{\pm\infty},
\quad
\mathcal{B} = \mathrm{diag}(\varepsilon_i)_{i=0}^{\pm \infty},
\end{equation}
where $(c_{i-1},a_i,b_i)\equiv(-1,2,-1)$ are the subdiagonal, diagonal, and superdiagonal entries in the $i$th row, respectively, $\varepsilon_i = \varepsilon(r_i)$ is the $i$th diagonal entry and $h$ is the mesh size of the discretization.
\end{subequations}

Since the eigenvalue problem \eqref{total: discrete form of 2D} is infinite-dimensional, a direct numerical treatment requires truncation. One possible approach is to restrict the problem to a large but finite interval and impose suitable boundary conditions, such as Dirichlet boundary conditions. This leads to a finite-dimensional GEVP of size $(2n+1)\times(2n+1)$
\begin{subequations} \label{total: truncated scheme}
    \begin{equation} \label{df: truncated scheme}
    A_{\mathrm{t}} \mathbf{u}_{\mathrm{t}} = \lambda_{\mathrm{t}} B_{\mathrm{t}} \mathbf{u}_{\mathrm{t}},
    \end{equation}
where $\mathbf{u}_{\mathrm{t}} = [u_{-n}, \dots, u_0, \dots, u_{n}]^\mathsf{T}$, and
\begin{align}\label{df: truncated AB}
&A_{\mathrm{t}} = \frac{1}{h^2}\,\mathrm{tridiag}(c_{i-1},\, a_i,\, b_i)_{i=0}^{\pm n}
\in \mathbb{R}^{(2n+1) \times (2n+1)}, \quad  B_{\mathrm{t}} = \mathrm{diag}(\varepsilon_{-n}, \dots, \varepsilon_0, \dots, \varepsilon_{n}).
\end{align}
\end{subequations}

While the truncated system can partially approximate the original quasiperiodic problem, the computed eigenvalues $\lambda_{\mathrm{t}}$ often suffer from significant numerical errors due to the SDA \cite{davenport1946simultaneous, jiang2025approximation}. Since $\varepsilon(r)$ in \eqref{evp: 1D HEVPs} is quasiperiodic, it can, in general, be embedded into a periodic 2-torus $\mathcal{T}^2$ with period $\mathbf{T} = (T_1, T_2)$ via a projection matrix $\, P = \left(p_1,  p_2\right)^\mathsf{T}$  with the components $p_1$ and $p_2$ being rationally independent, i.e. $\alpha_1p_1 + \alpha_2p_2 = 0, \, \forall \,\alpha_1, \alpha_2 \in \mathbb{Q}$ if and only if $\alpha_1 = \alpha_2 = 0.$ Motivated by the projection method and the associated theory in \cite{jiang2014numerical, jiang2024numerical},
we define the map
\begin{equation} \label{eq: projection relationship}
\mathbf{x} = P r =
\begin{bmatrix}
p_1 \\ p_2
\end{bmatrix} r,
\quad
\mathcal{E}(x_1+T_1, \, x_2 + T_2) = \mathcal{E}(x_1, x_2),
\quad
\varepsilon(r) = \mathcal{E}(P r).
\end{equation}

Furthermore, Let $U(\x)$ be a function defined on $\mathcal{T}^2$ such that
\begin{equation} \label{eq: projection field}
U(\mathbf{x}) = U(Pr) = u(r) , \quad P = \left(p_1,  p_2\right)^\mathsf{T} .
\end{equation}
Along the embedded line $\mathbf{x}=Pr$, the chain rule on $\frac{d}{dr} U(\mathbf{x})$ and $ \frac{d^2}{dr^2} U(\mathbf{x})$ yields
\begin{align} \label{eq: chain rule difference}
\frac{d}{dr} U(\mathbf{x})
&=
\left(
\frac{d x_1}{d r} \frac{\partial}{\partial x_1}
+
\frac{d x_2}{d r} \frac{\partial}{\partial x_2}
\right) U(\mathbf{x}) \notag \\
&=
\left(
p_1 \frac{\partial}{\partial x_1}
+
p_2 \frac{\partial}{\partial x_2}
\right) U(\mathbf{x}) \notag \\
&=
P^\mathsf{T} \nabla_{\mathbf{x}} U(\mathbf{x})
=: \nabla_P U(\mathbf{x}),
\end{align}
and
\begin{equation} \label{eq: chain rule ord2 diff}
\frac{d^2}{dr^2} U(\mathbf{x})
=
\left( p_1 \frac{\partial}{\partial x_1}
+
p_2 \frac{\partial}{\partial x_2}\right)^2 U(\mathbf{x})
=
\nabla_P^2 U(\mathbf{x}),
\end{equation}
where $\nabla_P^2$ denotes the second-order directional derivative along the projection direction. Hence, combining \eqref{eq: chain rule ord2 diff} with \eqref{eq: projection relationship}--\eqref{eq: projection field}, the associated 2D embedded eigenvalue problem of \eqref{evp: 1D HEVPs} becomes
\begin{equation} \label{evp: 1Dto2D continous projection}
- \nabla_P^2 U(\mathbf{x})
=
\lambda \, \mathcal{E}(\mathbf{x}) U(\mathbf{x}),
\quad
\mathbf{x} \in \mathcal{T}^2 \subseteq \mathbb{R}^2.
\end{equation}

We employ the Fourier-pseudospectral method \cite{Orszag1972Comparison} to solve the embedded eigenvalue problem \eqref{evp: 1Dto2D continous projection} on the 2-torus $\mathcal{T}^2$. Conventional real-space discretizations, such as FD, struggle in this higher-dimensional framework due to a fundamental irrational-direction bottleneck. Specifically, the physical 1D structure is embedded in $\mathcal{T}^2$ along a direction with irrational slope. A uniform Cartesian FD grid cannot align exactly with this incommensurate projection, and approximating it invariably induces severe ``staircase errors'' that pollute the wave dynamics \cite{taflove2005computational}. In contrast, the Fourier--pseudospectral method expands the solution in global Fourier bases, so the irrational directional derivatives can be evaluated analytically in reciprocal space, thereby eliminating grid-alignment errors \cite{Orszag1972Comparison, jiang2014numerical}.

Since $\mathcal{E}(\mathbf{x})$ is periodic on $\mathcal{T}^2$, we assume that the solution $U(\x)$ of \eqref{evp: 1Dto2D continous projection} has the Bloch-Floquet form \cite{joannopoulos2011photonic}
\begin{equation} \label{eq: Bloch expand}
U(\mathbf{x}) = e^{\imath \mathbf{k} \cdot \mathbf{x}} U_p(\mathbf{x}),
\end{equation}
 where $U_p$ is $\mathbf{T}$-periodic and $\mathbf{k}$ is the Bloch wave vector in the embedded reciprocal space. It plays the same role as the Bloch wave vector in periodic photonic crystals, but it is associated with the higher-dimensional periodic formulation rather than the original low-dimensional physical domain. Since $\mathcal{E}(\x)$ and $U_p(\x)$ are $\mathbf{T}$-periodic functions on the torus $\mathcal{T}^2$, while $U(\mathbf{x})$ is the corresponding $\mathbf{k}$-quasiperiodic function, we can expand them into Fourier series over the reciprocal space as

\begin{subequations} \label{total: 2D project fourier span}
    \begin{equation} \label{eq: fourier span}
\mathcal{E}(\mathbf{x}) = \sum_{\boldsymbol{\xi} \in \mathcal{J}} \mathcal{E}_{\boldsymbol{\xi}} e^{\imath \boldsymbol{\xi} \cdot \mathbf{x}},
\quad
U_p(\mathbf{x}) = \sum_{\boldsymbol{\xi} \in \mathcal{J}} U_{\boldsymbol{\xi}} e^{\imath \boldsymbol{\xi} \cdot \mathbf{x}},
\end{equation}

\begin{equation} \label{eq: field fourier span}
U(\mathbf{x}) = e^{\imath \mathbf{k} \cdot \mathbf{x}}U_p(\mathbf{x}) = \sum_{\boldsymbol{\xi} \in \mathcal{J}} U_{\boldsymbol{\xi}} e^{\imath (\mathbf{k} + \boldsymbol{\xi}) \cdot \mathbf{x}},
\end{equation}
where the index set $\mathcal{J}$ is defined by
\begin{align} \label{set: reciprocal space}
\mathcal{J} = \left\{ \left(\frac{2\pi}{T_1}i,\frac{2\pi}{T_2}j \right) \,\middle|\, i,j \in \left\{-N, \ldots, N-1\right\} \right\}, \quad \text{with} \,\, \#\mathcal{J} = (2N)^2.
\end{align}
\end{subequations}

Applying the projected derivative defined in \eqref{eq: chain rule difference}--\eqref{eq: chain rule ord2 diff} to \eqref{eq: field fourier span}, we get
\begin{align} \label{eq: ord2 diff derive}
    \nabla_{P}^{2} U(\mathbf{x}) &= \nabla_{P} \left( \sum_{\boldsymbol{\xi} \in \mathcal{J}} U_{\boldsymbol{\xi}} P^{\mathsf{T}} \nabla_{\mathbf{x}} e^{\imath (\mathbf{k}+\boldsymbol{\xi}) \cdot \mathbf{x}} \right) \notag  = \nabla_{P} \left( \sum_{\boldsymbol{\xi} \in \mathcal{J}} U_{\boldsymbol{\xi}} \left( \imath P^{\mathsf{T}} \left(\mathbf{k}+\boldsymbol{\xi}\right) \right) e^{\imath(\mathbf{k}+\boldsymbol{\xi}) \cdot \mathbf{x}} \right) \notag \\
    &= -\sum_{\boldsymbol{\xi} \in \mathcal{J}} \left( P^{\mathsf{T}} (\mathbf{k}+\boldsymbol{\xi}) \right)^{2} U_{\boldsymbol{\xi}} e^{\imath(\mathbf{k}+\boldsymbol{\xi}) \cdot \mathbf{x}}.
\end{align}
The multiplication term leads to a discrete convolution in Fourier space. More precisely, within the truncated index set $\mathcal{J}$, the resulting index is understood modulo the Fourier grid, i.e., $\boldsymbol{\xi} = \boldsymbol{\xi}^\prime + \boldsymbol{\xi}^{\prime\prime}
\,  \pmod{2N},$ we obtain
\begin{align} \label{eq: Convolution of Fourier coef}
\mathcal{E}(\mathbf{x}) U(\mathbf{x})
&=
\left(
\sum_{\boldsymbol{\xi}^{\prime \prime} \in \mathcal{J}} \mathcal{E}_{\boldsymbol{\xi}^{\prime \prime}} e^{\imath \boldsymbol{\xi}^{\prime \prime} \cdot \mathbf{x}}
\right)
\left(
\sum_{\boldsymbol{\xi}^{\prime} \in \mathcal{J}} U_{\boldsymbol{\xi}^{\prime}} e^{\imath (\mathbf{k} + \boldsymbol{\xi}^{\prime}) \cdot \mathbf{x}}
\right) \notag =
\sum_{\boldsymbol{\xi}^{\prime \prime} \in \mathcal{J}} \sum_{\boldsymbol{\xi}^{\prime} \in \mathcal{J}}\mathcal{E}_{\boldsymbol{\xi}^{\prime \prime}}U_{\boldsymbol{\xi}^{\prime}} e^{\imath (\mathbf{k} + \boldsymbol{\xi}^{\prime \prime} + \boldsymbol{\xi}^{\prime}) \cdot \mathbf{x}} \notag \\
&=
\sum_{\boldsymbol{\xi} \in \mathcal{J}}
\left(
\sum_{\boldsymbol{\xi}^{\prime} \in \mathcal{J}}
\mathcal{E}_{\boldsymbol{\xi} - \boldsymbol{\xi}^{\prime}} U_{\boldsymbol{\xi}^{\prime}}
\right)
e^{\imath (\mathbf{k} + \boldsymbol{\xi}) \cdot \mathbf{x}}.
\end{align}

Let $\mathcal{J}_\v = $ vec($\mathcal{J}$) denote the vectorized index set of $\mathcal{J}$. By the orthogonality of the Fourier basis, substituting \eqref{eq: ord2 diff derive} and \eqref{eq: Convolution of Fourier coef} into \eqref{evp: 1Dto2D continous projection}  yields the discrete system
\begin{equation} \label{df: Spectral projection term}
\left( P^\mathsf{T} (\mathbf{k} + \boldsymbol{\xi}) \right)^2 U_{\boldsymbol{\xi}}
=
\widetilde{\lambda}
\sum_{\boldsymbol{\xi}^{\prime} \in \mathcal{J}_\v}
\mathcal{E}_{\boldsymbol{\xi} - \boldsymbol{\xi}^{\prime}} U_{\boldsymbol{\xi}},
\end{equation}
where $\widetilde{\lambda}$ denotes the eigenvalue of the truncated Fourier-discretized system, distinguished from the continuous eigenvalue $\lambda$.

Then, for a given $\k$, we obtain the discrete GEVP corresponding to \eqref{evp: 1Dto2D continous projection}:
\begin{equation} \label{df: Spectral Method 2D}
K U= \widetilde{\lambda} M U,  \quad \text{with} \quad U = \left(U_{\boldsymbol{\xi}}\right)_{\boldsymbol{\xi} \in \mathcal{J}_\v},
\end{equation}
where $K = \left[ K_{\boldsymbol{\xi}\boldsymbol{\xi}} \right]_{\boldsymbol{\xi} \in \mathcal{J}_\v} $ is a diagonal matrix with $K_{\boldsymbol{\xi}\boldsymbol{\xi}} = \left( P^\mathsf{T} (\mathbf{k} + \boldsymbol{\xi}) \right)^2$  and $M$ is a block Toeplitz matrix with $ M_{\boldsymbol{\xi}\boldsymbol{\xi}^\prime} = \mathcal{E}_{\boldsymbol{\xi} - \boldsymbol{\xi}^\prime}$, for $\, \boldsymbol{\xi}, \boldsymbol{\xi}\prime \in \mathcal{J}_\v.$

For any  fixed Bloch vector $\mathbf{k}$, once the discrete eigenpair $(\widetilde{\lambda},U)$ has been computed, the corresponding eigenfunction can be projected back to the 1D physical domain by using \eqref{eq: projection field} and \eqref{eq: field fourier span}. This gives
\begin{equation} \label{sol: spectral projection}
\widehat{u}(r)
= U(P r)
= \sum_{\boldsymbol{\xi} \in \mathcal{J}_\v}
U_{\boldsymbol{\xi}} e^{\imath \left(P^\mathsf{T}(\mathbf{k} + \boldsymbol{\xi})\right) r},
\end{equation}
for 1D HEVP \eqref{evp: 1D HEVPs}.  Note that since the components of $P=(p_1,p_2)^{\mathsf{T}}$ are rationally independent, $\widehat{u}(r)$ is generally quasiperiodic rather than periodic.

To assess whether the computed eigenvalue $\widetilde{\lambda}$ from \eqref{df: Spectral Method 2D} by the Fourier-pseudospectral method is a good approximation to $\lambda$ in \eqref{evp: 1D HEVPs}, we use the 1D infinite-dimensional FD discretization \eqref{total: discrete form of 2D} together with a pointwise Rayleigh quotient (RQ) to perform validation over a sufficiently large set of sampling points. Specifically, we denote the discretization of the reconstructed eigenfunction $\widehat{u}(r)$ over the entire 1D domain $\mathbb{R}$ by the infinite vector $\widehat{\boldsymbol{u}}$ and evaluate its accuracy on a set of $2n+1$ sampling points
\begin{align} \label{def: 2D sampling points}
    \mathcal{R} = \{ r_i \mid r_i = r_0 + i h,\ r_0\in\mathbb{R}, i\in\mathcal{S}(n)\}, \quad \mathcal{S}(n) = \{-n, -n+1, \ldots, n\},
\end{align}
with $h$ being a sufficiently small mesh size. Restricting the action of the infinite-dimensional operators to the sampled indices gives
\begin{subequations} \label{total: finite matrix derive}
    \begin{align}
        &(\mathcal{A}\widehat{\boldsymbol{u}} )|_{\mathcal{S}(n)}
        =
        \mathcal{A}_{\{\mathcal{S}(n),\vcenter{\hbox{$\cdot$}} \}}\widehat{\boldsymbol{u}}|_{\mathcal{S}(n+1)}
        =\widehat{A}\widehat{\boldsymbol{u}}|_{\mathcal{S}(n+1)} \equiv \widehat{A}\widehat{\u}, \label{dv: matrixA}\\
        &(\mathcal{B}\widehat{\boldsymbol{u}} )|_{\mathcal{S}(n)}
        = \mathcal{B}_{\{\mathcal{S}(n),\vcenter{\hbox{$\cdot$}} \}}\widehat{\boldsymbol{u}}|_{\mathcal{S}(n)}
        =\widehat{B}\widehat{\boldsymbol{u}}|_{\mathcal{S}(n)}\equiv \widehat{B}\widehat{\u}_{\mathrm{o}},\label{dv: matrixB}
    \end{align}
\end{subequations}
where
\begin{equation} \label{df: truncted matrices of 2D}
    \widehat{A} = \mathcal{A}_{\{\mathcal{S}(n), \mathcal{S}(n+1)\}} \in \mathbb{R}^{(2n+1) \times (2n+3)},
\quad
\widehat{B} = \mathcal{B}_{\{\mathcal{S}(n), \mathcal{S}(n)\}}
\end{equation}
are the cropped tridiagonal and diagonal matrices extracted from $\mathcal{A}$ and $\mathcal{B}$, respectively, and
\begin{equation} \label{df: discretized vector}
     \widehat{\mathbf{u}} = [\widehat{u}_{-n-1}, \ldots, \widehat{u}_{n+1}]^\mathsf{T}, \quad \widehat{\mathbf{u}}_{\mathrm{o}} = [\widehat{u}_{-n}, \ldots, \widehat{u}_{n}]^\mathsf{T}, \quad \widehat{u}_i = \widehat{u}(r_i) , \quad \varepsilon_i = \varepsilon(r_i) .
\end{equation}

The pointwise RQs are then defined by the componentwise ratio between $\widehat{A}\widehat{\mathbf{u}}$ and $\widehat{B}\widehat{\mathbf{u}}_{\mathrm{o}}$. Their weighted expectation is used as a consistency-based estimator for the reconstructed eigenfunction in the original 1D discrete system:
\begin{equation} \label{def: expectation of ratio}
\widehat{\lambda}
\equiv
\mathbb{E}_{\mathrm{p}} \left[
\left(\widehat{A}\widehat{\mathbf{u}}\right)\oslash\left(\widehat{B}\widehat{\mathbf{u}}_{\mathrm{o}}\right)
\right] = \mathbb{E}_{\mathrm{p}}\left[\Bigg\{\frac{\left(\widehat{A}\widehat{\u}\right)_i}{\left(\widehat{B}\widehat{\u}_{\mathrm{o}}\right)_i} \Bigg\}_{i=0}^{\pm n}\right]
\approx \widetilde{\lambda},
\end{equation}
where $\oslash\,$ denotes Hadamard quotient and $\mathrm{p} = \{p_i = \frac{\varepsilon_i \widehat{u}_i^2}{\widehat{\mathbf{u}}_{\mathrm{o}}^\mathsf{T} \widehat{B}\widehat{\mathbf{u}}_{\mathrm{o}}}\}_{i=0}^{\pm n}$ is the weight associated with the sampled points.


This section has reformulated formulated the 1D quasiperiodic HEVP as a 2D periodic embedded problem and derived its Fourier spectral discretization. The computed
eigenfunction is then reconstructed on the physical line, where the pointwise RQ provides an eigenvalue estimator for the original low-dimensional problem. The theoretical justification of this estimator, including its consistency and stability, will be presented in Section~\ref{section: Error analysis}.

\section{2D HEVPs via Associated 4D Embedded EVPs} \label{section: 2Dto4D}

Building on the 1D-to-2D construction in the previous section, we now extend the projection framework to the 2D-to-4D setting. In this section we consider the 2D TM-mode in \eqref{evp: TM mode} with $\mu_{\bot}(\r) \equiv 1$

\begin{equation} \label{evp: 2D TM mode}
- \nabla\!\cdot\! \nabla u(\mathbf{r})
= \lambda \, \varepsilon(\mathbf{r}) u(\mathbf{r}),
\quad
\mathbf{r} = (r_1, r_2)^\mathsf{T} \in \mathbb{R}^2,
\end{equation}
where $u(\mathbf{r}) = E_z(\r)$ is a scalar field, $\varepsilon(\mathbf{r}) = \varepsilon_z(\r)$ is a quasiperiodic function and $\lambda $ is the corresponding eigenvalue. As in \eqref{total: discrete form of 2D}, we can discrete \eqref{evp: 2D TM mode} by FD method and derive an infinite-dimensional eigenvalue problem
\begin{equation} \label{df: 2D TMmode}
\mathcal{A}^{(2)} \boldsymbol{u}^{(2)} = \lambda \mathcal{B}^{(2)} \boldsymbol{u}^{(2)}.
\end{equation}
The operators $\mathcal{A}^{(2)}$ and $\mathcal{B}^{(2)}\,$are given by
\begin{subequations} \label{total: discrete infinite 2D}
    \begin{equation} \label{df: 2D diff scheme}
    \mathcal{A}^{(2)} = \mathcal{I} \otimes \mathcal{A} + \mathcal{A} \otimes \mathcal{I},
    \quad
    \mathcal{B}^{(2)} = \mathrm{blkdiag}(\dots, \boldsymbol{\varepsilon}_{-1}, \boldsymbol{\varepsilon}_{0}, \boldsymbol{\varepsilon}_{1}, \dots),
    \end{equation}
    in which $\mathcal{A}$ is given in \eqref{df: infinite matrix}, $\mathcal{I}$ denotes the identity operator,  and `blkdiag' denotes a block diagonal matrix. Each block is define as $\boldsymbol{\varepsilon}_i = \text{diag}(\ldots, \varepsilon_{i,-1}, \varepsilon_{i,0}, \varepsilon_{i,1}, \ldots)$ and $ \varepsilon_{i,j} = \varepsilon(\mathbf{r}_{i,j}), \,i,j = 0, \pm 1, \cdots \pm \infty,$ with $\mathbf{r}_{i,j}$ being the mesh points. The vectorized unknown is
    \begin{equation} \label{set: 2D infinite discrete vector}
    \boldsymbol{u}^{(2)} = [\ldots, \boldsymbol{u}_{-1}^\mathsf{T}, \boldsymbol{u}_{0}^\mathsf{T}, \boldsymbol{u}_{1}^\mathsf{T}, \dots]^\mathsf{T} \quad \text{with} \quad \boldsymbol{u}_i = [\dots, u_{i,-1}, u_{i,0}, u_{i,1}, \ldots]^\mathsf{T},
    \end{equation}
\end{subequations}
    for $i = 0, \pm1, \ldots, \pm \infty.$

In the following, we employ the projection method to embed \eqref{evp: 2D TM mode} into a 4D eigenvalue problem. Since $\varepsilon(\mathbf{r})$ in \eqref{evp: 2D TM mode} is quasiperiodic, it can, in general, be embedded into a 4-torus $\mathcal{T}^4$ with period $\mathbf{T} = (T_1, \dots, T_4)$ via a projection matrix

\begin{subequations} \label{total: 4D projected matrices}
        \begin{equation} \label{def: 4D projected matrix}
    P = \begin{bmatrix}
        p_{11} & p_{12} & p_{13} & p_{14} \\
        p_{21} & p_{22} & p_{23} & p_{24}
    \end{bmatrix}^{\mathsf{T}} \in \mathbb{R}^{4 \times 2},
    \end{equation}
   where the rows of $P$, denoted by $\mathbf{p}_j=(p_{j1},p_{j2})$ for $j=1,\ldots,4$, are assumed to be rationally independent, as in Section~\ref{section: 1Dto2D}. The projected coordinate is
    \begin{equation} \label{eq: periodic projection variation}
    \mathbf{x} = P \mathbf{r} \in \mathcal{T}^4,
    \quad
    \mathcal{E}(\mathbf{x} + \mathbf{T}) = \mathcal{E}(\mathbf{x}) = \mathcal{E}(P\mathbf{r}) =
    \varepsilon(\mathbf{r}) .
    \end{equation}
\end{subequations}
Let the lifted function $U(\mathbf{x})$ satisfy
\begin{equation} \label{eq: projected relation}
U(\mathbf{x}) = U(P\mathbf{r}) = u(\mathbf{r}),
\quad \mathbf{r} = (r_1, r_2)^\mathsf{T} \in \mathbb{R}^2.
\end{equation}
Applying the chain rule yields
\begin{align} \label{eq: projected operator 4D}
\nabla_{\mathbf{r}}\!\cdot\! \nabla_{\mathbf{r}} U(\mathbf{x})
&= \nabla_{\mathbf{r}} \!\cdot\! \left( \nabla_P U(\mathbf{x}) \right)
= \nabla_{\mathbf{r}} \!\cdot\! \left( (P^\mathsf{T} \nabla_{\mathbf{x}}) U(\mathbf{x}) \right) \notag \\
&= (\nabla_{\mathbf{x}}^\mathsf{T} P)(P^\mathsf{T} \nabla_{\mathbf{x}}) U(\mathbf{x})
\equiv \nabla_P^2 U(\mathbf{x}).
\end{align}
Combining \eqref{eq: projected operator 4D} with \eqref{total: 4D projected matrices}, the original problem \eqref{evp: 2D TM mode} can be reformulated as the 4D embedded eigenvalue problem
\begin{equation} \label{evp: 2Dto4D continous projection}
- \nabla_P^2 U(\mathbf{x})
= \widetilde{\lambda} \, \mathcal{E}(\mathbf{x}) U(\mathbf{x}),
\quad \mathbf{x} \in \mathcal{T}^4.
\end{equation}

We now utilize the Fourier pseudospectral method \cite{Orszag1972Comparison} to discrete \eqref{evp: 2Dto4D continous projection}. Extending the construction in Section~\ref{section: 1Dto2D}, we introduce the reciprocal index set
\begin{align} \label{set: 4D reciprocal index}
\mathcal{J} = \left\{ \left(\frac{2\pi}{T_1}i,\frac{2\pi}{T_2}j,\frac{2\pi}{T_3}k,\frac{2\pi}{T_4}\ell\right) \mid i,j,k,\ell \in \left\{-N, \ldots, N-1\right\}  \right\},
\end{align}
with $\#\mathcal{J} = (2N)^4$. Letting $\mathcal{J}_\v = \mathrm{vec}(\mathcal{J})$, for a given $\k \in \mathbb{R}^4$ we obtain the GEVP corresponding to \eqref{evp: 2Dto4D continous projection}
\begin{subequations} \label{total: 4D spectral}
    \begin{equation} \label{df: 4D spectral}
K U = \widetilde{\lambda} M U,
\quad \text{with} \,\, U = (U_{\boldsymbol{\xi}})_{\boldsymbol{\xi} \in \mathcal{J}_\v},
\end{equation}
where $K$ is diagonal and $M$ is a block Toeplitz matrix with
\begin{equation}
    K_{\boldsymbol{\xi}\boldsymbol{\xi}} =
(\mathbf{k} + \boldsymbol{\xi})^\mathsf{T} P P^\mathsf{T} (\mathbf{k} + \boldsymbol{\xi}), \quad M_{\boldsymbol{\xi}\boldsymbol{\xi}^\prime} = \mathcal{E}_{\boldsymbol{\xi} - \boldsymbol{\xi}^\prime},
\end{equation}
\end{subequations}
 for ${\boldsymbol{\xi}, \boldsymbol{\xi}^\prime} \in \mathcal{J}_\v.$

The computed eigenfunction of \eqref{total: 4D spectral} can be projected back to the 2D physical domain as

\begin{equation} \label{sol: 4D projected function}
\widehat{u}(\mathbf{r})
= U(P\mathbf{r})
= \sum_{\boldsymbol{\xi} \in \mathcal{J}_\v}
U_{\boldsymbol{\xi}}
e^{\imath (P^\mathsf{T} (\mathbf{k} + \boldsymbol{\xi})) \cdot \mathbf{r}}.
\end{equation}

To assess the accuracy of $\widetilde{\lambda}$, we employ the 2D FD operator \eqref{total: discrete infinite 2D} and evaluate pointwise RQs on a large sampling set $\mathcal{R}^{(2)} = \{ \mathbf{r}_{ij} \mid \mathbf{r}_{ij} = \mathbf{r}_{00} + [ih, jh]^\mathsf{T},\, \r_{00}\in\mathbb{R}^2, \, (i,j) \in \mathcal{S}(n) \times \mathcal{S}(n)\},$ where $ \mathcal{S}(n) $ is defined in \eqref{def: 2D sampling points}. Denoting the sampled indices by $\mathcal{S}^{(2)}(n) = \text{vec}( \mathcal{S}(n) \times \mathcal{S}(n))$ with `vec' representing the vectorization of a 2D index but through $\pm \infty$, we have

\begin{subequations} \label{total: finite matrix derive 4D}
    \begin{align}
        (\mathcal{A}^{(2)}\widehat{\boldsymbol{u}}^{(2)} )|_{\mathcal{S}^{(2)}(n)}
        =&
        \mathcal{A}^{(2)}_{\{\mathcal{S}^{(2)}(n),\cdot\}}\widehat{\boldsymbol{u}}^{(2)}|_{\mathcal{S}^{(2)}(n+1)}
        =\widehat{A}^{(2)}\widehat{\boldsymbol{u}}^{(2)}|_{\mathcal{S}^{(2)}(n+1)} \equiv \widehat{A}^{(2)}\widehat{\u}^{(2)}, \label{dv: matrixA 4D}\\
        (\mathcal{B}^{(2)}\widehat{\boldsymbol{u}}^{(2)} )|_{\mathcal{S}^{(2)}(n)}
        =& \mathcal{B}^{(2)}_{\{\mathcal{S}^{(2)}(n),\cdot\}}\widehat{\boldsymbol{u}}^{(2)}|_{\mathcal{S}^{(2)}(n)}
        =\widehat{B}^{(2)}\widehat{\boldsymbol{u}}^{(2)}|_{\mathcal{S}^{(2)}(n)}\equiv \widehat{B}^{(2)}\widehat{\u}_{\mathrm{o}}^{(2)},\label{dv: matrixB 4D}
    \end{align}
\end{subequations}
where
\begin{equation} \label{df: matrices of 4D}
    \widehat{A}^{(2)} = \mathcal{A}^{(2)}_{\{\mathcal{S}^{(2)}(n), \mathcal{S}^{(2)}(n+1)\}} \in \mathbb{R}^{(2n+1)^2 \times (2n+3)^2},
\quad
\widehat{B} = \mathcal{B}^{(2)}_{\{\mathcal{S}^{(2)}(n),\mathcal{S}^{(2)}(n)\}}
\end{equation}
are cropped from $\mathcal{A}^{(2)}$ and $\mathcal{B}^{(2)}$, respectively.

The approximate eigenvalue for 2D HEVP \eqref{evp: 2D TM mode} is estimated as the expectation of the Hadamard quotient of vectors $\widehat{A}^{(2)} \widehat{\mathbf{u}}^{(2)}$ and $\widehat{B}^{(2)} \widehat{\mathbf{u}}^{(2)}_{\mathrm{o}}$

\begin{equation} \label{def: dot rate of 2D}
\widehat{\lambda}
\equiv
\mathbb{E}_{\mathrm{p}} \left[
\left(\widehat{A}^{(2)}\widehat{\mathbf{u}}^{(2)}\right)\oslash\left(\widehat{B}^{(2)}\widehat{\mathbf{u}}^{(2)}_{\mathrm{o}}\right)
\right]
=
\mathbb{E}_{\mathrm{p}}
\left[
\{r_{s}\}_{s\in\mathcal{S}^{(2)}(n)}
\right]
\approx \widetilde{\lambda},
\end{equation}
where $\mathrm{p} = \left\{ p_{s} = \frac{\varepsilon_{s} \widehat{u}_{s}^2}{\widehat{\mathbf{u}}_{\mathrm{o}}^{(2)\mathsf{T}} \widehat{B}^{(2)} \widehat{\mathbf{u}}^{(2)}_{\mathrm{o}}} \right\}_{s\in\mathcal{S}^{(2)}(n)}$ with $\varepsilon_{s} = \widehat{B}^{(2)}_{\{s,s\}}$ and $ r_{s} = \left(\widehat{A}^{(2)} \widehat{\u}^{(2)}\right)_{s} \Bigg/\left(\widehat{B}^{(2)} \widehat{\mathbf{u}}^{(2)}_{\mathrm{o}}\right)_{s} $.  An error analysis for pointwise RQs $\{r_{s}\} $ can be found in the next section.

\cref{alg: projected spectral method} summarizes the proposed projection-based spectral framework using the representative 2D-to-4D quasiperiodic embedding. We first solve the embedded high-dimensional eigenvalue problem by a Fourier-pseudospectral discretization \eqref{df: 4D spectral}, and then use the computed eigenfunction $U$ to reconstruct the low-dimensional field $\widehat{u}$. The pointwise RQ estimator $\widehat{\lambda}$ is subsequently computed from $\widehat u$ on the low-dimensional FD grid as in \eqref{def: dot rate of 2D}.

In practical computation, we exploit the matrix structure resulting from the Fourier-pseudospectral discretization to accelerate the solution of the GEVP \eqref{df: 4D spectral}. The stiffness matrix $K$ is diagonal and is assembled explicitly in sparse form, while the mass matrix $M$ has a convolution structure and is applied in a matrix-free manner using FFTs. This reduces each application of $M$ from the $\mathcal{O} (N_F^2)$ cost of a dense matrix-vector multiplication to $\mathcal{O} (N_F\log N_F)$, where $N_F=\#\mathcal{J}$ is the total number of  Fourier bases. Since $\mathcal{E}(\mathbf{x})>0$, $M$ is symmetric positive definite. To compute the smallest positive eigenvalues of $KU=\widetilde{\lambda}MU$, we solve the inverse generalized problem $MU=\mu KU$ with $\mu=\widetilde{\lambda}^{-1}$ and extract the largest eigenvalues $\mu$ using a matrix-free iterative eigensolver.

The additional cost of computing the pointwise RQ estimator \eqref{def: dot rate of 2D} is negligible compared with the cost of the eigensolver. Although a sufficiently large number of physical sampling points $n$ is used to make the weighted expectation numerically stable, the evaluation of the pointwise RQs only requires the action of the cropped low-dimensional FD operator, together with componentwise products, quotients, and summations on the sampled vectors. Therefore, compared with the iterative solution of the embedded high-dimensional eigenvalue problem, the cost of obtaining $\widehat{\lambda}$ is very small. In this sense, the pointwise RQ procedure is a cheap but valuable  step for obtaining a more appropriate low-dimensional eigenvalue for the original quasiperiodic system.

\begin{algorithm}[htbp]
\caption{Projected Fourier-pseudospectral method for quasiperiodic HEVPs}
\label{alg: projected spectral method}
\begin{algorithmic}[1]

\Require Quasiperiodic coefficient $\varepsilon(\mathbf r)$, projection matrix
$P$, periodic lifted coefficient $\mathcal E(\mathbf x)$, embedded Bloch wave
vector $\mathbf k$, Fourier number $N$, and physical validation mesh size
$h$.

\Ensure Reconstructed low-dimensional eigenfunction $\widehat u$ and
low-dimensional RQ eigenvalue estimator $\widehat{\lambda}$.

\State Determine the high-dimensional periodic torus $\mathcal T^4$ and the
projection relation according to \eqref{eq: periodic projection variation}: $\mathbf x=P\mathbf r,\,\, \varepsilon(\mathbf r)=\mathcal E(P\mathbf r).$

\State Construct the truncated Fourier index set $\mathcal J$ as in \eqref{set: 4D reciprocal index}.

\State Represent the embedded field in the Fourier form as in \eqref{eq: field fourier span}:
$U(\mathbf x) = \sum_{\boldsymbol{\xi}\in\mathcal J}
U_{\boldsymbol{\xi}}
e^{\imath(\mathbf k+\boldsymbol{\xi})\cdot\mathbf x}$ .

\State Assemble the Fourier-spectral GEVP
derived in \eqref{df: 4D spectral}:
$$
KU=\widetilde{\lambda}MU,
$$
where $ K_{\boldsymbol{\xi}\boldsymbol{\xi}} = (\mathbf k+\boldsymbol{\xi})^{\mathsf T} PP^{\mathsf T} (\mathbf k+\boldsymbol{\xi}), \,\, M_{\boldsymbol{\xi}\boldsymbol{\xi}'} = \mathcal E_{\boldsymbol{\xi}-\boldsymbol{\xi}^{\prime}}.
$

\State Solve the finite-dimensional embedded GEVP to obtain an intermediate eigenpair $(\widetilde{\lambda},U)$.

\State Reconstruct the low-dimensional field by the projection formula
\eqref{sol: 4D projected function}:
$$
\widehat u(\mathbf r)
=
U(P\mathbf r)
=
\sum_{\boldsymbol{\xi}\in\mathcal J}
U_{\boldsymbol{\xi}}
e^{\imath P^{\mathsf T}(\mathbf k+\boldsymbol{\xi})\cdot\mathbf r}.
$$

\State Sample $\widehat u$ on the low-dimensional physical grid with mesh size
$h$, and compute the pointwise RQs using the FD validation
operators:
$
r_s
=
\frac{(\widehat A^{(2)}\widehat{\mathbf u}^{(2)})_s}
{(\widehat B^{(2)}\widehat{\mathbf u}^{(2)}_{\mathrm{o}})_s}.
$

\State Compute the weighted RQ estimator according to \eqref{def: dot rate of 2D}: $\widehat{\lambda}
=
\mathbb E_{\mathrm p}[\{r_s\}]$ .

\State Interpret $(\widehat{\lambda},\widehat u)$ as the projected
low-dimensional eigenpair associated with the original quasiperiodic HEVP.

\end{algorithmic}
\end{algorithm}


\section{Error and Spectrum Analysis for Pointwise  RQs} \label{section: Error analysis}

In this section, we analyze the error and stability of the pointwise RQs associated with the reconstructed vector $\widehat{\mathbf{u}}=[\widehat{u}_{-n},\ldots,\widehat{u}_{n}]^{\mathsf{T}}$, which is obtained from the spectral projection formula \eqref{sol: spectral projection} for the 1D HEVP via the 2D embedded method described in Section~\ref{section: 1Dto2D}. The corresponding analysis for the 2D HEVP obtained from the 4D embedded method in Section~\ref{section: 2Dto4D} can be derived analogously.

For notational simplicity, we suppress the hat notation for the reconstructed solution $\widehat{\boldsymbol{u}}$ in \eqref{sol: spectral projection}  and write $\boldsymbol{u} = [\dots, u_{-1}, u_0, u_1, \dots]^\mathsf{T}$ as an infinite vector. Suppose that $ 0 < \varepsilon_{\min} \le \varepsilon_i \le \varepsilon_{\max},$ for $i = 0, \pm1, \cdots \pm \infty,$ and $\boldsymbol{u}^\mathsf{T} \mathcal{B} \boldsymbol{u} = \sum_{i=0}^{\pm \infty} \varepsilon_i u_i^2 < \infty.$ For the infinite-dimensional discrete problem \eqref{total: discrete form of 2D}, we define the RQ $\rho$ associated with $\boldsymbol{u}$ by

\begin{equation} \label{def: Rayleigh quotient}
\rho = \rho(\boldsymbol{u}) = \frac{\boldsymbol{u}^\mathsf{T} \mathcal{A} \boldsymbol{u}}{\boldsymbol{u}^\mathsf{T} \mathcal{B} \boldsymbol{u}}.
\end{equation}
Based on  \eqref{df: truncted matrices of 2D} and \eqref{df: discretized vector}, for each index satisfying $\varepsilon_i u_i\neq0$, we define the pointwise RQ by
\begin{equation} \label{eq: pointwise ratio}
r_i = \frac{(\widehat{A}\u)_i}{(\widehat{B}\u_{\mathrm{o}})_i}
= \frac{(\mathcal{A}\boldsymbol{u})_i}{(\mathcal{B}\boldsymbol{u})_i},
\quad i = -n, \ldots, n.
\end{equation}

\begin{thm} \label{thm: Hadamard quotient}
    Assume that $\boldsymbol{u}^\mathsf{T} \mathcal{B} \boldsymbol{u} = \sum_{i=0}^{\pm\infty} \varepsilon_i u_i^2 < \infty$, $\varepsilon_i u_i\neq0$, then the RQ $\rho \equiv \rho(\boldsymbol{u})$ can be written as the weighted average of the pointwise RQs $\{r_i\}_{i=0}^{\pm \infty}$ with probability weights

\begin{equation} \label{set: probability density}
 \mathrm{p} = \left\{ p_i = \frac{\varepsilon_{i} u_{i}^2}{\boldsymbol{u}^{\mathsf{T}} \mathcal{B} \boldsymbol{u}} \right\}_{i=0}^{\pm \infty}, \quad \text{i.e.} \quad \rho(\boldsymbol{u}) = \sum_{i=0}^{\pm \infty} p_i r_i = \mathbb{E}_{\mathrm{p}}\left[\{r_i\}_{i=0}^{\pm \infty}\right].
\end{equation}
\end{thm}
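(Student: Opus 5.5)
The identity is essentially algebraic: multiplying each pointwise RQ by its weight cancels the denominator. For every index $i$ with $\varepsilon_i u_i \neq 0$, the definition \eqref{eq: pointwise ratio} gives
\[
p_i r_i = \frac{\varepsilon_i u_i^2}{\boldsymbol{u}^{\mathsf T}\mathcal{B}\boldsymbol{u}}\cdot\frac{(\mathcal{A}\boldsymbol{u})_i}{\varepsilon_i u_i} = \frac{u_i(\mathcal{A}\boldsymbol{u})_i}{\boldsymbol{u}^{\mathsf T}\mathcal{B}\boldsymbol{u}}.
\]
Here I use $(\mathcal{B}\boldsymbol{u})_i=\varepsilon_i u_i$, since $\mathcal{B}$ is diagonal by \eqref{df: infinite matrix}. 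Summing over $i$ then gives
\[
\sum_i p_i r_i = \frac{\sum_i u_i(\mathcal{A}\boldsymbol{u})_i}{\boldsymbol{u}^{\mathsf T}\mathcal{B}\boldsymbol{u}} = \frac{\boldsymbol{u}^{\mathsf T}\mathcal{A}\boldsymbol{u}}{\boldsymbol{u}^{\mathsf T}\mathcal{B}\boldsymbol{u}} = \rho(\boldsymbol{u}),
\]
which is \eqref{set: probability density}. I would also check that $\mathrm{p}$ is a genuine probability distribution: $p_i>0$ because $\varepsilon_i\ge\varepsilon_{\min}>0$ and $u_i\neq 0$, and $\sum_i p_i=1$ by the definition of $\boldsymbol{u}^{\mathsf T}\mathcal{B}\boldsymbol{u}$. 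Hence the sum is an expectation $\mathbb{E}_{\mathrm p}$.

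The step needing care is justifying the infinite sums, since the index set is all of $\mathbb{Z}$.

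First, I would show that $\mathcal{A}$ is a bounded operator on $\ell^2$, with $\|\mathcal{A}\|\le 4/h^2$ by the tridiagonal $(-1,2,-1)$ structure. The hypothesis $\sum_i\varepsilon_i u_i^2<\infty$ together with $\varepsilon_i\ge\varepsilon_{\min}$ gives $\boldsymbol{u}\in\ell^2$. Therefore $\mathcal{A}\boldsymbol{u}\in\ell^2$, and $\sum_i u_i(\mathcal{A}\boldsymbol{u})_i$ converges absolutely by Cauchy--Schwarz. That sum is by definition $\boldsymbol{u}^{\mathsf T}\mathcal{A}\boldsymbol{u}$, so $\rho$ in \eqref{def: Rayleigh quotient} is well defined.

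Second, absolute convergence means $\sum_i p_i r_i$ converges absolutely as well, so no ordering of the two-sided series matters. Alternatively, I could write $\boldsymbol{u}^{\mathsf T}\mathcal{A}\boldsymbol{u}=h^{-2}\sum_i(u_{i+1}-u_i)^2$ via summation by parts, but that is not needed for the identity itself.

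If one wanted the finite-window version \eqref{def: expectation of ratio}, the same computation on $\mathcal{S}(n)$ gives $\sum_{|i|\le n}p_ir_i = \widehat{\mathbf u}_{\mathrm o}^{\mathsf T}\widehat A\widehat{\mathbf u}/\widehat{\mathbf u}_{\mathrm o}^{\mathsf T}\widehat B\widehat{\mathbf u}_{\mathrm o}$. That version differs from $\rho$ only by boundary and tail terms, which tend to zero as $n\to\infty$, but it is beyond the present statement.
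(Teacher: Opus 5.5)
Your proposal is correct and is essentially the paper's argument: the paper likewise writes $\boldsymbol{u}^{\mathsf T}\mathcal{A}\boldsymbol{u}=\sum_i u_i(\mathcal{B}\boldsymbol{u})_i r_i$, divides by $\boldsymbol{u}^{\mathsf T}\mathcal{B}\boldsymbol{u}$, and uses $(\mathcal{B}\boldsymbol{u})_i=\varepsilon_i u_i$ to identify the weights $p_i$. Your extra step justifies the infinite sums, which the paper leaves implicit: $\mathcal{A}$ is bounded on $\ell^2$, $\boldsymbol{u}\in\ell^2$ because $\varepsilon_i\ge\varepsilon_{\min}>0$, and hence the two-sided series converges absolutely.
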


\begin{proof}
    Using the definition of $r_i$ in \eqref{eq: pointwise ratio}, we obtain
\begin{align} \label{eq: RQ derive part}
\boldsymbol{u}^\mathsf{T} \mathcal{A} \boldsymbol{u} = \sum_{i=0}^{\pm \infty} u_i (\mathcal{B}\boldsymbol{u})_i \frac{(\mathcal{A}\boldsymbol{u})_i}{(\mathcal{B}\boldsymbol{u})_i} =  \sum_{i=0}^{\pm\infty} u_i (\mathcal{B}\boldsymbol{u})_i r_i.
\end{align}
 Substituting this identity into the definition of $\rho$ in \eqref{def: Rayleigh quotient}, we obtain
\begin{align} \label{eq: RQ derive all}
\rho &=  \frac{\boldsymbol{u}^\mathsf{T} \mathcal{A} \boldsymbol{u}}{\boldsymbol{u}^\mathsf{T} \mathcal{B}\boldsymbol{u}} = \sum_{i=0}^{\pm \infty}
\left(
\frac{u_i (\mathcal{B}\boldsymbol{u})_i}{\boldsymbol{u}^\mathsf{T} \mathcal{B} \boldsymbol{u}}
\right) r_i\notag = \sum_{i=0}^{\pm \infty}
\left(
\frac{\varepsilon_i u_i^2 }{\boldsymbol{u}^\mathsf{T} \mathcal{B}\boldsymbol{u}}
\right) r_i
= \sum_{i=0}^{\pm \infty}
p_i r_i
= \mathbb{E}_{\mathrm{p}}[\{r_i\}_{i=0}^{\pm \infty}],
\end{align}
which proves the claim.
\end{proof}

We next examine the effect of a residual perturbation on the pointwise
quotients. Suppose that $(\lambda, \boldsymbol{u})$ is an approximate eigenpair of the infinite-dimensional discrete problem \eqref{total: discrete form of 2D}. We consider the perturbation equation
\begin{equation} \label{eq: error analysis}
\mathcal{A}\boldsymbol{u} = \lambda \mathcal{B}\boldsymbol{u} + \mathbf{e},
\end{equation}
where $\, \mathbf{e} = \{e_i\}_{i = 0}^{\pm \infty}$ and $e_i$ is the error between $(\mathcal{A}\boldsymbol{u})_i$ and $\lambda (\mathcal{B}\boldsymbol{u})_i$. Then, for each index with $\varepsilon_i u_i\neq0$, \eqref{eq: pointwise ratio} gives
\begin{equation} \label{eq: pointwise error term}
r_i = \lambda + \frac{e_i}{\varepsilon_i u_i},
\quad
\mathbb{E}_{\mathrm{p}}[r_i] = \lambda + \mathbb{E}_{\mathrm{p}}\left[\frac{e_i}{\varepsilon_i u_i}\right].
\end{equation}
This identity shows that the local quotient $r_i$ may become sensitive at points where $|u_i|$ is small, since $\left| r_i-\lambda \right| \leq \frac{|e_i|}{\varepsilon_{\min}|u_i|}$. Thus, small values of $|u_i|$ can amplify the local residual and produce fluctuations in the pointwise quotients. Fortunately, the contribution of such points to the weighted average is moderated by the factor $p_i\propto \varepsilon_i u_i^2$. Therefore, the weighted expectation provides a more stable eigenvalue indicator than the unweighted pointwise quotients, especially when the reconstructed eigenfunction contains small-amplitude regions.

The following Theorem shows that the expectation of the error $\left\{\frac{e_i}{\varepsilon_i u_i}\right\}_{i=0}^{\pm \infty}$ can not be arbitrarily amplified when $|u_i|$ is sufficiently small.

\begin{thm} \label{thm: ratio expectation}
    (i) Assume that $e_{\max} := \max \{|e_i|\}_{i=0}^{\pm \infty} < \infty$ and that there exist constants $C_0>0$, $\alpha>1$, and $N_0\in\mathbb{N}$ such that
    $$|u_i| \le \frac{C_0}{|i|^\alpha}, \; |i|\ge N_0.$$
    Then there exists a constant $C>0$, independent of the residual vector $\mathbf e$, such that
    \begin{equation} \label{neq: expectation estimate}
    \mathbb{E}_{\mathrm{p}}\left[\left\{ \left| \frac{e_i}{\varepsilon_i u_i} \right|\right\}_{i=0}^{\pm \infty}\right]
    \le C e_{\max},
    \end{equation}
    where the probability weights $\mathrm p$ are defined in \eqref{set: probability density}.

    (ii)Consequently,
    \begin{equation} \label{neq: expectation boundary}
    \left| \mathbb{E}_{\mathrm{p}}[r_i] - \lambda \right|
    \le
    \mathbb{E}_{\mathrm{p}}\left[\left| \frac{e_i}{\varepsilon_i u_i} \right|\right]
    \le C e_{\max}.
    \end{equation}

\end{thm}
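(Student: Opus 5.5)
The plan is to compute the weighted expectation explicitly and watch the singular factor $1/(\varepsilon_i u_i)$ cancel against the weight. Since $p_i=\varepsilon_i u_i^2/(\boldsymbol{u}^\mathsf{T}\mathcal{B}\boldsymbol{u})$, each term simplifies as
\begin{equation}
p_i\left|\frac{e_i}{\varepsilon_i u_i}\right| = \frac{|u_i|\,|e_i|}{\boldsymbol{u}^\mathsf{T}\mathcal{B}\boldsymbol{u}} .
\end{equation}
Summing over all $i$ and using $|e_i|\le e_{\max}$ gives
\begin{equation}
\mathbb{E}_{\mathrm p}\left[\left\{\left|\frac{e_i}{\varepsilon_i u_i}\right|\right\}\right] \le \frac{\|\boldsymbol{u}\|_{\ell^1}}{\boldsymbol{u}^\mathsf{T}\mathcal{B}\boldsymbol{u}}\, e_{\max}.
\end{equation}
So (i) reduces to two facts: the denominator is a finite positive number, and $\boldsymbol{u}$ lies in $\ell^1$. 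Both depend only on $\boldsymbol{u}$ and $\varepsilon$, so the resulting constant is independent of $\mathbf e$. Positivity and finiteness of the denominator follow from the hypothesis $\sum\varepsilon_i u_i^2<\infty$ together with $\varepsilon_i u_i\neq 0$.

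The second fact is where the decay hypothesis enters, and it is the only real content of the proof. I split the sum at $N_0$. The finitely many indices with $|i|<N_0$ contribute the finite quantity $\sum_{|i|<N_0}|u_i|$. The tail satisfies
\begin{equation}
\sum_{|i|\ge N_0}|u_i| \le 2C_0\sum_{m\ge N_0} m^{-\alpha},
\end{equation}
which is a convergent $p$-series precisely because $\alpha>1$. This identifies the constant as
\begin{equation}
C=\Big(\sum_{|i|<N_0}|u_i|+2C_0\zeta(\alpha)\Big)\Big/\sum_i\varepsilon_i u_i^2 .
\end{equation}
I would also note that the weighted series $\sum p_i r_i$ therefore converges absolutely. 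This justifies treating it as a genuine expectation and permits the termwise rearrangement used in \cref{thm: Hadamard quotient}.

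For (ii), I take the $\mathrm p$-expectation of the identity \eqref{eq: pointwise error term}. Since $\sum p_i=1$, this gives $\mathbb{E}_{\mathrm p}[r_i]-\lambda=\sum p_i\, e_i/(\varepsilon_i u_i)$. The triangle inequality, valid because of the absolute convergence just shown, bounds the modulus by the expectation of the absolute values, and part (i) then finishes the argument.

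The main obstacle is conceptual rather than technical: making sure nothing singular survives. The cancellation $p_i/|u_i|\propto|u_i|$ removes the small-denominator problem entirely. The hypotheses that matter are therefore the $\ell^1$ decay supplied by $\alpha>1$ and $\boldsymbol{u}\neq0$. The lower bound $\varepsilon_{\min}$ is not needed beyond ensuring the weights are well defined.
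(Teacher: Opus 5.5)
Your proposal is correct and follows essentially the same route as the paper's proof. The weight cancels the singular factor, $p_i\,|e_i/(\varepsilon_i u_i)| = |u_i|\,|e_i|/(\boldsymbol{u}^\mathsf{T}\mathcal{B}\boldsymbol{u})$, which reduces (i) to $\boldsymbol{u}\in\ell^1$ via the decay with $\alpha>1$, and (ii) follows from $r_i=\lambda+e_i/(\varepsilon_i u_i)$, $\sum_i p_i=1$, and the triangle inequality — your explicit constant and absolute-convergence remark just make precise what the paper leaves implicit.
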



\begin{proof}
Using the definition of $p_i$, we obtain
\[
\mathbb{E}_{\mathrm{p}}\left[\left| \frac{e_i}{\varepsilon_i u_i} \right|\right]
=
\sum_{i\in\mathbb{Z}}
\frac{\varepsilon_i u_i^2}{\boldsymbol{u}^\mathsf{T}\mathcal{B}\boldsymbol{u}}
\left| \frac{e_i}{\varepsilon_i u_i} \right|
\le
\frac{e_{\max}}{\boldsymbol{u}^\mathsf{T}\mathcal{B}\boldsymbol{u}}
\sum_{i\in\mathbb{Z}} |u_i|.
\]
It therefore suffices to show that $\sum_{i\in\mathbb{Z}} |u_i|<\infty$. This follows from the assumed decay $|u_i|\le C_0 |i|^{-\alpha}$ with $\alpha>1$. Hence
$$
\mathbb{E}_{\mathrm{p}}\left[\left| \frac{e_i}{\varepsilon_i u_i} \right|\right]
\le
C e_{\max}
$$
for some constant $C>0$. The bound \eqref{neq: expectation boundary} then follows from \eqref{eq: pointwise error term} and the triangle inequality.
\end{proof}


We next examine the stability of the pointwise RQ estimator \eqref{eq: pointwise ratio} through the local component ratios of the eigenvector. In particular, the local quotient depends on the neighboring ratios $u_{i+1}/u_i$ and $u_{i-1}/u_i$. From \eqref{eq: pointwise ratio} and Theorem \ref{thm: Hadamard quotient}, we obtain the approximation
\begin{equation} \label{def: ratio estimate}
\rho = \rho(\boldsymbol{u}) \approx r_i
= \frac{(\widehat{A}\u)_i}{\varepsilon_i u_i}
= \frac{1}{h^2}\left(-\frac{1}{\varepsilon_i}\frac{u_{i+1}}{u_i}
+ \frac{2}{\varepsilon_i}
- \frac{1}{\varepsilon_i}\frac{u_{i-1}}{u_i}\right).
\end{equation}
Thus, the behavior of the local ratios provides useful information on the stability of the pointwise RQ estimator. Define
\begin{equation} \label{def: local ratio}
y_i = \frac{u_{i+1}}{u_i},
\quad
\frac{1}{y_{i-1}} = \frac{u_i}{u_{i-1}},
\quad
\boldsymbol{y}_i = \begin{bmatrix}
    u_{i+1} \\ u_i
\end{bmatrix}.
\end{equation}

If the local quotient is close to the RQ $\rho$, then \eqref{def: ratio estimate} leads to the recurrence
\begin{equation} \label{def: iteration}
y_{i} = (2 - h^2\rho \varepsilon_i) - \frac{1}{y_{i-1}}
=: s_i - \frac{1}{y_{i-1}}.
\end{equation}
Equivalently, this scalar recurrence can be written in matrix form as

\begin{equation} \label{def: matrix iteration}
\boldsymbol{y}_i = L_i^{-1} M_i \boldsymbol{y}_{i-1} \quad \text{with} \quad
    M_i = \begin{bmatrix} 1 & 0 \\ 0 & -1 \end{bmatrix}, \quad
    L_i = \begin{bmatrix} -s_i & 1 \\ 1 & 0 \end{bmatrix}.
\end{equation}

To analyze the long-range behavior of this recurrence, we use a symplectic-pair representation of the transfer matrices. We first introduce a very useful lemma for swapping and coalescing two symplectic pairs $(M_i, L_i)$, $(i=1,2)$ of the forms
\begin{equation} \label{def: symplectic pairs}
M_i =
\begin{bmatrix}
A_i & 0 \\
T_i & -I
\end{bmatrix},
\qquad
L_i =
\begin{bmatrix}
- S_i & I \\
A_i^\mathsf{T} & 0
\end{bmatrix}
\quad \text{with } S_i = S_i^\mathsf{T},\; T_i = T_i^\mathsf{T} \in \mathbb{R}^{\ell \times \ell}.
\end{equation}
These pairs satisfy
\[
M_i J M_i^\mathsf{T} = L_i J L_i^\mathsf{T},
\quad \text{with }
J =
\begin{bmatrix}
0 & I \\
- I & 0
\end{bmatrix},
\]
and eigenvalues of $(M_i,L_i)$ form reciprocal pairs.

\begin{lem} \label{lemma: symplectic pairs}
Let $(M_i,L_i)$, $i=1,2$, be two symplectic pairs of the form \eqref{def: symplectic pairs}. Suppose that $\left(T_2 - S_1\right)^{-1}$ exists. Then a swapping and coalescing symplectic pair for $\{(M_i, L_i)\}_{i=1}^2$ can be computed by
\begin{subequations} \label{total: lemma symplectic}
    \begin{align}
(L_2^{-1} M_2)(L_1^{-1} M_1)
&= L_1(\widehat{L}_1^{-1} \widehat{M}_2 ) M_1
= (\widehat{L}_1 L_1)^{-1} (\widehat{M}_2 M_1) \nonumber \\
&=\begin{bmatrix}
- S_{(12)} & I \\
A_{(12)}^\mathsf{T} & 0
\end{bmatrix} ^{-1}
\begin{bmatrix}
A_{(12)} & 0 \\
T_{(12)} & -I
\end{bmatrix},
\end{align}
where
\begin{equation}
\widehat{L}_1 =
\begin{bmatrix}
I & - A_2 (T_2 - S_1)^{-1} \\
0 & A_1^\mathsf{T} (T_2 - S_1)^{-1}
\end{bmatrix},
\qquad
\widehat{M}_2 =
\begin{bmatrix}
A_2 (T_2 - S_1)^{-1} & 0 \\
- A_1^\mathsf{T} (T_2 - S_1)^{-1} & I
\end{bmatrix},
\end{equation}
and
\begin{align}
T_{(12)} &= T_1 - A_1^\mathsf{T} (T_2 - S_1)^{-1} A_1, \\
S_{(12)} &= S_2 + A_2 (T_2 - S_1)^{-1} A_2^\mathsf{T}, \\
A_{(12)} &= A_2 (T_2 - S_1)^{-1} A_1.
\end{align}
\end{subequations}
\end{lem}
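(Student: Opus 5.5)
This is a direct block-matrix verification. I split it into the three equalities in the chain, treating $A_i$, $S_i$, $T_i$ as $\ell\times\ell$ blocks throughout. Write $W:=(T_2-S_1)^{-1}$; it exists by hypothesis.

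\textbf{Step 1: the swap identity.} The middle equality, $L_1(\widehat{L}_1^{-1}\widehat{M}_2)M_1=(\widehat{L}_1L_1)^{-1}(\widehat{M}_2M_1)$, is not an identity of matrix products as written. It should be read as the commutation relation
\[
M_1L_1^{-1}=\widehat{L}_1^{-1}\widehat{M}_2,
\qquad\text{equivalently}\qquad
\widehat{L}_1M_1=\widehat{M}_2L_1 .
\]
Given this, $(L_2^{-1}M_2)(L_1^{-1}M_1)$ becomes, after moving $M_2$ past $L_1^{-1}$, a product of the form $(\widehat{L}_1L_2)^{-1}(\widehat{M}_2M_1)$. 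The subscripts in the displayed formula look slightly garbled, and I will use whichever indexing makes the swap consistent.

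The core task is therefore to prove the swap identity $\widehat{L}_1M_2=\widehat{M}_2L_1$, with $M_2$ in place of $M_1$, by multiplying the $2\times 2$ block matrices. First I compute $\widehat{L}_1M_2$. Its $(1,1)$ block is $A_2-A_2WT_2$, and its $(2,1)$ block is $A_1^{\mathsf T}WT_2$. Next I compute $\widehat{M}_2L_1$. Its $(1,1)$ block is $-A_2WS_1$, and its $(2,1)$ block is $A_1^{\mathsf T}WS_1+A_1^{\mathsf T}$.

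The blocks agree because of the relations
\[
I-WT_2=-WS_1,
\qquad
WT_2=WS_1+I,
\]
both of which follow from $W(T_2-S_1)=I$. The remaining blocks match trivially. I expect this bookkeeping of which pair gets swapped to be the main obstacle, since the printed indices are ambiguous.

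\textbf{Step 2: the coalesced pair.} Multiply out the two factors.

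For the $L$-factor, $\widehat{L}_1L_2$ has
\begin{itemize}
\item $(1,1)$ block: $-S_2-A_2WA_2^{\mathsf T}=-S_{(12)}$;
\item $(1,2)$ block: $I$;
\item $(2,1)$ block: $A_1^{\mathsf T}WA_2^{\mathsf T}=A_{(12)}^{\mathsf T}$, using $W^{\mathsf T}=W$, which holds because $S_1$ and $T_2$ are symmetric;
\item $(2,2)$ block: $0$.
\end{itemize}

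For the $M$-factor, $\widehat{M}_2M_1$ has
\begin{itemize}
\item $(1,1)$ block: $A_2WA_1=A_{(12)}$;
\item $(1,2)$ block: $0$;
\item $(2,1)$ block: $-A_1^{\mathsf T}WA_1+T_1=T_{(12)}$;
\item $(2,2)$ block: $-I$.
\end{itemize}

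This gives exactly the stated forms.

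\textbf{Step 3: the result is again symplectic.} Note that $S_{(12)}$ and $T_{(12)}$ are symmetric, again because $W$ is symmetric. Hence the coalesced pair is once more of the form \eqref{def: symplectic pairs}, which completes the proof.
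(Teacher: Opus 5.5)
Your block computation is correct. It follows the same swap-then-coalesce mechanism as the paper, but by a more explicit route. The paper's proof performs no computation: it appeals by analogy to the doubling transformation in the proof of Thm.~2.2 of \cite{lin2006convergence}. You instead verify the swap identity $\widehat{L}_1M_2=\widehat{M}_2L_1$ block by block, using only $W(T_2-S_1)=I$ with $W=(T_2-S_1)^{-1}$, and then multiply out $\widehat{L}_1L_2$ and $\widehat{M}_2M_1$. Doing it by hand buys three things the citation hides.

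\textbf{The indices.} It shows that the printed chain must read $L_2^{-1}(\widehat{L}_1^{-1}\widehat{M}_2)M_1=(\widehat{L}_1L_2)^{-1}(\widehat{M}_2M_1)$. With $L_1$ as printed, the last equality is false: $\widehat{L}_1L_1$ has $(1,1)$ block $-S_1-A_2WA_1^{\mathsf T}$ and $(2,1)$ block $A_1^{\mathsf T}WA_1^{\mathsf T}$.

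\textbf{Symmetry.} It exposes that the symmetry of $W$, inherited from $S_1$ and $T_2$, is genuinely used. It is needed to identify the $(2,1)$ block of $\widehat{L}_1L_2$ as $A_{(12)}^{\mathsf T}$, and to keep $S_{(12)}$ and $T_{(12)}$ symmetric.

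\textbf{Invertibility.} It makes clear that invertibility of $A_1$ and $A_2$, implicit in writing $L_i^{-1}$, is what justifies passing from $\widehat{L}_1M_2=\widehat{M}_2L_1$ to $M_2L_1^{-1}=\widehat{L}_1^{-1}\widehat{M}_2$. Here $\widehat{L}_1$ is block upper triangular with diagonal blocks $I$ and $A_1^{\mathsf T}W$, and $\widehat{L}_1L_2$ is invertible exactly when $A_{(12)}$ is. You should state this explicitly rather than just writing ``equivalently''.

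One cleanup is needed. Your first displayed relation, $M_1L_1^{-1}=\widehat{L}_1^{-1}\widehat{M}_2$ (equivalently $\widehat{L}_1M_1=\widehat{M}_2L_1$), is false in general: the $(1,1)$ blocks are $A_1-A_2WT_1$ and $-A_2WS_1$. Delete it and state only the $M_2$ version, which is the one you actually verify.
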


\begin{proof}
    In light of the proof of Thm.~2.2 in \cite{lin2006convergence} which performs a doubling transformation to merge the square of the symplectic matrix $ (L_1^{-1} M_1)^2 = \widehat{L}^{-1}\widehat{M},$ we can also perform the swapping and coalescing steps as in \eqref{total: lemma symplectic} to get
$$
L_{(12)} =
\begin{bmatrix}
- S_{(12)} & I \\
A_{(12)}^{\mathsf{T}} & 0
\end{bmatrix},
\qquad
M_{(12)} =
\begin{bmatrix}
A_{(12)} & 0 \\
T_{(12)} & -I
\end{bmatrix}.
$$
\end{proof}

W.L.O.G., the matrix recurrence in \eqref{def: matrix iteration} can be written as the product of the symplectic matrices
$\{L_i^{-1} M_i\}_{i=1}^n$, i.e.,
\begin{equation} \label{eq: matrix recurrence}
\boldsymbol{y}_{n+1}
=
(L_n^{-1}M_n)(L_{n-1}^{-1}M_{n-1})\cdots(L_1^{-1}M_1)\,\boldsymbol{y}_1.
\end{equation}
From Lemma \ref{lemma: symplectic pairs} it follows that \eqref{eq: matrix recurrence} can be swapped and colleasced as a symplectic pair as
\begin{align} \label{dv: symplectic pair swap and collapse}
    \boldsymbol{y}_{n+1} &=
(L_n^{-1}M_n)\cdots(L_{(3,4)}^{-1}M_{(3,4)})(L_{(1,2)}^{-1}M_{(1,2)})\,\boldsymbol{y}_1
=
\big(L_{(12\cdots n)}^{-1} M_{(1 2\cdots n)}\big)\,\boldsymbol{y}_1 \notag\\
&= \begin{pmatrix}
- s_{(1 \cdots n)} & 1 \\
a_{(1 \cdots n)} & 0
\end{pmatrix}^{-1}
\begin{pmatrix}
a_{(1\cdots n)} & 0 \\
t_{(1\cdots n)} & -1
\end{pmatrix}
\,\boldsymbol{y}_1
 \equiv
\begin{pmatrix}
- \widetilde{s}_n & 1\\
\widetilde{a}_n & 0
\end{pmatrix}^{-1}
\begin{pmatrix}
\widetilde{a}_n & 0 \\
\widetilde{t}_n & -1
\end{pmatrix}
\,\boldsymbol{y}_1 \notag \\
& \equiv \widetilde{L}_n^{-1}\widetilde{M}_n\boldsymbol{y}_1.
\end{align}


\begin{thm} \label{thm: expectation of ratio}
    Suppose that $ \boldsymbol{y}_n = [\,u_{n+1},\,u_n\,]^{\mathsf{T}}$ is close to the eigenvector of $\widetilde{L}_n^{-1}\widetilde{M}_n$ corresponding to the smallest eigenvalue in module.  Assume that $\widetilde{a}_n^2 < \eta^2$ is small, $|\widetilde{s}_n|$ and $|\widetilde{t}_n|$ in \eqref{dv: symplectic pair swap and collapse} have the same order. Then the ratio $ \frac{1}{y_n} = \frac{u_n}{u_{n+1}} $ in \eqref{def: local ratio} is of order $\mathcal{O}(1)$.
\end{thm}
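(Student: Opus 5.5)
The plan is to reduce the statement to an explicit $2\times 2$ generalized eigenvalue computation for the coalesced pair $(\widetilde M_n,\widetilde L_n)$ in \eqref{dv: symplectic pair swap and collapse}. By assumption $\boldsymbol{y}_n=[u_{n+1},u_n]^{\mathsf T}$ is (close to) an eigenvector of $\widetilde L_n^{-1}\widetilde M_n$. So the ratio $1/y_n=u_n/u_{n+1}$ is the ratio of the second to the first component of that eigenvector, and we only need to locate that eigenvector.

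First, write $\widetilde M_n\boldsymbol{y}=\mu\widetilde L_n\boldsymbol{y}$ componentwise with $\boldsymbol{y}=[y^{(1)},y^{(2)}]^{\mathsf T}$:
\begin{equation}
\widetilde a_n y^{(1)}=\mu\bigl(-\widetilde s_n y^{(1)}+y^{(2)}\bigr),\qquad \widetilde t_n y^{(1)}-y^{(2)}=\mu\,\widetilde a_n y^{(1)}.
\end{equation}
The second equation gives the eigenvector ratio directly:
\begin{equation}
\frac{y^{(2)}}{y^{(1)}}=\widetilde t_n-\mu\,\widetilde a_n .
\end{equation}
Substituting into the first equation gives the characteristic polynomial
\begin{equation}
\widetilde a_n\mu^2+(\widetilde s_n-\widetilde t_n)\mu+\widetilde a_n=0 .
\end{equation}
Its roots satisfy $\mu_+\mu_-=1$, which matches the reciprocal-pair property of symplectic pairs stated before Lemma~\ref{lemma: symplectic pairs}.

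Second, estimate the root of smallest modulus. Since $\widetilde a_n^2<\eta^2$ is small compared with $(\widetilde s_n-\widetilde t_n)^2$, the discriminant is $(\widetilde s_n-\widetilde t_n)^2-4\widetilde a_n^2>0$. Expanding the square root gives
\begin{equation}
\mu_{\min}=-\frac{\widetilde a_n}{\widetilde s_n-\widetilde t_n}+\mathcal O\!\left(\frac{\widetilde a_n^3}{(\widetilde s_n-\widetilde t_n)^3}\right),
\end{equation}
so $|\mu_{\min}|=\mathcal O(\eta)$. Plugging this into the eigenvector ratio,
\begin{equation}
\frac{1}{y_n}=\frac{u_n}{u_{n+1}}\approx \widetilde t_n+\frac{\widetilde a_n^2}{\widetilde s_n-\widetilde t_n}+\mathcal O(\eta^4).
\end{equation}
By the order assumptions on $\widetilde s_n,\widetilde t_n$ and the smallness of $\widetilde a_n$, the right-hand side is $\widetilde t_n+\mathcal O(\eta^2)$, i.e. of order $\mathcal O(1)$. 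Because $\boldsymbol{y}_n$ is only close to this eigenvector, the conclusion transfers by continuity of the ratio in the direction of $\boldsymbol{y}_n$, as long as that direction stays away from $[0,1]^{\mathsf T}$. This holds because the eigenvector's first component is nonzero.

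The main obstacle is interpreting ``have the same order'' precisely enough to control $\widetilde s_n-\widetilde t_n$ from below. If $\widetilde s_n$ and $\widetilde t_n$ nearly cancel, the expansion of $\mu_{\min}$ breaks down. I would make this explicit by assuming $|\widetilde s_n-\widetilde t_n|\ge c$ and $|\widetilde t_n|\le C$ for constants $c,C$ independent of $n$. I would also note that $\widetilde t_n$ being $\mathcal O(1)$ (rather than arbitrarily large) is what turns the conclusion into ``order one''. With that reading the argument is just the closed-form $2\times 2$ analysis above, and no further use of Lemma~\ref{lemma: symplectic pairs} is needed beyond the coalesced form \eqref{dv: symplectic pair swap and collapse}.
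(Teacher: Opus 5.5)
Your argument is correct in the regime you fix, but it takes a different route from the paper. The paper argues by orders of magnitude in three regimes: $|\widetilde s_n|=|\widetilde t_n|$ of order $1$, $1/\eta$, or $\eta$. In the first it notes that $(1,\widetilde t_n)^{\mathsf T}$ is an approximate null vector of $\widetilde M_n$. It reduces the other two to this case by an equivalence transformation that rescales $\widetilde a_n,\widetilde s_n,\widetilde t_n$. It also computes $\widetilde L_n^{-1}\widetilde M_n[u_1,1]^{\mathsf T}$ explicitly, whose component ratio is $\widetilde s_n+\widetilde a_n^2u_1/(\widetilde t_n u_1-1)$ provided $\widetilde t_n u_1-1$ stays away from zero. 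You instead solve the $2\times 2$ pencil exactly. Your identity $u_n/u_{n+1}=\widetilde t_n-\mu\widetilde a_n$ is sharper than anything in the paper. It makes the heuristic $u_n/u_{n+1}\approx\widetilde t_n$ precise. It also shows that the paper's product formula (ratio $\approx\widetilde s_n$) describes the dominant eigenvector, with $\mu_{\max}\approx-(\widetilde s_n-\widetilde t_n)/\widetilde a_n$, not the small-eigenvalue eigenvector named in the hypothesis. This is harmless in the $\mathcal O(1)$ regime, where both ratios are bounded.

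Two remarks on your added hypotheses. First, you do not need $|\widetilde s_n-\widetilde t_n|\ge c$ or the square-root expansion. The coefficients are real and $\mu_+\mu_-=1$, so $|\mu_{\min}|\le 1$ (both roots lie on the unit circle if complex). Your formula then gives directly $|u_n/u_{n+1}-\widetilde t_n|\le|\widetilde a_n|<\eta$. This also covers the paper's regime $|\widetilde s_n|,|\widetilde t_n|=\mathcal O(\eta)$, where $\widetilde a_n$ is comparable to $\widetilde s_n-\widetilde t_n$ and your expansion fails.

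Second, your assumption $|\widetilde t_n|\le C$ is essential. The same inequality shows that in the paper's regime $|\widetilde s_n|,|\widetilde t_n|=\mathcal O(1/\eta)$ one has $u_n/u_{n+1}=\widetilde t_n+\mathcal O(\eta)$, which is of order $1/\eta$. The paper gets $\mathcal O(1)$ there only for the transformed pencil. An equivalence transformation preserves eigenvalues but not eigenvector component ratios, so the bounded $\widehat t_n$ describes the rescaled vector, not $u_n/u_{n+1}$. Restricting to bounded $\widetilde t_n$ is therefore the correct reading of the statement, not a loss of generality.
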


\begin{proof}
Since $\widetilde{a}_n \approx \mathcal{O}(\eta)$ is small, $|\widetilde{s}_n|$ and $|\widetilde{t}_n|$ have the same order,
we consider the following three cases.

\begin{itemize}
    \item[(i)]If $|\widetilde{s}_n| = |\widetilde{t}_n| = \mathcal{O}(1)$, then it holds that
    \begin{equation} \label{eq: order approx}
    \left(
    \begin{bmatrix}
    \widetilde{a}_n & 0 \\
    \widetilde{t}_n & -1
    \end{bmatrix},
    \begin{bmatrix}
    -\widetilde{s}_n & 1 \\
    \widetilde{a}_n & 0
    \end{bmatrix}
    \right)
    =
    \left(
    \begin{bmatrix}
    \mathcal{O}(\eta) & 0 \\
    \mathcal{O}(1) & -1
    \end{bmatrix},
    \begin{bmatrix}
    \mathcal{O}(1) & 1 \\
    \mathcal{O}(\eta) & 0
    \end{bmatrix}
    \right).
    \end{equation}

    From \eqref{eq: order approx} we see that$
    \begin{bmatrix}
\mathcal{O}(\eta) & 0 \\
\widetilde{t}_n & -1
\end{bmatrix}
\begin{bmatrix}
1 \\
\widetilde{t}_n
\end{bmatrix}
\approx \mathcal{O}(\eta).
$ Hence the vector $(1, \widetilde{t}_n)^{\mathsf{T}}$ is an approximate eigenvector
i.e.,$\, \boldsymbol{y}_n = (u_{n+1},u_n)^{\mathsf{T}} \approx (1,\widetilde{t}_n)^{\mathsf{T}} $
is close to the eigenvector of $(\widetilde{M}_n,\widetilde{L}_n)$ of \eqref{dv: symplectic pair swap and collapse} corresponding  to the smallest eigenvalue of order $\mathcal{O}(\eta)$.
Thus,$ \frac{u_n}{u_{n+1}} \approx \widetilde{t}_n = \mathcal{O}(1).$ More precisely, using the explicit representation $\boldsymbol{y}_{n+1}=\widetilde{L}_n^{-1}\widetilde{M}_n\boldsymbol{y}_1$, we obtain
$$
\begin{bmatrix}
u_{n+1} \\
u_n
\end{bmatrix}
=
\begin{bmatrix}
-\widetilde{s}_n & 1 \\
\widetilde{a}_n & 0
\end{bmatrix}^{-1}
\begin{bmatrix}
\widetilde{a}_n & 0 \\
\widetilde{t}_n & -1
\end{bmatrix}
\begin{bmatrix}
u_1 \\
1
\end{bmatrix}
=
\begin{bmatrix}
\widetilde{a}_n^{-1}\widetilde{t}_n & -\widetilde{a}_n^{-1} \\
\widetilde{a}_n +  \widetilde{a}_n^{-1}\widetilde{s}_n\widetilde{t}_n& -\widetilde{s}_n \widetilde{a}_n^{-1}
\end{bmatrix}
\begin{bmatrix}
u_1 \\
1
\end{bmatrix}.
$$

Therefore, we have
\begin{equation} \label{eq: ratio order}
\frac{u_n}{u_{n+1}}
=
\frac{\widetilde{s}_n(\widetilde{t}_n u_1 -1) + \widetilde{a}_n^2 u_1}
{\widetilde{t}_n u_1 -1}
=
\widetilde{s}_n + \frac{\widetilde{a}_n^2 u_1}{\widetilde{t}_n u_1 -1}
= \mathcal{O}(1),
\end{equation}
provided that $|\widetilde{t}_n u_1-1|$ is bounded away from zero, then the second term is
$\mathcal{O}(\eta^2)$.

\item[(ii)] If $|\widetilde{s}_n| = |\widetilde{t}_n| = \mathcal{O}(\frac{1}{\eta})$, then we have the equivalence transformation
\begin{align} \label{def: equiv symplectic}
&\left(
\begin{bmatrix}
\widetilde{a}_n & 0 \\
\widetilde{t}_n & -1
\end{bmatrix},
\begin{bmatrix}
-\widetilde{s}_n & 1 \\
\widetilde{a}_n & 0
\end{bmatrix}
\right)
\overset{eq.}{\sim}
\left(
\begin{bmatrix}
\mathcal{O}(\eta^2) & 0 \\
\mathcal{O}(1) & -1
\end{bmatrix},
\begin{bmatrix}
\mathcal{O}(1) & 1 \\
\mathcal{O}(\eta^2) & 0
\end{bmatrix}
\right) \notag \\
= &
\left(
\begin{bmatrix}
\widehat{a}_n & 0 \\
\widehat{t}_n & -1
\end{bmatrix},
\begin{bmatrix}
-\widehat{s}_n & 1 \\
\widehat{a}_n & 0
\end{bmatrix}
\right) \equiv (\widehat{M}_n,\widehat{L}_n) .
\end{align}
From \eqref{def: equiv symplectic} we see that$\begin{bmatrix}
\mathcal{O}(\eta^2) & 0 \\
\widehat{t}_n & -1
\end{bmatrix}
\begin{bmatrix}
1 \\
\widehat{t}_n
\end{bmatrix}
\approx \mathcal{O}(\eta^2),
$ i.e. $ \boldsymbol{y}_n = (u_{n+1},u_n)^{\mathsf{T}} \approx (1,\widehat{t}_n)^{\mathsf{T}}$ is close to the eigenvector of $(\widehat{M}_n,\widehat{L}_n)$ of \eqref{def: equiv symplectic} corresponding to the smallest eigenvalue of order $\mathcal{O}(\eta^2)$. Thus,$ \frac{u_n}{u_{n+1}} \approx \widehat{t}_n = \mathcal{O}(1).$ From \eqref{eq: ratio order} it also holds that
\begin{equation} \label{eq: proof of ratio order}
\frac{u_n}{u_{n+1}}
=
\widehat{s}_n + \frac{\widehat{a}_n^2 u_1}{\widehat{t}_n u_1 -1}
= \mathcal{O}(1).
\end{equation}
Again assuming that the denominator is bounded away from zero.

\item[(iii)] If $|\widetilde{s}_n| = |\widetilde{t}_n| = \mathcal{O}(\eta)$, then we have the equivalence transformation
\begin{align} \label{dv: symplectic derive}
\left(
\begin{bmatrix}
\widetilde{a}_n & 0 \\
\widetilde{t}_n & -1
\end{bmatrix},
\begin{bmatrix}
-\widetilde{s}_n & 1 \\
\widetilde{a}_n & 0
\end{bmatrix}
\right)
&\overset{eq.}{\sim}
\left(
\begin{bmatrix}
\mathcal{O}(1) & 0 \\
\mathcal{O}(1) & -1
\end{bmatrix},
\begin{bmatrix}
\mathcal{O}(1) & 1 \\
\mathcal{O}(1) & 0
\end{bmatrix}
\right) \notag \\
=
\left(
\begin{bmatrix}
\widehat{a}_n & 0 \\
\widehat{t}_n & -1
\end{bmatrix},
\begin{bmatrix}
-\widehat{s}_n & 1 \\
\widehat{a}_n & 0
\end{bmatrix}
\right)
&\equiv
(\widehat{M}_n,\widehat{L}_n).
\end{align}
Let $(z_{n+1},z_n)^{\mathsf{T}}$ be the eigenvector of $\widehat{L}_n^{-1}\widehat{M}_n$ corresponding to the eigenvalue $\widehat{\lambda}_n$ satisfying $|\widehat{\lambda}_n|<1$. Then from \eqref{dv: symplectic derive} it holds, in general,  that
\begin{equation} \label{dv: iteration derive}
\widehat{L}_n^{-1}\widehat{M}_n
\begin{bmatrix}
z_{n+1} \\
z_n
\end{bmatrix}
=
\widehat{\lambda}_n
\begin{bmatrix}
z_{n+1} \\
z_n
\end{bmatrix} =
\widehat{\lambda}_n
\begin{bmatrix}
\mathcal{O}(1) \\
\mathcal{O}(1)
\end{bmatrix}.
\end{equation}
Thus, from \eqref{eq: ratio order} by replacing “tilde” by “hat” and $|\widehat{a}_n| = |\widehat{s}_n|=|\widehat{t}_n|=\mathcal{O}(1)$ in \eqref{dv: symplectic derive} follows that
\begin{equation} \label{eq: order of ratio result}
\frac{u_n}{u_{n+1}}
=
\widehat{s}_n + \frac{\widehat{a}_n^2 u_1}{\widehat{t}_n u_1 -1}
= \mathcal{O}(1).
\end{equation}
\end{itemize}
Combining the three cases proves the theorem.
\end{proof}

\begin{remark}

\begin{itemize}
    \item[(i)] From Theorem \ref{thm: Hadamard quotient}, we see that the expectation of the pointwise RQs\\ $\mathbb{E}_{\widehat{\mathrm{p}}}\left[\{r_i\}_{i=0}^{\pm n}\right]$ for $\{r_i\}_{i=0}^{\pm \infty}$ in \eqref{eq: pointwise ratio} with respect to the probability $\widehat{\mathrm{p}} = \left\{\widehat{p}_i = \frac{\varepsilon_i u_i^2}{\u_{\mathbf{o}}^\mathsf{T} \widehat{B}\u_{\mathbf{o}}}\right\}_{i=0}^{\pm n}$ converges to $\mathbb{E}_{\mathrm{p}}\left[\{r_i\}_{i=0}^{\pm \infty}\right] = \rho(\boldsymbol{u}).$ This establishes a consistency property of the estimator: as the sampling of $u_i$ becomes sufficiently dense and the sampled sequence $\{u_i\}_{i=0}^{\pm n}$ provides an increasingly accurate approximation to the true eigenfunction $\boldsymbol{u}$, the cropped expectation $\mathbb{E}_{\mathrm{p}}\!\left[\{r_i\}_{i=0}^{\pm n}\right]$ converges to $\rho(\boldsymbol{u})$ as $n\to\infty$.

    Since $\sum_{i=0}^{\pm \infty}\varepsilon_i u_i^2<\infty$, we have $|u_i|\to0$ as $|i|\to\infty$. In practice, we do not need to expand the index set for $i$ too large. More precisely, we use a pointwise RQ \eqref{eq: pointwise ratio} to approach the global averaged estimator.

    \item [(ii)]

    In \eqref{eq: error analysis}, the residual component $e_i$ measures the discrepancy between $(\mathcal{A}\boldsymbol{u})_i$ and  $\lambda(\mathcal{B}\boldsymbol{u})_i$. From  \eqref{eq: pointwise error term}, the deviation of $\mathbb{E}_{\mathrm p}[r_i]$ from $\lambda$ is governed by $\mathbb{E}_{\mathrm p} \left[ \left|\frac{e_i}{\varepsilon_i u_i} \right| \right].$
    Although the local factor $1/u_i$ may amplify the pointwise error when $|u_i|$ is small, Theorem~\ref{thm: ratio expectation} shows that, under the decay condition
    $ |u_i|\le C_0 |i|^{-\alpha},
    \,\,\, \alpha>1, $
    the weighted error satisfies $
    \mathbb{E}_{\mathrm p} \left[ \left|
    \frac{e_i}{\varepsilon_i u_i}
    \right| \right] \le C e_{\max}.
    $
    Thus, the weighted expectation is not arbitrarily amplified by small local
    values of $|u_i|$.

    \item [(iii)]
    Items (i) and (ii) justify $\mathbb{E}_{\mathrm p}[r_i]$ as a stable estimator of $\rho(\boldsymbol{u})$, and hence of the target eigenvalue when $\rho(\boldsymbol{u})\approx\lambda$. In addition, the expression \eqref{def: ratio estimate} shows that the pointwise RQ depends on the local ratios $\frac{u_{i\pm1}}{u_i}$. Theorem~\ref{thm: expectation of ratio} provides a stability criterion for these ratios under the assumptions $\widetilde a_n=\mathcal{O}(\eta)$ and $|\widetilde s_n|$ and $|\widetilde t_n|$ being of the same order. This supports the stability of the pointwise RQ approach under the corresponding coalesced recurrence structure.
\end{itemize}

\end{remark}

The same construction applies to the 2D HEVP obtained from the 4D embedded formulation in Section~\ref{section: 2Dto4D}. From \eqref{df: 2D TMmode}--\eqref{total: discrete infinite 2D} and \eqref{total: finite matrix derive 4D}--\eqref{def: dot rate of 2D}, we define the RQ

\begin{equation} \label{def: RQ of 2Dto4D}
\rho = \rho(\boldsymbol{u}^{(2)}) =
\frac{ \boldsymbol{u}^{(2)}\!^\mathsf{T} \mathcal{A}^{(2)} \boldsymbol{u}^{(2)} }
{ \boldsymbol{u}^{(2)}\!^\mathsf{T} \mathcal{B}^{(2)} \boldsymbol{u}^{(2)} }, \quad \text{with} \quad
\boldsymbol{u}^{(2)}\!^\mathsf{T} \mathcal{B}^{(2)} \boldsymbol{u}^{(2)} =
\sum_{s\in \mathcal{S}^{(2)}(\infty)} \varepsilon_{s} u_{s}^2,
\end{equation}
the pointwise RQs and the corresponding eigenvalue estimator are given by
\begin{equation} \label{def: statistic r_ij}
r_{s} =
\frac{\left(\widehat{A}^{(2)} \u^{(2)}\right)_{s}}{\left(\widehat{B}^{(2)} \widehat{\u}_{\mathrm{o}}^{(2)}\right)_{s} } = \frac{\left(\widehat{A}^{(2)} \u^{(2)}\right)_{s}}{\varepsilon_{s} u_{s} }
\quad \text{and} \quad
\widehat{\lambda}(k) = \mathbb{E}_\mathrm{p}\left[ \{ r_{s} \}_{s\in \mathcal{S}^{(2)}(n)} \right],
\end{equation}
where $ \mathrm{p} = \left\{ p_{s} =
\frac{\varepsilon_{s} u_{s}^2}{\u_{\mathrm{o}}^{(2)\mathsf{T}} \widehat{B}^{(2)} \u^{(2)}_{\mathrm{o}}}
\right\}_{s\in \mathcal{S}^{(2)}(n)}$ and $\mathcal{S}^{(2)}(n) = \text{vec}(\mathcal{S}(n) \times \mathcal{S}(n))$ with $\mathcal{S}(n) = \{-n, \ldots, 0, \ldots, n\}$ $(n =$ finite or $\infty).$

Then the following results follow from Theorem \ref{thm: Hadamard quotient} and \ref{thm: ratio expectation} immediately.

\begin{thm} \label{thm: 2Dto4D Rayleigh quotient}
Let $\boldsymbol{u}^{(2)}$ be the vectorized 2D sequence and assume that $(\boldsymbol{u}^{(2)})^{\mathsf T} \mathcal{B}^{(2)} \boldsymbol{u}^{(2)}
=
\sum_{s\in\mathcal{S}^{(2)}(\infty)} \varepsilon_s u_s^2 <\infty .$
Then the following statements hold.
    \begin{itemize}
\item[(i)]  The RQ $\rho(\boldsymbol{u}^{(2)})$ defined in \eqref{def: RQ of 2Dto4D} can be represented as the weighted expectation of the pointwise RQs
$ \rho = \rho(\boldsymbol{u}^{(2)}) = \mathbb{E}_\mathrm{p}\Big[ \{ r_{s} \}_{s\in\mathcal{S}^{(2)}(\infty)} \Big]$
with $ \mathrm{p} = \left\{ p_{s} =
\frac{\varepsilon_{s} u_{s}^2}{\boldsymbol{u}^{(2)}\!^\mathsf{T} \mathcal{B}^{(2)} \boldsymbol{u}^{(2)}}
\right\}_{s\in\mathcal{S}^{(2)}(\infty)}. $

\item[(ii)] Let $(\lambda, \boldsymbol{u}^{(2)})$ be an approximate eigenpair of \eqref{df: 2D TMmode} and define the residual component by $e_{s} = (\mathcal{A}^{(2)} \boldsymbol{u}^{(2)})_{s} - \lambda (\mathcal{B}^{(2)} \boldsymbol{u}^{(2)})_{s}.$ Let $\, e_{\max} = \max \{|e_{s}|\}_{s\in\mathcal{S}^{(2)}(\infty)}$, assume additionally that $ \sum_{s\in\mathcal{S}^{(2)}(\infty)} |u_{s}| < C_0. $ Then it holds that $\,\, \mathbb{E}_\mathrm{p}\!\left[ \Big\{ \left| \frac{e_{s}}{\varepsilon_{s} u_{s}} \right| \Big\}_{s\in\mathcal{S}^{(2)}(\infty)} \right]
\le C\, e_{\max}, $
where $ C = \frac{C_0}{\boldsymbol{u}^{(2)^\mathsf{T}} \mathcal{B}^{(2)} \boldsymbol{u}^{(2)}}.$
\item[(iii)] The following bias estimate holds:
$
\left| \mathbb{E}_\mathrm{p}[r_{s}] - \lambda \right|
\le
\mathbb{E}_\mathrm{p}\!\left[ \left| \frac{e_{s}}{\varepsilon_{s} u_{s}} \right| \right]
\le C e_{\max}. $
\end{itemize}
\end{thm}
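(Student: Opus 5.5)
The plan is to transplant the one-dimensional arguments of Theorem~\ref{thm: Hadamard quotient} and Theorem~\ref{thm: ratio expectation} to the vectorized 2D index set $\mathcal{S}^{(2)}(\infty)$. Nothing in those proofs uses the tridiagonal structure of $\mathcal{A}$. They only use two facts: $\mathcal{B}$ is diagonal with positive entries, and the quadratic form $\boldsymbol{u}^\mathsf{T}\mathcal{A}\boldsymbol{u}$ can be written as $\sum_i u_i(\mathcal{A}\boldsymbol{u})_i$. Both facts hold for $\mathcal{A}^{(2)}=\mathcal{I}\otimes\mathcal{A}+\mathcal{A}\otimes\mathcal{I}$ and $\mathcal{B}^{(2)}=\mathrm{blkdiag}(\ldots,\boldsymbol{\varepsilon}_i,\ldots)$. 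I will tacitly assume, as in Theorem~\ref{thm: Hadamard quotient}, that $\varepsilon_s u_s\neq 0$ for every $s$, so that each $r_s$ is defined.

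For (i), I first check that $(\mathcal{A}^{(2)}\boldsymbol{u}^{(2)})_s$ is a finite five-point combination. This holds because each row of $\mathcal{A}^{(2)}$ has at most five nonzeros. Together with $\sum_s\varepsilon_s u_s^2<\infty$ and $\varepsilon_s\ge\varepsilon_{\min}>0$, it gives $\boldsymbol{u}^{(2)}\in\ell^2$ and hence $\mathcal{A}^{(2)}\boldsymbol{u}^{(2)}\in\ell^2$. The series $\boldsymbol{u}^{(2)\mathsf T}\mathcal{A}^{(2)}\boldsymbol{u}^{(2)}=\sum_s u_s(\mathcal{A}^{(2)}\boldsymbol{u}^{(2)})_s$ is then absolutely convergent. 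Next I use that $\mathcal{B}^{(2)}$ is diagonal, so $(\mathcal{B}^{(2)}\boldsymbol{u}^{(2)})_s=\varepsilon_s u_s$. Writing $(\mathcal{A}^{(2)}\boldsymbol{u}^{(2)})_s=\varepsilon_s u_s\, r_s$, which is the definition of $r_s$ in \eqref{def: statistic r_ij}, I get
\[
\rho=\frac{\sum_s \varepsilon_s u_s^2\, r_s}{\sum_s\varepsilon_s u_s^2}=\sum_s p_s r_s .
\]
This repeats \eqref{eq: RQ derive all} line for line. The weights $p_s$ are nonnegative and sum to one, so the sum is a genuine expectation.

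For (ii), I substitute $e_s=(\mathcal{A}^{(2)}\boldsymbol{u}^{(2)})_s-\lambda\varepsilon_s u_s$ into the weighted sum:
\[
\mathbb{E}_\mathrm{p}\Big[\Big|\tfrac{e_s}{\varepsilon_s u_s}\Big|\Big]=\sum_s\frac{\varepsilon_s u_s^2}{\boldsymbol{u}^{(2)\mathsf T}\mathcal{B}^{(2)}\boldsymbol{u}^{(2)}}\frac{|e_s|}{\varepsilon_s|u_s|}=\frac{\sum_s|u_s||e_s|}{\boldsymbol{u}^{(2)\mathsf T}\mathcal{B}^{(2)}\boldsymbol{u}^{(2)}}\le\frac{e_{\max}\,C_0}{\boldsymbol{u}^{(2)\mathsf T}\mathcal{B}^{(2)}\boldsymbol{u}^{(2)}} .
\]
This is exactly the claimed constant $C$. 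Here the summability hypothesis $\sum_s|u_s|<C_0$ is assumed directly, so it replaces the 1D decay condition $|u_i|\le C_0|i|^{-\alpha}$. This sidesteps the only point where dimension would matter: in 2D one would need $\alpha>2$ for summability over $\mathbb{Z}^2$.

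For (iii), I rewrite the residual definition as $r_s=\lambda+e_s/(\varepsilon_s u_s)$, as in \eqref{eq: pointwise error term}. Taking $\mathbb{E}_\mathrm{p}$ and using $\sum_s p_s=1$ gives $\mathbb{E}_\mathrm{p}[r_s]-\lambda=\mathbb{E}_\mathrm{p}[e_s/(\varepsilon_s u_s)]$. The triangle inequality for the convergent series together with (ii) finishes the argument. The only mildly delicate step is justifying the interchange and convergence of the infinite sums in (i). This is resolved by the $\ell^2$ argument above together with the bounded row structure of $\mathcal{A}^{(2)}$. I do not expect a genuine obstacle.
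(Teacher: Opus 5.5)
Your proposal is correct and follows the paper's own route: the paper says only that the result follows immediately from Theorems~\ref{thm: Hadamard quotient} and~\ref{thm: ratio expectation}, transplanted to the vectorized index set $\mathcal{S}^{(2)}(\infty)$, which is exactly what you do. Your two additions are welcome, and the paper leaves both implicit: the absolute-convergence justification (via $\boldsymbol{u}^{(2)}\in\ell^2$ and boundedness of the five-point operator), and your observation that the summability hypothesis $\sum_s|u_s|<C_0$ replaces the 1D decay condition, which on $\mathbb{Z}^2$ would require $\alpha>2$.
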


For a finite index set  $(i,j) \in \mathcal{S}(n) \times \mathcal{S}(n)$, the corresponding pointwise RQ is given by
\begin{equation} \label{eq: approximate of rho}
r_{ij}
=
\frac{(\widehat{A}^{(2)}\u^{(2)})_{ij}}{\varepsilon_{ij} u_{ij}}
=
\frac{(\mathcal{A}^{(2)} \boldsymbol{u}^{(2)})_{ij}}{(\mathcal{B}^{(2)} \boldsymbol{u}^{(2)})_{ij}}.
\end{equation}
By Theorem~\ref{thm: 2Dto4D Rayleigh quotient}, the global RQ is the weighted average of such local quantities. Thus, the local values $\{r_{ij}\}$ provide pointwise approximation on the global estimator $\rho(\boldsymbol{u}^{(2)})$.
From \eqref{df: 2D TMmode} and \eqref{df: matrices of 4D} for convenience, the five-point FD discretization of \eqref{evp: 2D TM mode} can be written as
\begin{equation} \label{eq: five point discretization}
2s_{ij} u_{ij}
\equiv
(4 - h^2\rho \varepsilon_{ij}) u_{ij}
=
u_{i+1,j} + u_{i-1,j} + u_{i,j+1} + u_{i,j-1},
\end{equation}
where $ s_{ij} = \frac{1}{2}\big(4 - h^2\rho \varepsilon_{ij}\big).$
Let
\begin{equation} \label{def: y_ij and z_ij}
y_{ij} = \frac{u_{i+1,j}}{u_{ij}} \,\,\,
\text{and} \,\,\,
z_{ij} = \frac{u_{i,j+1}}{u_{ij}}
\end{equation}
denote the ratios between $u_{i+1,j}$ and $u_{i,j}$ rowwisely, as well as, $u_{i,j+1}$ and $u_{i,j}$ columnwisely. Then \eqref{eq: five point discretization} can be separated into two 1D ratio recurrences,
\begin{equation} \label{def: p_ij with y and z}
s_{ij} = y_{ij} + \frac{1}{y_{i-1,j}},
\qquad
s_{ij} = z_{ij} + \frac{1}{z_{i,j-1}}.
\end{equation}
Using the same symplectic-pair argument as in Theorem~\ref{thm: expectation of ratio}, we obtain the following stability criterion for the row-wise and column-wise local ratios.

\begin{thm} \label{thm: estimation of rho of 4D}
    Suppose that $ \boldsymbol{y}_{n,j} = (u_{n+1,j}, u_{n,j})^\mathsf{T}, $ and $
\boldsymbol{z}_{i,n} = (u_{i,n+1}, u_{i,n})^\mathsf{T} $
are close to the eigenvectors of $ \, \widetilde{L}_{n,j}^{-1}\widetilde{M}_{n,j}$
and $\, \widetilde{L}_{i,n+1}^{-1}\widetilde{M}_{i,n}$ corresponding to the
smallest eigenvalues, respectively, where
\begin{subequations} \label{total: symplectic matrices}
    \begin{equation} \label{def: symplectic 1}
\widetilde{M}_{n,j} =
\begin{bmatrix}
\widetilde{a}_{n,j} & 0 \\
\widetilde{t}_{n,j} & -1
\end{bmatrix},
\qquad
\widetilde{L}_{n,j} =
\begin{bmatrix}
-\widetilde{s}_{n,j} & 1 \\
\widetilde{a}_{n,j} & 0
\end{bmatrix},
\end{equation}

\begin{equation}\label{def: symplectic 2}
\widetilde{M}_{i,n} =
\begin{bmatrix}
\widetilde{a}_{i,n} & 0 \\
\widetilde{t}_{i,n} & -1
\end{bmatrix},
\qquad
\widetilde{L}_{i,n} =
\begin{bmatrix}
-\widetilde{s}_{i,n} & 1 \\
\widetilde{a}_{i,n} & 0
\end{bmatrix}.
\end{equation}
\end{subequations}
Assume that $ \widetilde{a}_{n,j}^2 < \eta^2 $ and $ \widetilde{a}_{i,n}^2 < \eta^2, |\widetilde{s}_{n,j}|$ and $|\widetilde{t}_{n,j}|$, as well as,  $|\widetilde{s}_{i,n}|$ and $|\widetilde{t}_{i,n}|$ have the same orders, respectively. Then the ratios $ \frac{1}{y_{n,j}} = \frac{u_{n,j}}{u_{n+1,j}}, $ and $ \frac{1}{z_{i,n}} = \frac{u_{i,n}}{u_{i,n+1}} $ in \eqref{def: y_ij and z_ij} are of order $\mathcal{O}(1)$.
\end{thm}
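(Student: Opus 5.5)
The plan is to reduce the statement to two independent applications of Theorem~\ref{thm: expectation of ratio}: one along each grid row $j$ and one along each grid column $i$. Fix a row index $j$. By \eqref{def: p_ij with y and z}, the ratios $y_{ij}=u_{i+1,j}/u_{ij}$ satisfy
\[
y_{ij}=s_{ij}-\frac{1}{y_{i-1,j}},
\]
which is exactly the scalar recurrence \eqref{def: iteration} with $s_i$ replaced by $s_{ij}$. I would therefore write, as in \eqref{def: matrix iteration}, $\boldsymbol{y}_{i,j}=L_{i,j}^{-1}M_{i,j}\boldsymbol{y}_{i-1,j}$. Here $M_{i,j}=\mathrm{diag}(1,-1)$ and $L_{i,j}=\begin{bmatrix}-s_{ij}&1\\1&0\end{bmatrix}$, which is a $1\times1$ symplectic pair of the form \eqref{def: symplectic pairs} with $A=1$, $T=0$, $S=s_{ij}$.

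The next step is to apply Lemma~\ref{lemma: symplectic pairs} repeatedly, assuming the invertibility conditions $T_2-S_1\neq0$ at each merge. As in \eqref{dv: symplectic pair swap and collapse}, this coalesces the product $\prod_{i=1}^{n}L_{i,j}^{-1}M_{i,j}$ into the single pair $(\widetilde{M}_{n,j},\widetilde{L}_{n,j})$ of \eqref{def: symplectic 1}. The entries of that pair are $\widetilde a_{n,j},\widetilde s_{n,j},\widetilde t_{n,j}$, and the recursive formulas for $T_{(12)},S_{(12)},A_{(12)}$ preserve the required form. The hypotheses of the present theorem on this pair are then literally those of Theorem~\ref{thm: expectation of ratio}: $\widetilde a_{n,j}^2<\eta^2$, $|\widetilde s_{n,j}|$ and $|\widetilde t_{n,j}|$ of the same order, and $\boldsymbol{y}_{n,j}$ near the eigenvector of the smallest eigenvalue. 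Hence its case analysis applies verbatim:
\begin{itemize}
\item[(i)] orders $\mathcal{O}(1)$;
\item[(ii)] orders $\mathcal{O}(1/\eta)$, handled via the equivalence transformation \eqref{def: equiv symplectic};
\item[(iii)] orders $\mathcal{O}(\eta)$, handled via \eqref{dv: symplectic derive}.
\end{itemize}
In each case this yields $u_{n,j}/u_{n+1,j}=\widetilde s_{n,j}+\widetilde a_{n,j}^2u_{1,j}/(\widetilde t_{n,j}u_{1,j}-1)=\mathcal{O}(1)$, or the hatted analogue.

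The column direction is identical after exchanging the roles of $i$ and $j$. For fixed $i$, the second relation in \eqref{def: p_ij with y and z}, $z_{ij}=s_{ij}-1/z_{i,j-1}$, gives transfer pairs in $j$. Coalescing these gives $(\widetilde M_{i,n},\widetilde L_{i,n})$ of \eqref{def: symplectic 2}, and the same three cases give $u_{i,n}/u_{i,n+1}=\mathcal{O}(1)$.

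The main obstacle is justifying that the five-point relation \eqref{eq: five point discretization} splits into the two 1D recurrences \eqref{def: p_ij with y and z} with the same $s_{ij}$. That splitting is an assumption built into the setup, not something that follows from \eqref{eq: five point discretization} alone, so I would state it explicitly as the modelling hypothesis under which the row and column recurrences hold. I would likewise carry over the side condition that the denominators $\widetilde t u_1-1$ stay bounded away from zero.
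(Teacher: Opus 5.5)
Your reduction follows the paper's route: the paper gives no separate proof, only stating that the row- and column-wise ratios follow from the symplectic-pair argument of Theorem~\ref{thm: expectation of ratio} applied to the split recurrences \eqref{def: p_ij with y and z}, and you are right that this splitting is an added hypothesis rather than a consequence of \eqref{eq: five point discretization}. You also copy a slip from \eqref{def: matrix iteration}: with $\boldsymbol{y}_{i,j}=(u_{i+1,j},u_{i,j})^{\mathsf T}$, $M=\mathrm{diag}(1,-1)$ and $L=\begin{bmatrix}-s_{ij}&1\\1&0\end{bmatrix}$ one gets $L^{-1}M\boldsymbol{y}_{i,j}=-\boldsymbol{y}_{i-1,j}$, a backward step, whereas the forward step $u_{i+1,j}=s_{ij}u_{ij}-u_{i-1,j}$ is encoded by the pair with $A=1$, $T=s_{ij}$, $S=0$; that pair has the same form, so Lemma~\ref{lemma: symplectic pairs} and the three-case analysis carry over unchanged.
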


We finally discuss the relation between the spectrum of the original low-dimensional HEVP and that of its embedded high-dimensional formulation. We present the argument for the 1D HEVP and its associated 2D embedded EVP; the 2D-to-4D case can be treated analogously.

Using \eqref{evp: 1D HEVPs}, \eqref{eq: projection relationship}--\eqref{eq: projection field} and the chain rule calculation on \eqref{evp: 1Dto2D continous projection} and \eqref{eq: Bloch expand}, we have
\begin{equation}\label{eq: fprojection operator}
\varepsilon(r)^{-1} \nabla_r^2 u(r)
=
\mathcal{E}(Pr)^{-1} \left( \nabla_P + i P^\mathsf{T} \k \right)^2 U_p(Pr),
\end{equation}
where $\k$ is a given Bloch vector. From \eqref{df: Spectral Method 2D}, we define the infinite-dimensional Floquet-Bloch operators
\begin{equation}\label{def: infinite martrix}
\mathcal{K}_\infty(\k) = \bigl[ \mathcal{K}_{\boldsymbol{\xi}\boldsymbol{\xi}}(\k) \bigr], \quad \mathcal{M}_\infty = \bigl[ \mathcal{M}_{\boldsymbol{\xi}\boldsymbol{\xi}^{\prime}} \bigr] = \bigl[ \mathcal{E}_{\boldsymbol{\xi}^{\prime}-\boldsymbol{\xi}} \bigr],
\end{equation}
where \(\boldsymbol{\xi}, \boldsymbol{\xi}' \in \mathcal{J}_N\), and the truncation size \(N \to \infty\) as in \eqref{set: reciprocal space}. Then applying cut-and-project technique to \eqref{total: discrete form of 2D}, \eqref{df: Spectral Method 2D} and using \eqref{eq: fprojection operator} we obtain formally
\begin{equation} \label{eq: finite estimate to infinite}
\mathcal{B}^{-1} \mathcal{A} \boldsymbol{u} + \mathcal{O}(h^2)
=
\left( \mathcal{M}_\infty^{-1} \mathcal{K}_\infty \right) U
=
\widetilde{\lambda}\, U, \quad h \to 0^+.
\end{equation}


The reconstructed 1D eigenfunction $\widehat{u}(r)=U(Pr)$ is obtained by restricting the embedded eigenfunction to the irrational slice $\mathbf{x}=Pr$. This slice is dense in the torus $\mathcal{T}^2$, but it is not, in general, an invariant subspace of the operator pair $(\mathcal{K}_\infty(\mathbf{k}),\mathcal{M}_\infty)$ in \eqref{def: infinite martrix}. Therefore, the embedded eigenvalue $\widetilde{\lambda}$ should not be identified directly with an eigenvalue of the original infinite-dimensional low-dimensional problem. Instead, the projected RQ provides the appropriate low-dimensional spectral indicator.

\begin{thm}
Suppose $ \widetilde{\lambda} \in  \sigma\bigl(\mathcal{K}_\infty(\k), \mathcal{M}_\infty\bigr)$ with $U$ being the associate eigenvector in \eqref{eq: finite estimate to infinite}.  Let $\boldsymbol{u}=U(Pr)$ be the corresponding reconstructed low-dimensional sequence. Then the RQ
$\rho(\boldsymbol{u})$ satisfies $
\rho(\boldsymbol{u})+\mathcal{O}(h^2) \in
\sigma(\mathcal{A},\mathcal{B}), $ as $ h\to0^+, $
in the sense of an approximate spectral value of the low-dimensional
discretized operator pair.
\end{thm}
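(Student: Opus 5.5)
The plan is to read ``approximate spectral value'' in the Weyl-sequence sense. We want a residual vector whose relative size, measured in the $\mathcal{B}$-weighted norm, is $\mathcal{O}(h^2)$. Then we apply the standard Rayleigh-quotient (Kato--Temple) argument for the symmetric pair $(\mathcal{A},\mathcal{B})$, with $\mathcal{B}$ bounded above and below by $\varepsilon_{\max}$ and $\varepsilon_{\min}$.

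\textbf{Step 1: pointwise residual of the continuous reconstruction.} Since $\widetilde{\lambda}$ is an eigenvalue of $(\mathcal{K}_\infty(\k),\mathcal{M}_\infty)$, the function $U$ satisfies the embedded equation \eqref{evp: 1Dto2D continous projection} on $\mathcal{T}^2$. Restricting to the line $\x=Pr$ and using the chain-rule identities \eqref{eq: chain rule difference}--\eqref{eq: chain rule ord2 diff}, equivalently \eqref{eq: fprojection operator}, the function $\widehat u(r)=U(Pr)$ solves $-\widehat u''=\widetilde{\lambda}\,\varepsilon\,\widehat u$ exactly on $\mathbb{R}$. Sampling at $r_i$ and using the Taylor expansion of the three-point stencil gives
\[
(\mathcal{A}\boldsymbol{u})_i=\widetilde{\lambda}\,(\mathcal{B}\boldsymbol{u})_i+e_i,\qquad |e_i|\le \tfrac{h^2}{12}\,\sup|\widehat u^{(4)}|.
\]
This is the perturbation equation \eqref{eq: error analysis} with $\lambda=\widetilde{\lambda}$ and $e_{\max}=\mathcal{O}(h^2)$. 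The bound on $\widehat u^{(4)}$ holds provided $\sum_{\boldsymbol\xi}|U_{\boldsymbol\xi}|\,|P^{\mathsf T}(\k+\boldsymbol\xi)|^4<\infty$, which is a regularity assumption on $U$ that I would state explicitly. This is the content of the formal identity \eqref{eq: finite estimate to infinite}.

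\textbf{Step 2: from the residual to the Rayleigh quotient.} By Theorem~\ref{thm: Hadamard quotient}, $\rho(\boldsymbol{u})=\mathbb{E}_{\mathrm p}[r_i]$. By Theorem~\ref{thm: ratio expectation}, under the decay hypothesis on $u_i$, we get $|\rho(\boldsymbol{u})-\widetilde{\lambda}|\le Ce_{\max}=\mathcal{O}(h^2)$. Consequently $\mathcal{A}\boldsymbol{u}-\rho\mathcal{B}\boldsymbol{u}=\mathbf e+(\widetilde{\lambda}-\rho)\mathcal{B}\boldsymbol{u}$, and so
\[
\frac{\|\mathcal{B}^{-1/2}(\mathcal{A}-\rho\mathcal{B})\boldsymbol{u}\|}{\|\mathcal{B}^{1/2}\boldsymbol{u}\|}\le \frac{\|\mathbf e\|_{\ell^2}}{\sqrt{\varepsilon_{\min}}\,\|\mathcal{B}^{1/2}\boldsymbol{u}\|}+\mathcal{O}(h^2).
\]

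\textbf{Step 3: conclude.} The operator $\mathcal{B}^{-1/2}\mathcal{A}\mathcal{B}^{-1/2}$ is bounded and self-adjoint. For such an operator, $\mathrm{dist}(\rho,\sigma)\le\|(\cdot-\rho)v\|/\|v\|$ for any $v\ne0$. Taking $v=\mathcal{B}^{1/2}\boldsymbol{u}$ yields $\mathrm{dist}(\rho(\boldsymbol{u}),\sigma(\mathcal{A},\mathcal{B}))=\mathcal{O}(h^2)$, which is the claim.

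\textbf{Main obstacle.} The delicate point is that Step 3 needs an $\ell^2$ bound on $\mathbf e$, not only $e_{\max}$. The sequence $\boldsymbol{u}$ is quasiperiodic and typically does not decay, so $\boldsymbol{u}\notin\ell^2$ unless we invoke the decay assumption of Theorem~\ref{thm: ratio expectation}. I would first try to enforce the decay by assumption, in line with the hypothesis $\boldsymbol{u}^{\mathsf T}\mathcal{B}\boldsymbol{u}<\infty$. Together with the componentwise bound $|e_i|\lesssim h^2|\widehat u^{(4)}(\zeta_i)|$, this gives $\|\mathbf e\|_{\ell^2}=\mathcal{O}(h^2)\|\boldsymbol{u}\|$-type control.

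If that fails, I would switch to a truncated Weyl sequence. Multiply $\boldsymbol{u}$ by a smooth cutoff $\chi(ih/L)$ with $L\to\infty$; the commutator with $\mathcal{A}$ contributes only $\mathcal{O}(1/L)$, since the cutoff is slowly varying. This produces approximate eigenvectors for $\rho$ with residual $\mathcal{O}(h^2)+o(1)$. Such a construction fits the phrase ``in the sense of an approximate spectral value''.
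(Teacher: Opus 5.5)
Your argument has the same skeleton as the paper's proof. Both take an $\mathcal{O}(h^2)$ residual from \eqref{eq: finite estimate to infinite}, then use Theorems~\ref{thm: Hadamard quotient} and~\ref{thm: ratio expectation} to pass from the componentwise residual to $\rho(\boldsymbol{u})$. Where they differ is that you make rigorous the two steps the paper leaves formal.

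\begin{itemize}
\item You derive the residual concretely: exact restriction of the embedded equation to $\x=Pr$, then the Taylor remainder of the three-point stencil under an explicit Fourier-decay assumption on $U$. The paper reads the residual off the formal identity.
\item The paper simply asserts that ``there exists an approximate spectral value $\lambda\in\sigma(\mathcal{A},\mathcal{B})$'' with $\mathcal{A}\boldsymbol{u}=\lambda\mathcal{B}\boldsymbol{u}+\mathcal{O}(h^2)$, and then applies Theorem~\ref{thm: ratio expectation} with that $\lambda$. You instead apply it with $\lambda=\widetilde{\lambda}$. You then prove proximity to the spectrum via $\mathrm{dist}(z,\sigma(T))\le\|(T-z)v\|/\|v\|$ for the bounded self-adjoint $T=\mathcal{B}^{-1/2}\mathcal{A}\mathcal{B}^{-1/2}$, whose spectrum is $\sigma(\mathcal{A},\mathcal{B})$.
\end{itemize}

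This gives ``approximate spectral value'' a precise meaning, $\mathrm{dist}(\rho(\boldsymbol{u}),\sigma(\mathcal{A},\mathcal{B}))=\mathcal{O}(h^2)$. It also shows that spectral proximity needs a relative $\ell^2$ residual, not $e_{\max}$, which the paper's proof never addresses.

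One correction to your ``main obstacle'': the decay route is not just typically unavailable, it is vacuous.

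\begin{itemize}
\item Under your regularity assumption, plus $\sum_{\boldsymbol{\xi}}|U_{\boldsymbol{\xi}}|<\infty$, the sequence $u_i=\widehat u(r_0+ih)$ is a uniform limit of trigonometric polynomials in $i$, hence almost periodic. (You need the extra $\ell^1$ condition for $\widehat u$ to be continuous. The $|P^{\mathsf T}(\k+\boldsymbol{\xi})|^4$-weighted condition does not imply it, since these projected frequencies can come arbitrarily close to $0$.)
\item Shifting by arbitrarily large $\epsilon$-almost periods shows that an almost periodic sequence with $u_i\to0$ vanishes identically.
\item So $\boldsymbol{u}^{\mathsf T}\mathcal{B}\boldsymbol{u}<\infty$ and $|u_i|\le C_0|i|^{-\alpha}$ fail for every nonzero reconstructed sequence. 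Your Step~2 and the paper's proof both rely on these hypotheses through Theorems~\ref{thm: Hadamard quotient}--\ref{thm: ratio expectation}.
\end{itemize}

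Your cutoff argument therefore has to be the main proof, not a fallback. In that version, make the following adjustments.

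\begin{itemize}
\item Drop Theorem~\ref{thm: ratio expectation}; its proof's constant $\|\boldsymbol{u}\|_{1}/\boldsymbol{u}^{\mathsf T}\mathcal{B}\boldsymbol{u}$ is of the form $\infty/\infty$. Instead set $(v_L)_i=\chi(ih/L)\,u_i$ and use Cauchy--Schwarz: $|\rho(v_L)-\widetilde{\lambda}|\le\|\mathcal{B}^{-1/2}(\mathcal{A}-\widetilde{\lambda}\mathcal{B})v_L\|/\|\mathcal{B}^{1/2}v_L\|$. Use the Hermitian quotient here, since $U(Pr)$ is complex in Bloch form and the $\boldsymbol{u}^{\mathsf T}$-based weights are not a probability distribution.
\item The same quantity bounds $\mathrm{dist}(\widetilde{\lambda},\sigma(\mathcal{A},\mathcal{B}))$, so both statements follow from one residual estimate.
\item To make the estimate relative, use that a nonzero almost periodic sequence has positive Bohr mean of $|u_i|^2$. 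Then $\|\mathcal{B}^{1/2}v_L\|^2\gtrsim L/h$, while the stencil residual entries are $\mathcal{O}(h^2)$ and the commutator entries are $\mathcal{O}(1/L)$, about $\tfrac{2}{L}|\chi'|\,|\widehat u'|$ each. This gives relative residual $\mathcal{O}(h^2)+\mathcal{O}(1/L)$.
\item Read $\rho(\boldsymbol{u})$ as $\lim_{L\to\infty}\rho(v_L)$. This is exactly the windowed estimator in item (i) of the Remark after Theorem~\ref{thm: expectation of ratio}.
\end{itemize}

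With these adjustments your argument is a genuine proof of what the paper only sketches.
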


\begin{proof}
 For a $ \widetilde{\lambda} \in  \sigma\bigl(\mathcal{K}_\infty(\k), \mathcal{M}_\infty\bigr)$, \eqref{eq: finite estimate to infinite} implies that $\mathcal{A}\boldsymbol{u} + \mathcal{O}(h^2)$ is approximately parallel to $\mathcal{B} \boldsymbol{u}$.
Hence $\boldsymbol{u}$ is an approximate eigenvector of $(\mathcal{A},\mathcal{B})$ with a residual of order \(\mathcal{O}(h^2)\). Therefore, there exists an approximate spectral value $\lambda \in \sigma(\mathcal{A}, \mathcal{B})$ such that $\mathcal{A}\boldsymbol{u} = \lambda \mathcal{B} \boldsymbol{u} + \mathcal{O}(h^2).$ By Theorem \ref{thm: Hadamard quotient} $\rho(\boldsymbol{u}) = \mathbb{E}_{\mathrm{p}} \left[
\left\{
\frac{(\mathcal{A}\boldsymbol{u})_i}{(\mathcal{B}\boldsymbol{u})_i}
\right\}_{i=0}^{\pm \infty}
\right]$. The weighted-error estimate in Theorem~\ref{thm: ratio expectation} shows that
the residual of order $\mathcal{O}(h^2)$ is not arbitrarily amplified by the
componentwise division. Consequently, $\rho(\boldsymbol{u}) + \mathcal{O}(h^2) = \lambda \in \sigma(\mathcal{A}, \mathcal{B}), \text{as}  \,\, h \to 0^+. $
\end{proof}

We further record the corresponding spectral inclusion relation for the
continuous operators.

\begin{thm} \label{thm: inclusion relation}
It holds that
\begin{equation}\label{set: spectrum including}
\sigma\bigl(\nabla_r^2,\varepsilon(r)\bigr)
\subseteq
\overline{
\bigcup_{\k \in \mathbf{B}}
\sigma\Bigl( (\nabla_P + \imath P^\mathsf{T} \k)^2, \mathcal{E}(\x) \Bigr)
},
\end{equation}
where \(\mathbf{B}\) is the Brillouin zone of the Bloch vector $\k$ in \(\mathcal{T}^2\).
\end{thm}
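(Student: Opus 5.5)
The plan is to prove the inclusion through Weyl sequences, working in the spirit of the cut-and-project correspondence \eqref{eq: fprojection operator}. We interpret $\sigma(\nabla_r^2,\varepsilon(r))$ as the spectrum of the self-adjoint operator $H=-\varepsilon^{-1}\frac{d^2}{dr^2}$ on $L^2(\mathbb{R};\varepsilon\,dr)$. The assumption $0<\varepsilon_{\min}\le\varepsilon\le\varepsilon_{\max}$ makes this weighted space equivalent to $L^2(\mathbb{R})$. For each $\k\in\mathbf{B}$ we similarly interpret the right-hand family as $H_\k=-\mathcal{E}^{-1}(\nabla_P+\imath P^\mathsf{T}\k)^2$ on $L^2(\mathcal{T}^2;\mathcal{E}\,d\x)$. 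In the Fourier basis this operator is exactly the pair $(\mathcal{K}_\infty(\k),\mathcal{M}_\infty)$ of \eqref{def: infinite martrix}. Both operators are self-adjoint and nonnegative, so their spectra are characterized by approximate eigenvectors.

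\textbf{Step 1.} Take $\lambda\in\sigma(H)$. By Weyl's criterion there exist $u_m$ with $\|u_m\|_\varepsilon=1$ and $\|(H-\lambda)u_m\|\to0$. By density we may take each $u_m$ smooth with compact support. A one-dimensional localized bump is not yet of the lifted form $U(Pr)$.

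\textbf{Step 2.} To fix this, we use a direct-integral (Bloch) decomposition of the lifted problem. Consider the family of translates $u_\theta(r)=$ solutions along the slices $\x=Pr+\theta$, $\theta\in\mathcal{T}^2$. Let $\widetilde H$ be the operator $-\mathcal{E}^{-1}\nabla_P^2$ acting on functions on $\mathbb{R}\times\mathcal{T}^2$, or equivalently the family $H_\theta$ with coefficient $\varepsilon_\theta(r)=\mathcal{E}(Pr+\theta)$.

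The key fact is that the spectrum of $H_\theta$ is independent of $\theta$. The irrational line is dense, so every $\varepsilon_\theta$ is a locally uniform limit of translates $\varepsilon(\cdot+r_j)$ of $\varepsilon$. Translating a compactly supported Weyl vector and using the uniform continuity of $\mathcal{E}$ transfers approximate eigenvectors between $\theta$'s. Hence $\sigma(H)=\sigma(H_\theta)$ for all $\theta$.

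Next, $\widetilde H=\int^\oplus_{\mathcal{T}^2}H_\theta\,d\theta$ in the $\theta$-variable, so $\sigma(\widetilde H)\supseteq\sigma(H)$. This uses the $\theta$-independence: a Weyl sequence for $H$, spread smoothly over $\theta$ with the same error bound, is a Weyl sequence for $\widetilde H$.

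\textbf{Step 3.} On the other side, $\widetilde H$ acts on $L^2(\mathbb{R}\times\mathcal{T}^2)$ and commutes with the shifts $(r,\theta)\mapsto(r+s,\theta-Ps)$. Changing variables $\x=Pr+\theta$ identifies $\widetilde H$ with the operator $-\mathcal{E}(\x)^{-1}\nabla_P^2$ on the appropriate periodic extension. We then apply the Floquet–Bloch transform with respect to the lattice $T_1\mathbb{Z}\times T_2\mathbb{Z}$. This writes it as $\int^\oplus_{\mathbf{B}}H_\k\,d\k$, because conjugation by $e^{\imath\k\cdot\x}$ turns $\nabla_P$ into $\nabla_P+\imath P^\mathsf{T}\k$, as in \eqref{eq: fprojection operator}. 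The standard direct-integral fact, $\sigma\bigl(\int^\oplus A_\k\bigr)\subseteq\overline{\bigcup_\k\sigma(A_\k)}$, then yields \eqref{set: spectrum including}. This fact holds because if $\lambda$ lies outside the closure, $(A_\k-\lambda)^{-1}$ is uniformly bounded by $\operatorname{dist}(\lambda,\cdot)^{-1}$.

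\textbf{Main obstacle.} The delicate point is Step 3's identification of the embedded operator with a Bloch direct integral. The operator $\nabla_P^2$ is degenerate: it differentiates only along $P$, so $H_\k$ is not elliptic on $\mathcal{T}^2$ and its spectrum may be continuous, with $K_{\boldsymbol{\xi}\boldsymbol{\xi}}=(P^\mathsf{T}(\k+\boldsymbol{\xi}))^2$ accumulating near $0$.

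I would first handle this by working entirely with Weyl sequences rather than compact resolvents. I would verify self-adjointness of each $H_\k$ in Fourier space, where $\mathcal{K}_\infty$ is a diagonal multiplication and $\mathcal{M}_\infty$ is a bounded, boundedly invertible Toeplitz operator because $\varepsilon_{\min}>0$. I would then check the measurability of $\k\mapsto H_\k$ directly, which suffices for the direct-integral inclusion. The $\theta$-independence in Step 2 is the second point requiring care. It relies on the rational independence of $p_1,p_2$ through Kronecker's density theorem.
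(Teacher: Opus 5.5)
Your argument is correct in substance and takes a genuinely different route from the paper. The paper starts from an eigenfunction $u$ for $\lambda$ and assumes it is itself quasiperiodic with a periodic hull, $u(r)=U(Pr)$. It then uses the Fourier truncations $U_N$ as approximate eigenvectors of the torus pencil $(\nabla_P^2,\mathcal{E})$, and asserts the Bloch identity $\sigma(\nabla_P^2,\mathcal{E})=\bigcup_{\mathbf{k}\in\mathbf{B}}\sigma\bigl((\nabla_P+\imath P^{\mathsf T}\mathbf{k})^2,\mathcal{E}\bigr)$ without proof. That argument is short and mirrors the discretization actually used. However, it presupposes that $\lambda$ has an eigenfunction and that this eigenfunction has a regular hull on $\mathcal{T}^2$, which fails in general (points of continuous spectrum, localized eigenfunctions). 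Your Weyl-sequence construction needs neither assumption. Once repaired (see below), your Step 3 supplies the half of the Bloch identity that the paper only asserts. The price is the direct-integral machinery and continuity of $\mathcal{E}$ on $\mathcal{T}^2$, so discontinuous coefficients such as those of Example 3 are not covered. Note also that the Kronecker/$\theta$-independence step is more than the inclusion needs. Since $\sup_r|\mathcal{E}(Pr+\theta)-\mathcal{E}(Pr)|\to0$ as $\theta\to0$ and $\|u_m''\|$ stays bounded, $u_m(r)\phi_m(\theta)$ with $\phi_m$ concentrated near $\theta=0$ is already a Weyl sequence for $\widetilde H$.

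The one step that does not hold as written is the identification in Step 3. Because $\theta$ is already a torus variable, the substitution $\mathbf{x}=Pr+\theta$ does not turn $\widetilde H$ into $-\mathcal{E}(\mathbf{x})^{-1}\nabla_P^2$. It turns it into $-\mathcal{E}(\mathbf{x})^{-1}(\partial_r+\nabla_P)^2$ on $L^2(\mathbb{R}_r\times\mathcal{T}^2_{\mathbf{x}})$. There is then no remaining $T_1\mathbb{Z}\times T_2\mathbb{Z}$ translation symmetry for a Floquet--Bloch transform to act on. In these variables your shift group is plain translation in $r$, so the Fourier transform in $r$ diagonalizes the operator. Its fibers are $F_\kappa=-\mathcal{E}^{-1}(\nabla_P+\imath\kappa)^2$ on $L^2(\mathcal{T}^2)$, $\kappa\in\mathbb{R}$. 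To reach the stated form, write $\kappa=P^{\mathsf T}\mathbf{k}$ and reduce $\mathbf{k}=\mathbf{k}_0+\boldsymbol{\xi}$ with $\mathbf{k}_0\in\mathbf{B}$ and $\boldsymbol{\xi}$ in the reciprocal lattice. Conjugating by $e^{\imath\boldsymbol{\xi}\cdot\mathbf{x}}$ gives $F_\kappa\cong H_{\mathbf{k}_0}$, and your uniform resolvent bound then yields the inclusion. Alternatively, bypass $\widetilde H$ and work with $-\mathcal{E}^{-1}\nabla_P^2$ on $L^2(\mathbb{R}^2)$. It decomposes along the lines $\mathbf{x}=Pr+sP^{\perp}$ into your family $H_\theta$ with $\theta=sP^{\perp}$, which contains $H$ at $s=0$. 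The same localization in $s$ therefore puts $\sigma(H)$ in its spectrum. It is this planar operator to which the lattice Floquet--Bloch transform applies, giving $\int^{\oplus}_{\mathbf{B}}H_{\mathbf{k}}\,d\mathbf{k}$.
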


\begin{proof}
For a $\lambda \in \sigma(\nabla_r^2,\varepsilon(r))$, let $u(r)$ be the associated eigenfunction.  Since $u(r)$ and $\varepsilon(r)$ is quasiperiodic, there are periodic functions $U$ and $\mathcal{E}$ on \(\mathcal{T}^2\) such that $ u(r) = U(Pr),$ and $\varepsilon(r) = \mathcal{E}(Pr)$, i.e., we have
\begin{equation}\label{eq: infinite projection}
\varepsilon(r)^{-1} \nabla_r^2 u(r)
=
\mathcal{E}(\x)^{-1} \nabla_P^2 U(\x)\big|_{\x=Pr}.
\end{equation}
However, $U(\x)$ and $\mathcal{E}(\x)$ are not Fourier finite. Let $P_N$ denote the Fourier projection onto the modes with truncation size $N$, and define $U_N = P_N U  \,\, \mathcal{E}_N = P_N \mathcal{E}$. As $N\to\infty$, the Fourier truncations approximate the corresponding periodic functions in the relevant norm. Therefore,
\begin{equation}\label{dv: spectral trunc est}
\bigl\| (\nabla_P^2 - \lambda \mathcal{E}_N) U_N \bigr\| \leq \left\|\left(
\nabla_P^2 - \left(\nabla_P^{(N)}\right)^2
\right)U_N\right\| +
\left\|\left(
\left(\nabla_P^{(N)}\right)^2 - \lambda \mathcal{E}_N
\right)U_N\right\|
\to 0,
\quad \text{as } N \to \infty,
\end{equation}
where $\left(\nabla_P^{(N)}\right)^2$ denotes the $N$-dimensional truncation of $\nabla_P^2$. So, $\lambda \in \sigma(\nabla_P^2,\mathcal{E})$. Since $ \sigma(\nabla_P^2,\mathcal{E}) = \bigcup_{\k \in \mathbf{B}} \sigma\Bigl( (\nabla_P + \imath P^\mathsf{T} k)^2, \mathcal{E} \Bigr)\equiv \Sigma, $ it follows that $\lambda$ is in the closure of $\Sigma$.
\end{proof}

\section{Numerical Experiments}

This section presents numerical experiments to illustrate the main components of the proposed framework, including quasiperiodic reconstruction, pointwise RQ validation, smoothing treatment for discontinuous media, and band structure comparison with supercell approximations.
All numerical experiments in this section are carried out in MATLAB R2024a on a desktop computer equipped with an AMD Ryzen AI 7 H 350 processor with Radeon 860M at 2.00 GHz and 32 GB RAM. The GEVPs arising from the projected Fourier-pseudospectral discretization are solved by the built-in \texttt{eigs} routine.

\subsubsection*{Example 1 (Quasiperiodic structure of the reconstructed eigenfunctions)}

This example illustrates how the eigenfunctions computed from the embedded periodic formulation are projected back to the original low-dimensional physical domain. The purpose is to show that, after projection, the reconstructed functions inherit the quasiperiodic structure induced by the irrational projection relation.

We first consider the 1D-to-2D case. The quasiperiodic coefficient is given by
\begin{equation}
\varepsilon(r) = \cos(p_1r)+\cos(p_2r)+5,
\qquad
p_1=1,\quad p_2=\frac{\sqrt{5}-1}{2}.
\label{def:1D_quasi_evp_func}
\end{equation}
Equivalently, $\varepsilon(r)$ can be viewed as the restriction of the 2D periodic function $\mathcal E(x_1,x_2)=\cos x_1+\cos x_2+5$ along the irrational line $(x_1,x_2)=(p_1r,p_2r)$. Since the slope $p_2/p_1$ is irrational, this line winds densely on the 2D torus $\mathcal T^2$, and the restricted function is quasiperiodic rather than periodic in $r$. This projection mechanism is illustrated in Figure~\ref{fig:1Dto2D_coefficient_projection}.

\begin{figure}[H]
\centering
\begin{subfigure}[t]{0.48\textwidth}
    \centering
    \includegraphics[width=\textwidth]{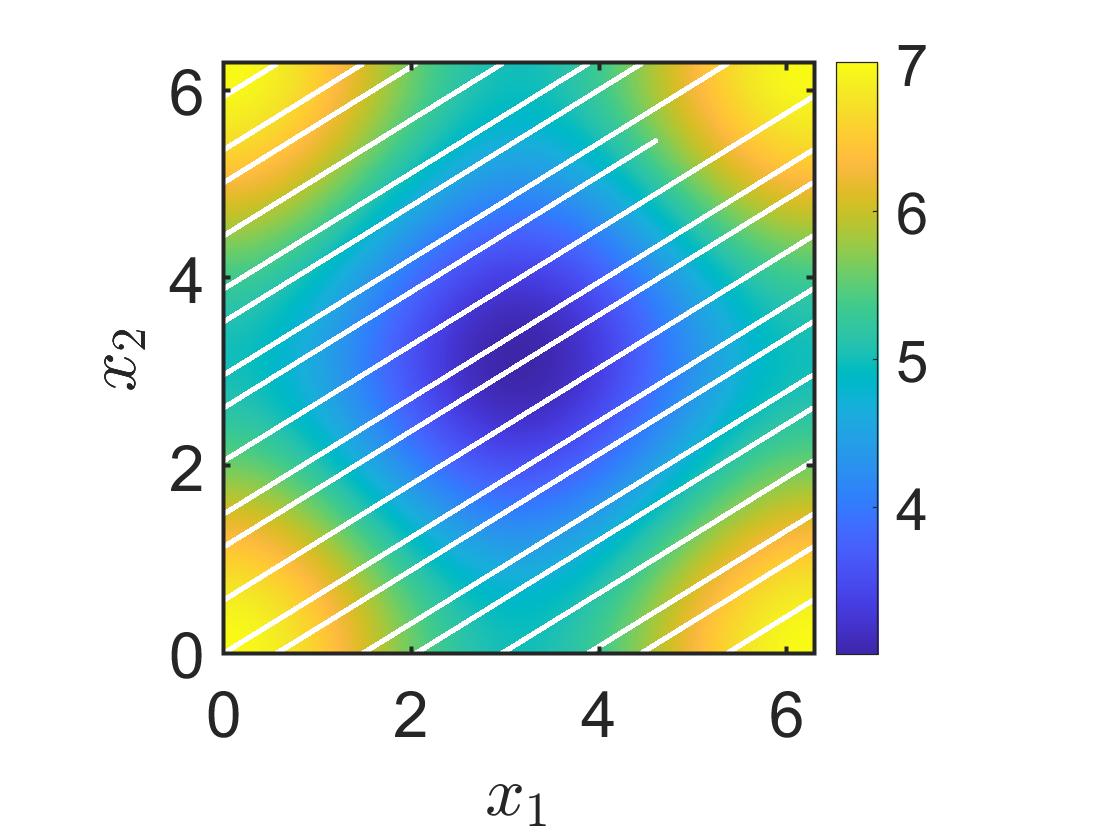}
    \caption{Periodic coefficient on $\mathcal T^2$.}
    \label{subfig:2D_periodic_projection}
\end{subfigure}
\hfill
\begin{subfigure}[t]{0.48\textwidth}
    \centering
    \includegraphics[width=\textwidth]{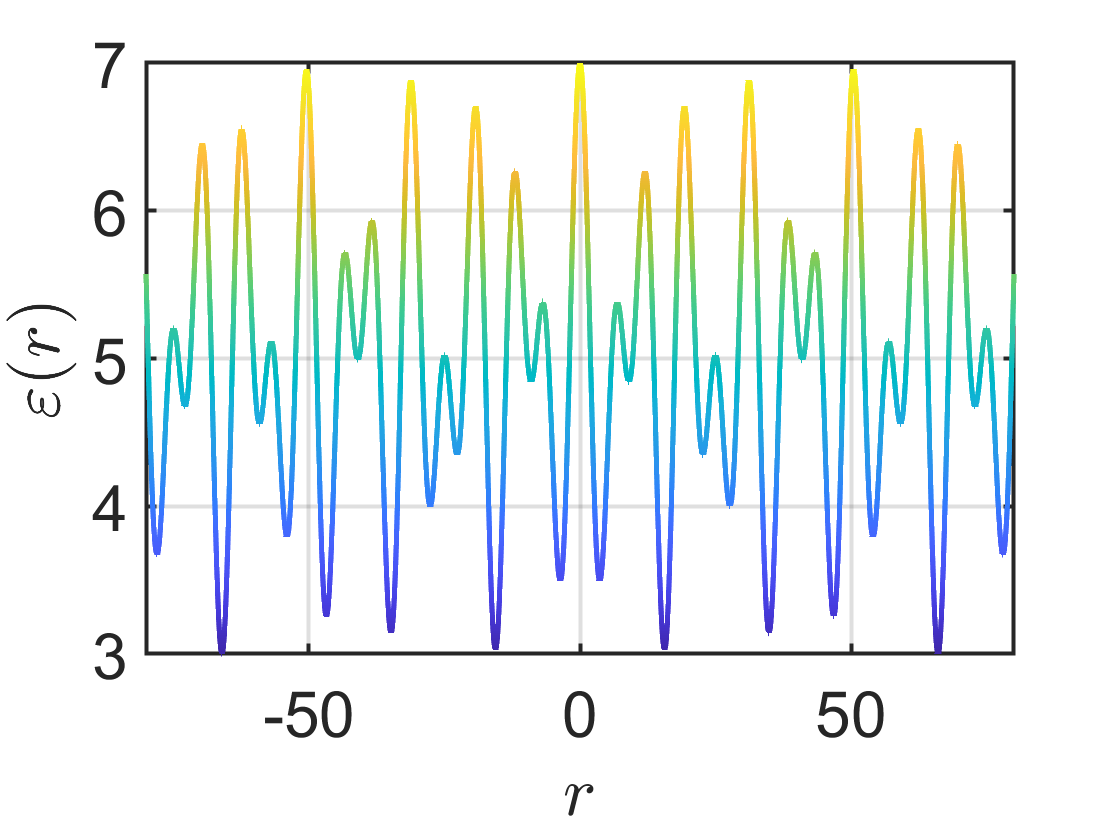}
    \caption{Projected 1D quasiperiodic coefficient.}
    \label{subfig:1D_quasi_coefficient}
\end{subfigure}
\caption{Generation of a 1D quasiperiodic coefficient from a 2D periodic function. (a) Periodic coefficient on $\mathcal T^2$ with the white line indicating the irrational projection direction. (b) Projected 1D quasiperiodic coefficient obtained by restriction the periodic coefficient along the irrational line. }
\label{fig:1Dto2D_coefficient_projection}
\end{figure}

For the 2D-to-4D case, the quasiperiodic coefficient is generated from the 4D periodic function
\begin{equation}
\mathcal{E}(\mathbf{x}) =
\cos(x_1)+\cos(x_2)+\sin(x_3)+\sin(x_4)+10,
\qquad
P=
\begin{bmatrix}
1 & \sqrt{2} & \sqrt{3} & \sqrt{5} \\
\sqrt{7} & 1 & e^{-1} & e
\end{bmatrix}^{\mathsf T}.
\label{def:4D_per_evp_func}
\end{equation}
After solving the corresponding embedded eigenvalue problems, the low-dimensional eigenfunctions are reconstructed according to \eqref{sol: spectral projection} and \eqref{sol: 4D projected function}.

\begin{figure}[htbp]
\centering
\begin{subfigure}[b]{1\textwidth}
    \centering
    \includegraphics[width=0.7\textwidth]{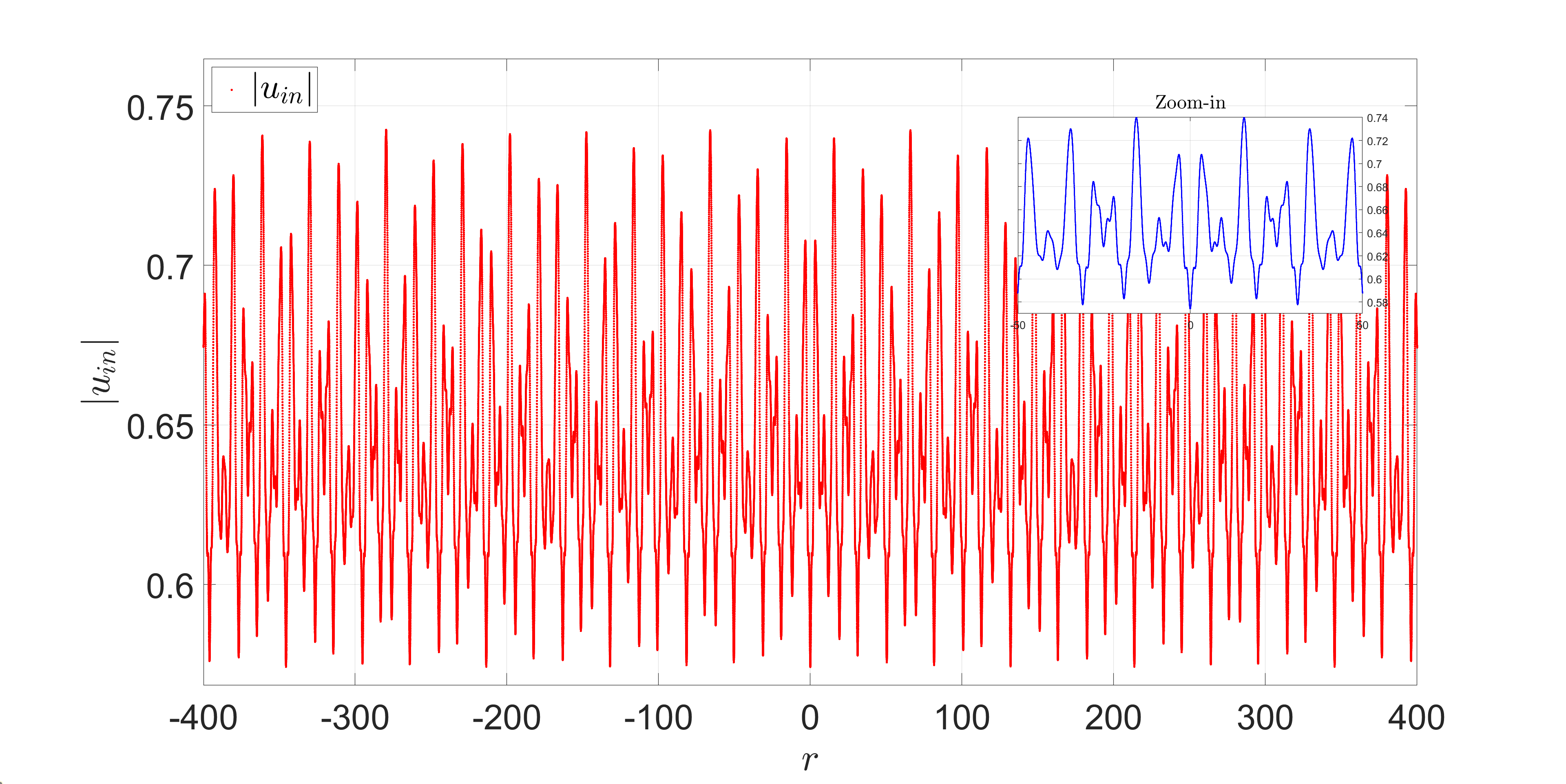}
    \caption{Reconstructed 1D projected eigenfunction $\widehat u(r)$ in the physical domain.}
\end{subfigure}
\hfill
\begin{subfigure}[t]{0.48\textwidth}
    \centering
    \includegraphics[width=\textwidth]{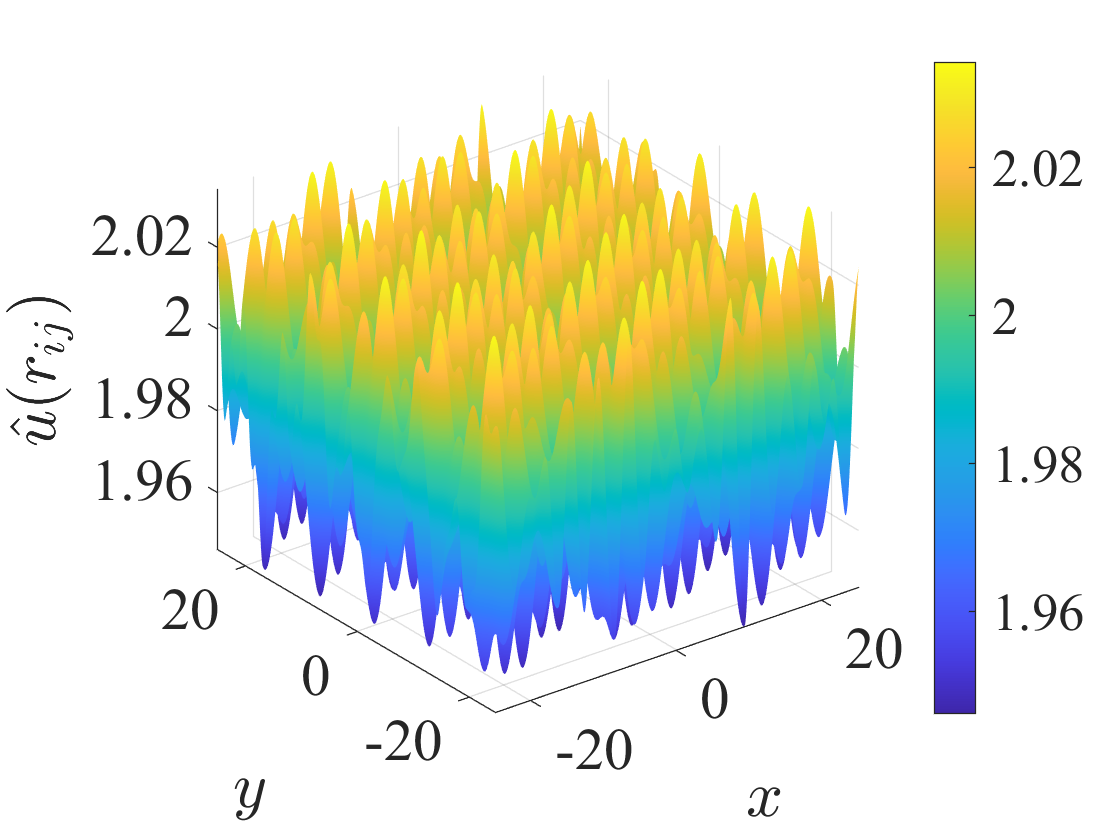}
    \caption{Surface of the reconstructed 2D projected eigenfunction $\widehat u(\mathbf r)$.}
\end{subfigure}
\begin{subfigure}[t]{0.48\textwidth}
    \centering
    \includegraphics[width=\textwidth]{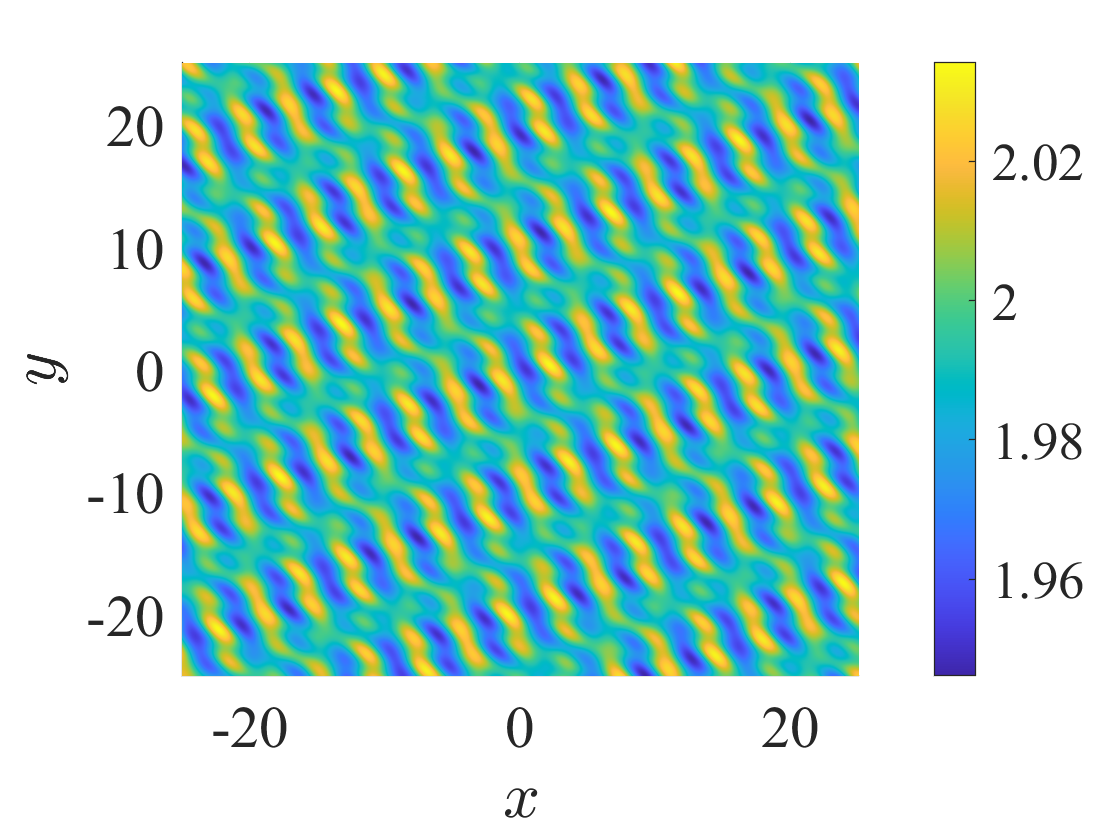}
    \caption{Planar view of the same reconstructed 2D projected eigenfunction.}
\end{subfigure}
\caption{Reconstructed projected eigenfunctions in the physical domain. (a) shows the 1D-to-2D case, where $\widehat{\lambda}(\mathbf{k})=0.492125$ with $\mathbf{k}=[0.35,0]^{\mathsf T}$. (b) and (c) show the same 2D-to-4D reconstructed eigenfunction in surface and planar views, corresponding to $\widehat{\lambda}(\mathbf{k})=0.025178$ with $\mathbf{k}=[0.5,0.2,0,0]^{\mathsf{T}}$, presenting its quasiperiodic spatial oscillations.}
\label{fig: projected field}
\end{figure}

Figure~\ref{fig: projected field} shows the reconstructed eigenfunctions in the physical domain. The oscillatory patterns demonstrate that periodic embedded eigenfunctions are transformed by the projection relation into quasiperiodic low-dimensional fields.

\subsubsection*{Example 2 (Validation of the pointwise RQ estimator)}

We next validate the pointwise RQ estimator developed in Section~\ref{section: Error analysis}. The main goal of this example is to examine whether the reconstructed low-dimensional function satisfies the original low-dimensional eigenvalue relation in an averaged local sense.

For each embedded eigenpair $(\lambda,U)$ computed from the projected periodic formulation, we reconstruct the corresponding low-dimensional function $\widehat{u}$. The pointwise RQs are then evaluated on a sampled physical grid using the cropped FD operators of the original low-dimensional problem. Their weighted expectation is compared with the projected eigenvalue $\lambda$.

In the 1D-to-2D case, $\widehat u(r)$ is reconstructed by \eqref{sol: spectral projection}. The cropped 1D FD operators are assembled according to \eqref{total: finite matrix derive}, the pointwise RQs are computed by \eqref{eq: pointwise ratio}, and the weights are given by \eqref{set: probability density}.

In the 2D-to-4D case, $\widehat u(\mathbf r)$ is reconstructed by \eqref{sol: 4D projected function}. The pointwise RQs are computed from the cropped 2D FD operators as in \eqref{def: statistic r_ij}, with the corresponding weights defined by \eqref{def: RQ of 2Dto4D}.

\begin{figure}[htbp]
\centering
\begin{subfigure}[b]{0.48\textwidth}
    \centering
    \includegraphics[width=\textwidth]{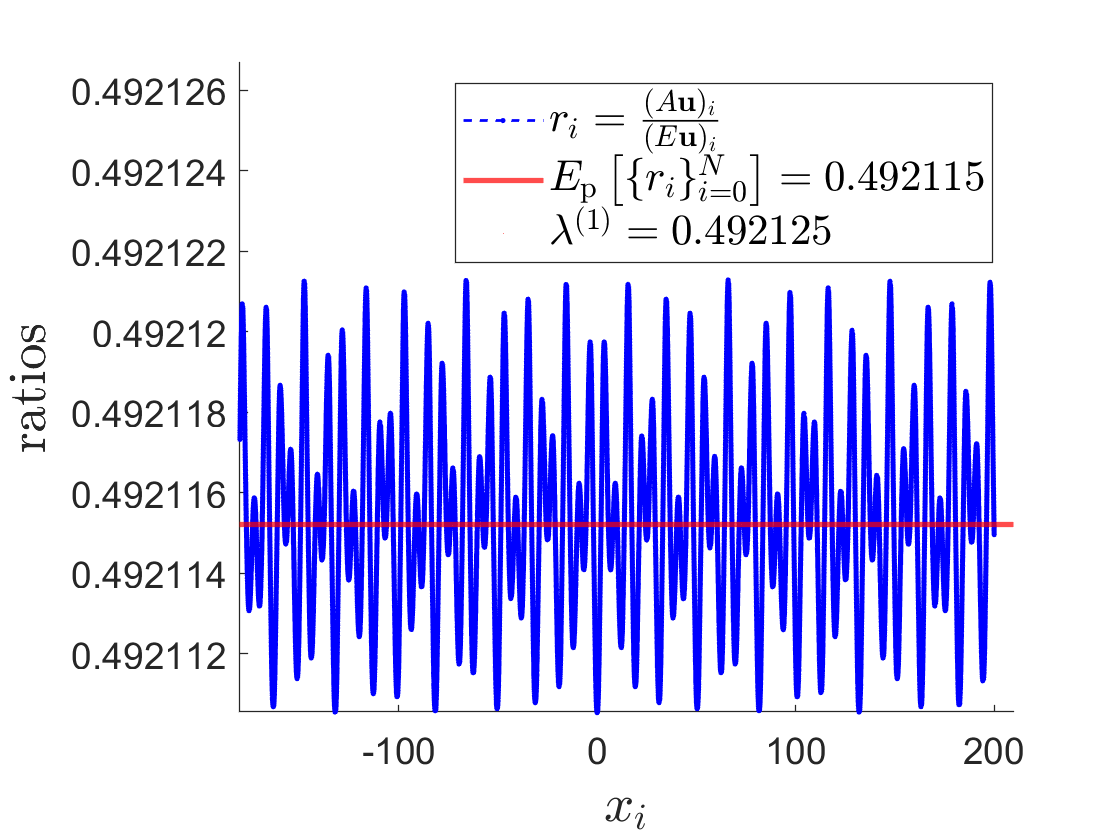}
    \caption{Pointwise RQs for the 1D-to-2D case.}
\end{subfigure}
\hfill
\begin{subfigure}[b]{0.48\textwidth}
    \centering
    \includegraphics[width=\textwidth]{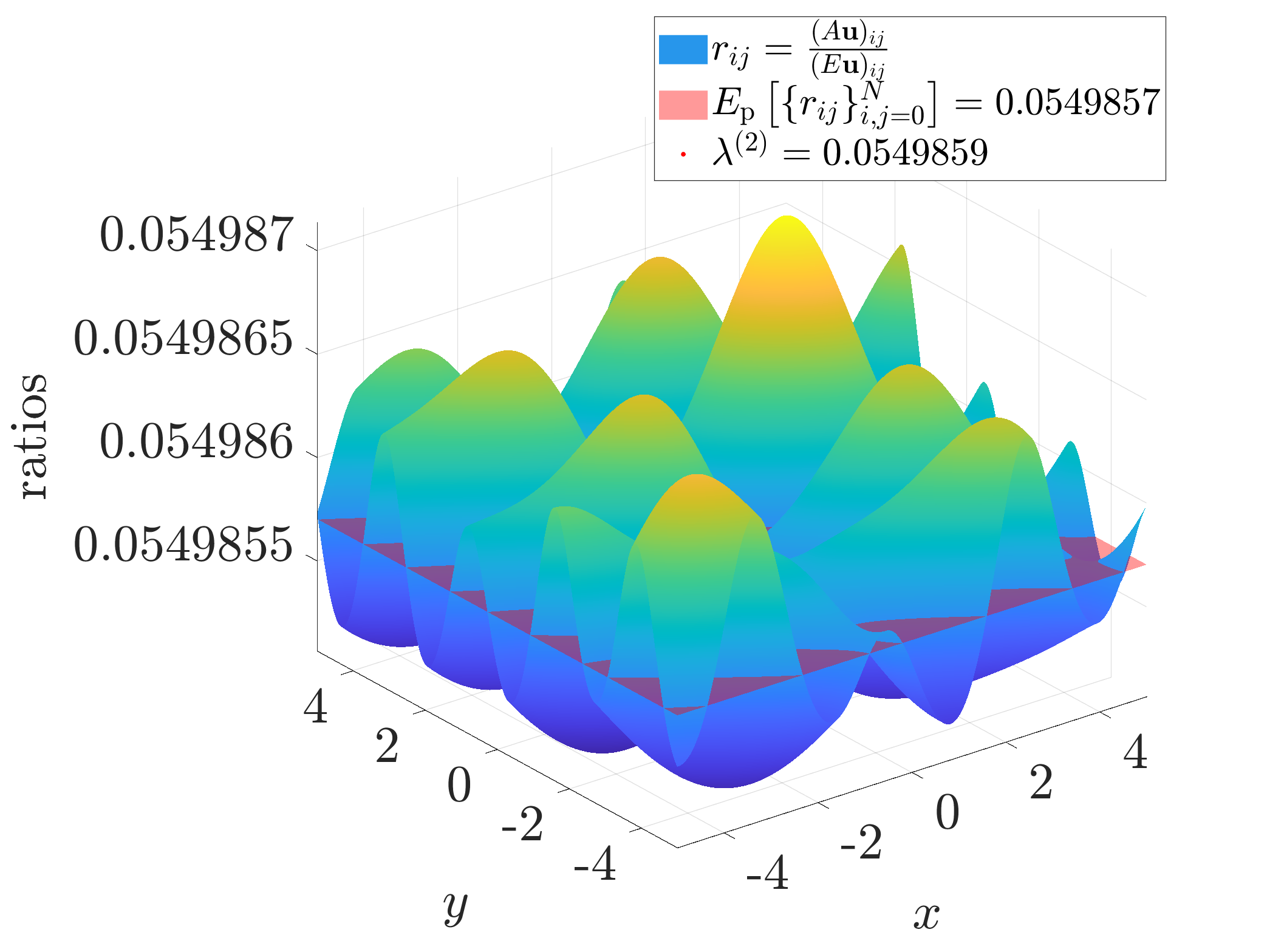}
    \caption{Pointwise RQs for the 2D-to-4D case.}
\end{subfigure}
\caption{Pointwise RQs for the projected eigenvalue problems. (a) Pointwise RQs and their weighted expectation for the 1D-to-2D case, evaluated on $4\times 10^4$ physical sampling points. (b)Pointwise RQs and their weighted expectation for the 2D-to-4D case, evaluated on $10^6$ physical sampling points.}
\label{fig:local_RQs}
\end{figure}

Figure~\ref{fig:local_RQs} shows the pointwise RQs for the two projected eigenvalue problems, with $\mathbf{k}=[0.35,0]^{\mathsf T}$ for the 1D-to-2D case and $\mathbf{k}=[0.5,0.2,0,0]^{\mathsf T}$ for the 2D-to-4D case. To ensure the stability of the weighted expectations, we use $4\times 10^4$ physical sampling points for the 1D-to-2D case and $10^6$ physical sampling points for the 2D-to-4D case. In both cases, the weighted expectation is close to the projected eigenvalue, indicating that the reconstructed eigenfunction satisfies the low-dimensional eigenvalue relation in an averaged local sense. The fluctuations in the 2D-to-4D case are more visible, which is expected since the sampled quasiperiodic structure is more complex.

To quantify the approximation accuracy, we record the errors
\begin{equation}
e_{\mathrm{RQ}}^{(1)}
=
\left|
\mathbb{E}_{\mathrm p}[r_i]-\lambda^{(1)}
\right|,
\qquad
e_{\mathrm{RQ}}^{(2)}
=
\left|
\mathbb{E}_{\mathrm p}[r_{ij}]-\lambda^{(2)}
\right|,
\label{eq:RQ_validation_errors}
\end{equation}
for the 1D-to-2D and 2D-to-4D cases, respectively. The corresponding numerical
values for different numbers of basis functions are reported in
Table~\ref{tab:RQ_validation}.

\begin{table}[htbp]
\centering
\renewcommand{\arraystretch}{1.5}
\caption{Errors $e_{\mathrm{RQ}}^{(1)}$ and $e_{\mathrm{RQ}}^{(2)}$ between the weighted expectation of pointwise RQs and the
projected eigenvalue for 1D-to-2D and 2D-to-4D cases with respect to the number of Fourier bases $N$, respectively.}
\begin{tabular}{|c|c|c|c|c|c|c|c|}
\hline
$N^{(1)}$ (1D-to-2D) & 8 & 12 & 16 & 20 & 24 & 28 & 32 \\ \hline
$e_{\mathrm{RQ}}^{(1)}$ & 1.4e-1 & 8.1e-3 & 3.0e-3 & 1.5e-3 & 7.1e-4 & 5.1e-4 & 4.5e-4 \\ \hline
$N^{(2)}$ (2D-to-4D) & 4 & 6 & 8 & 10 & 12 & 14 & 16 \\ \hline
$e_{\mathrm{RQ}}^{(2)}$ & 2.7e-2 & 2.7e-2 & 3.6e-3 & 2.4e-3 & 1.8e-3 & 9.9e-4 & 1.2e-3 \\ \hline
\end{tabular}
\label{tab:RQ_validation}
\end{table}

The results show that the weighted expectations $\mathbb{E}_{\mathrm p}[r_i]$ and $\mathbb{E}_{\mathrm p}[r_{ij}]$ agree well with the projected eigenvalue $\lambda^{(1)}$ and $\lambda^{(2)}$, respectively. The errors decrease as the number of basis functions increases, supporting the consistency of the pointwise RQ estimator. The slight saturation or nonmonotonicity at larger basis sizes is expected, since the validation also involves finite sampling and FD evaluation on the physical grid.

\subsection*{Example 3 (Smoothing and validation for discontinuous quasiperiodic media)}
\label{subsec:regularity_smoothing}

In this example, we examine the effect of smoothing on the projected spectral approximation for a discontinuous quasiperiodic coefficient. Starting from the original discontinuous coefficient, we introduce smoothed embedded models with different smoothing widths. For each smoothed model, the reconstructed low-dimensional function and its pointwise RQ estimator are then evaluated against the original discontinuous low-dimensional operator, in order to assess whether they still provide an effective approximate eigenpair of the original problem.

We consider the 2D-to-4D projected setting with $\mathbf r=(r_1,r_2)^\mathsf T\in\mathbb R^2$ and $\mathbf x=P\mathbf r\in\mathcal T^4$, where
\begin{equation}
P=
\begin{bmatrix}
1 & \sqrt3 & \sqrt5 & \sqrt{11}\\
\sqrt2 & 1 & \sqrt7 & \sqrt{13}
\end{bmatrix}^{\mathsf T}.
\label{eq:regularity_projection_matrix}
\end{equation}
We use the following embedded coefficient as a reference smooth model
\begin{equation}
\mathcal E_{\mathrm s}(\mathbf x)
=
\exp \bigl(
0.3\cos x_1 + 0.2\sin x_2
\bigr)
+ 0.2\cos x_3 - 0.3\sin x_4 + 4.
\label{eq:smooth_regularity_coefficient}
\end{equation}
The discontinuous coefficient is generated from a shell-energy-type periodic coefficient
\begin{equation}
\mathcal E_{\mathrm d}(\mathbf x) =
\begin{cases}
\varepsilon_B,
&
\cos x_1+\cos x_2+\cos x_3+\cos x_4 > 1, \\
\varepsilon_A,
&
\text{otherwise},
\end{cases} \qquad
\varepsilon_A=1, \quad \varepsilon_B=12.
\label{eq:discontinuous_regularity_coefficient}
\end{equation}

To visualize the discontinuous embedded coefficient, we present two complementary views in Figure~\ref{fig:4D_slices}. The left panel contains two representative 3D slices of the 4D periodic coefficient, obtained by fixing $x_4=0$ and $x_4=\pi/2$, respectively, with $(x_1,x_2,x_3)\in[-\pi,\pi]^3$. These two slices illustrate the two-phase interface structure of the embedded discontinuous coefficient from different sections. The right panel shows the projected coefficient on the 2D physical domain, where the irrational projection matrix induces a quasiperiodic interface pattern.

\begin{figure}[htbp]
\centering
\begin{subfigure}[t]{0.48\textwidth}
    \centering
    \includegraphics[width=\textwidth]{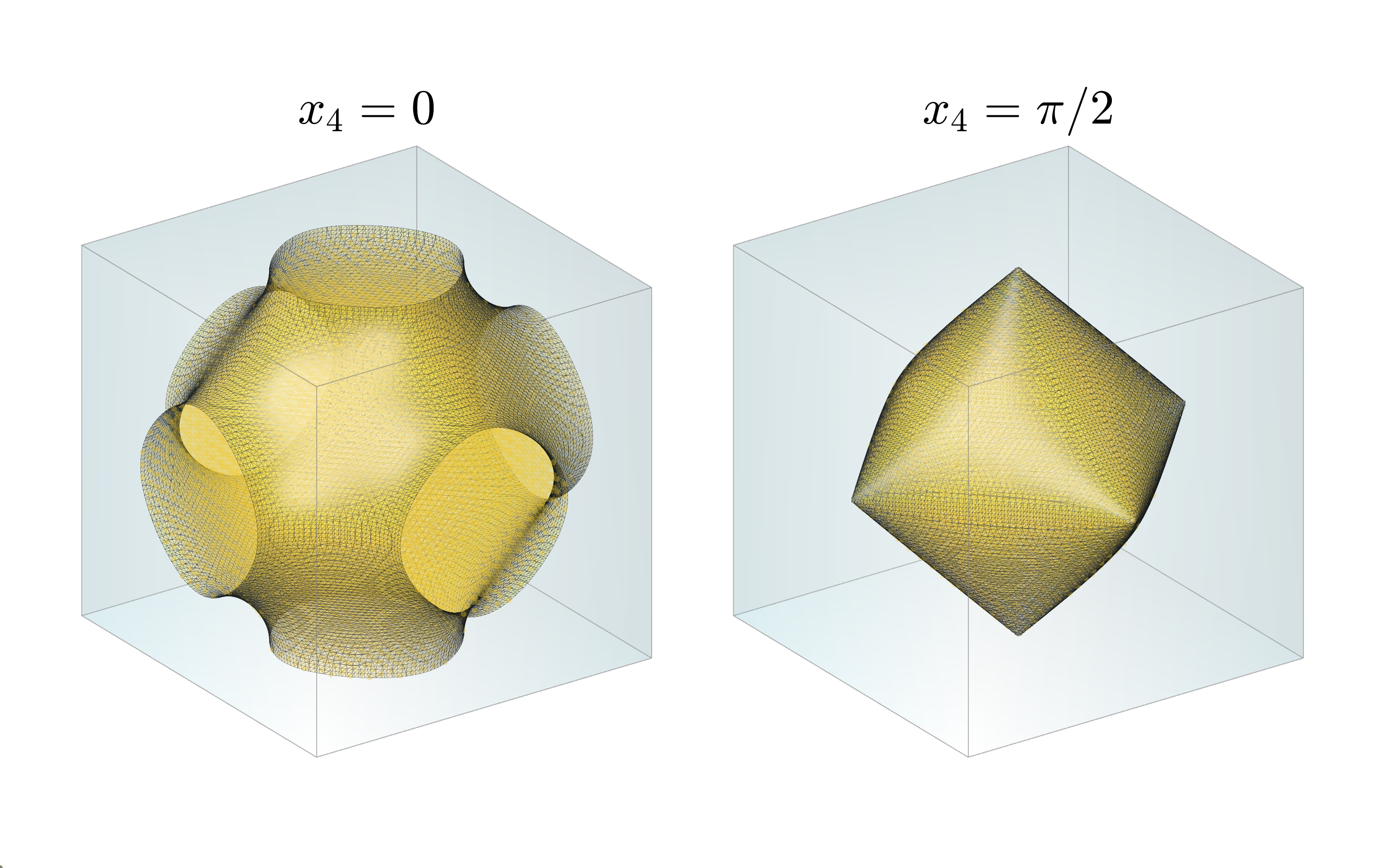}
    \caption{Representative 3D slices at $x_4=0$ and $x_4=\pi/2$, with $(x_1,x_2,x_3)\in[-\pi,\pi]^3$.}
\end{subfigure}
\hfill
\begin{subfigure}[t]{0.48\textwidth}
    \centering
    \includegraphics[width=\textwidth]{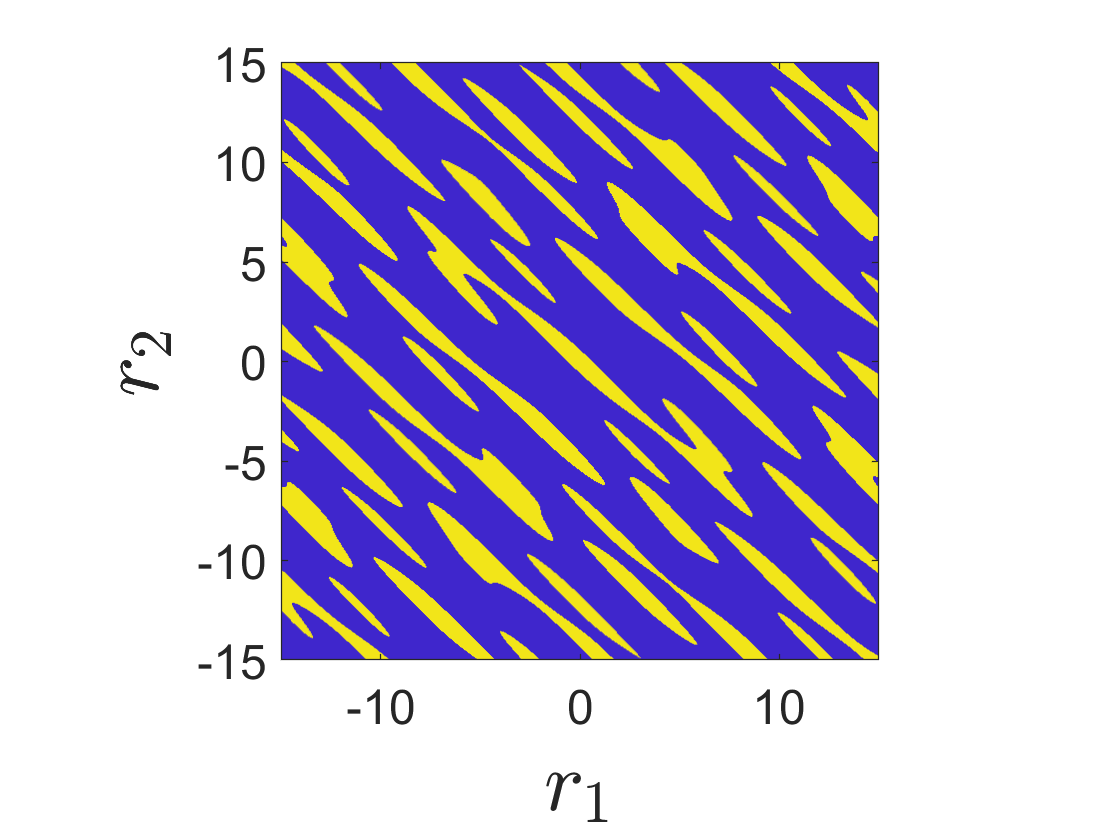}
    \caption{Projected 2D physical slice showing the quasiperiodic interface structure.}
\end{subfigure}
\caption{Visualization of the discontinuous embedded coefficient. (a) shows two representative 3D slices of the 4D periodic coefficient at $x_4=0$ and $x_4=\pi/2$, while (b) shows the projected 2D quasiperiodic coefficient in the physical domain.}
\label{fig:4D_slices}
\end{figure}

To reduce interface-induced Gibbs oscillations, we introduce a Tanh-type smoothing \cite{xiao2005simple} of $\mathcal E_{\mathrm d}$. Let $d(\mathbf x)$ denote the signed distance to the interface, with $d(\mathbf x)>0$ inside the phase $\varepsilon_B$ and $d(\mathbf x)<0$ in the background phase $\varepsilon_A$. The smoothed coefficient is defined by
\begin{equation}
\mathcal E_\tau(\mathbf x)
=
\frac{\varepsilon_A+\varepsilon_B}{2} +
\frac{\varepsilon_B-\varepsilon_A}{2}
\tanh \left( \frac{d(\mathbf x)}{\tau} \right),
\label{eq:smoothed_regularity_coefficient}
\end{equation}
where $\tau>0$ is the smoothing width.

To visualize how smoothing suppresses high-frequency Fourier components, we group the Fourier modes into shells
\begin{equation}
\mathcal J_q
=
\{
\boldsymbol\xi\in\mathcal J:
\|\boldsymbol\xi\|_\infty=q
\},
\qquad
q=0,1,\ldots,N,
\label{def: Fourier mode shells}
\end{equation}
and compute the shell-wise RMS Fourier amplitude \cite{mininni2009scale}
\begin{equation}
A_q^{\mathrm{rms}}(\mathcal E)
=
\left(
\frac{1}{\#\mathcal J_q}
\sum_{\boldsymbol\xi\in\mathcal J_q}
|\mathcal E_{\boldsymbol\xi}|^2
\right)^{1/2}.
\label{eq:shellwise_fourier_energy}
\end{equation}
This quantity measures the average Fourier strength on each frequency shell and is used to compare the decay behavior of $\mathcal E_{\mathrm s}$, $\mathcal E_{\mathrm d}$, and $\mathcal E_\tau$.

Figure~\ref{fig:coefficient_fourier_decay} shows that the smooth coefficient has the fastest Fourier decay, while the discontinuous coefficient retains a pronounced high-frequency tail. After smoothing, the high-frequency amplitudes are reduced, indicating that the smoothed coefficient is more favorable for Fourier spectral approximation.

We next examine whether the smoothed projected models preserve the spectral information of the original discontinuous quasiperiodic problem. For each smoothing width $\tau$, we solve the corresponding projected eigenvalue problem and reconstruct the low-dimensional eigenfunction $\widehat u_\tau$. The reconstructed function is then substituted into the original discontinuous low-dimensional operator to compute the pointwise RQs

\begin{equation}
r_s^{(\tau)} =
\frac{
(\widehat A\widehat{\mathbf u}_\tau)_s
}{
(\widehat B_{\mathrm d}\widehat{\mathbf u}_\tau)_s
},
\label{eq:smoothed_validation_rq}
\end{equation}
where $\widehat B_{\mathrm d}$ is assembled from the original discontinuous
coefficient.

The corresponding weighted expectation
$ \widehat{\lambda}_{\tau} = \mathbb E_{\mathrm p} \left[ \{r_s^{(\tau)}\} \right]$
is used as the low-dimensional spectral estimator evaluated under the original discontinuous model. To measure the local spread of the pointwise RQs around this estimator, we also compute the weighted standard deviation
\begin{equation}
\sigma_{\mathrm{RQ}}(\tau)
= \left(
\mathbb E_{\mathrm p}
\left[
\left(
r_s^{(\tau)} - \widehat{\lambda}_{\tau}
\right)^2
\right]
\right)^{1/2}.
\label{eq:smoothed_rq_std}
\end{equation}

In this way, the smoothed eigenfunction is evaluated against the original discontinuous model rather than only against the smoothed coefficient.
\begin{figure}[htbp]
\centering
\begin{subfigure}[t]{0.48\textwidth}
    \centering
    \includegraphics[width=\textwidth]
    {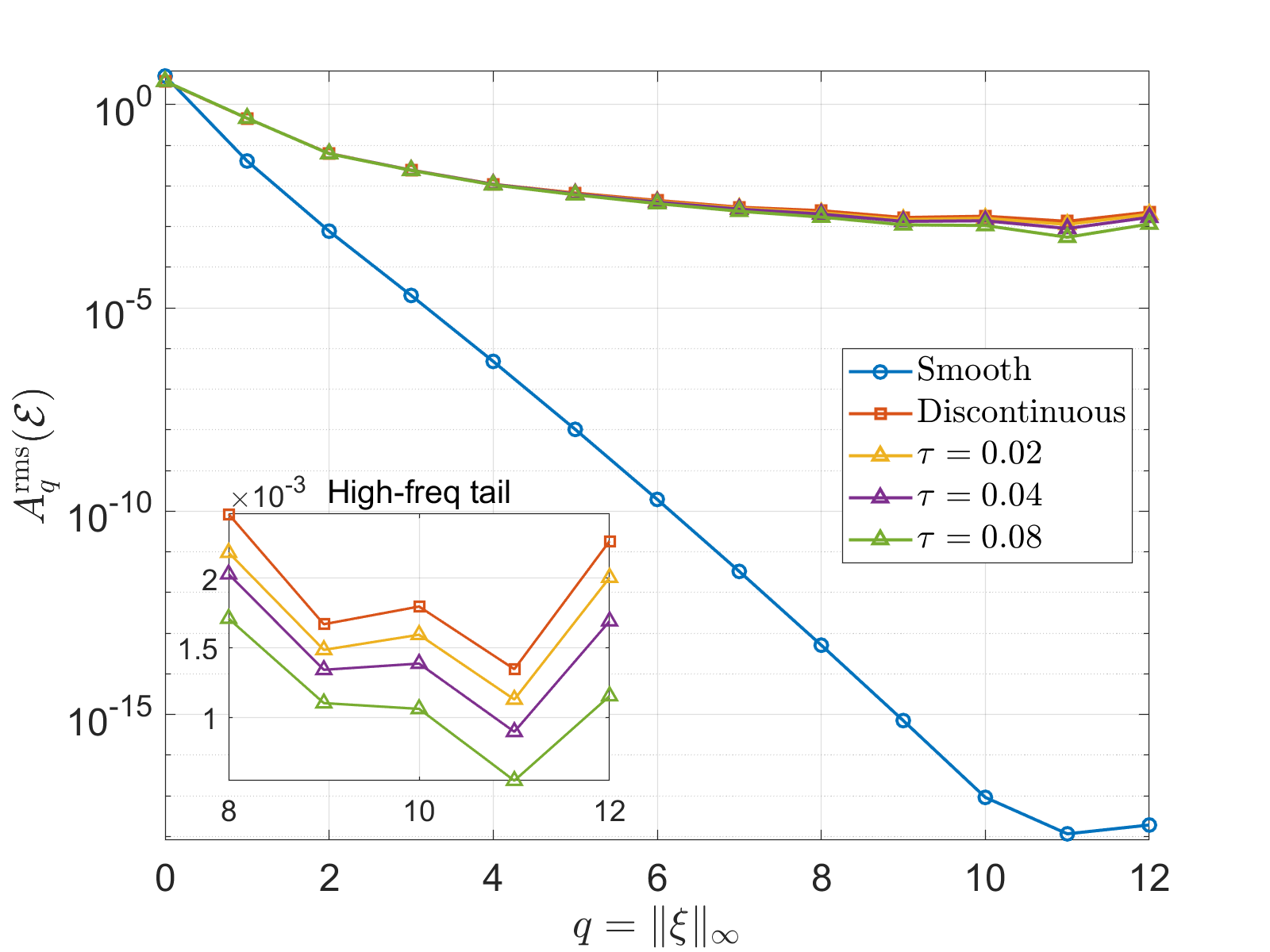}
    \caption{x-axis denotes the Fourier mode shell in \eqref{def: Fourier mode shells} and y-axis denotes the shell-wise RMS Fourier amplitude in \eqref{eq:shellwise_fourier_energy}.
    }
    \label{fig:coefficient_fourier_decay}
\end{subfigure}
\hfill
\begin{subfigure}[t]{0.48\textwidth}
    \centering
    \includegraphics[width=\textwidth]
    {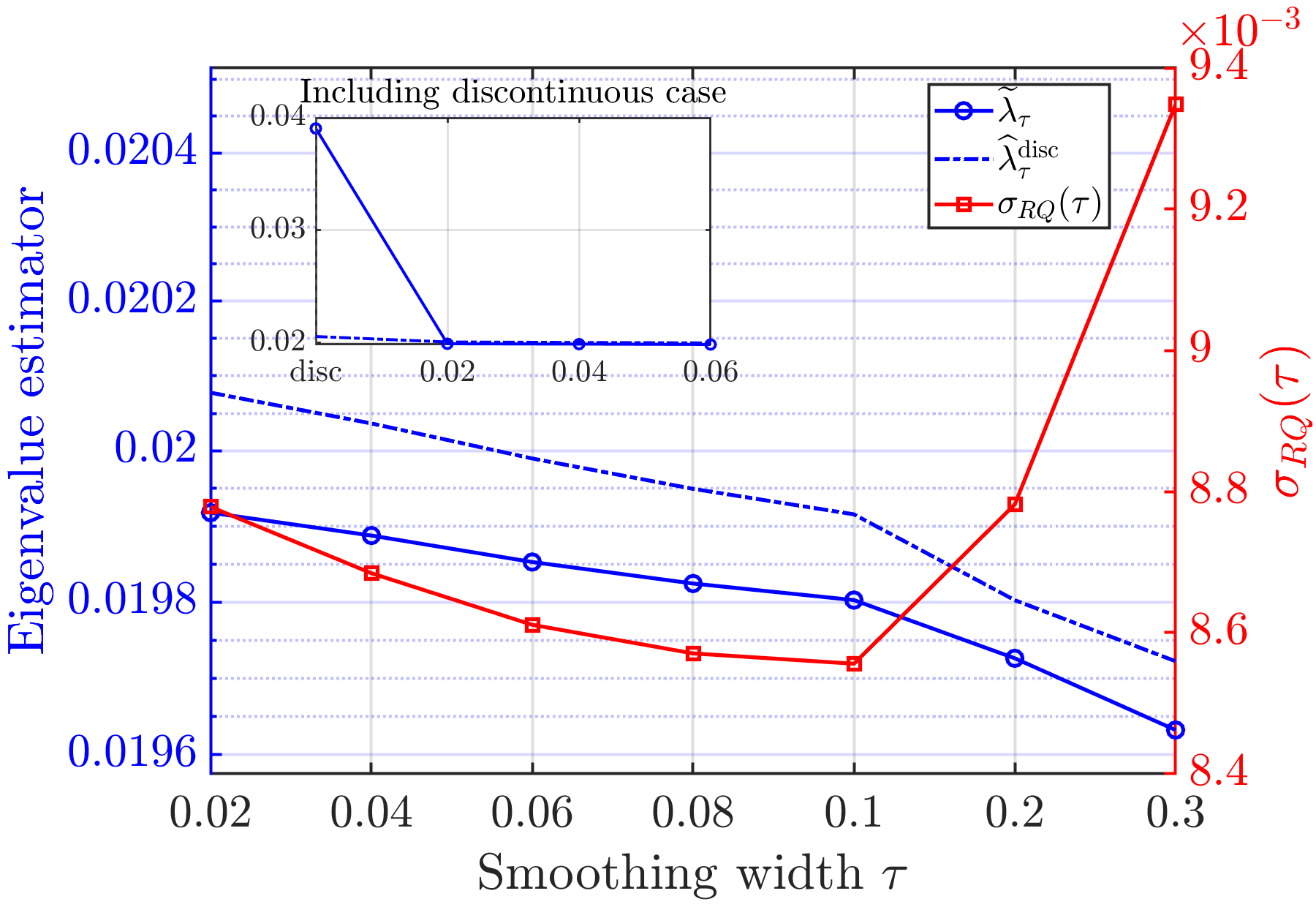}
    \caption{Left axis: eigenvalue estimator $\widehat{\lambda}_\tau$ (blue dashed line) vs. projected eigenvalue $\widetilde{\lambda}_\tau$ (blue line); right axis: weighted standard diviations (red line) of the pointwise RQs, for different smoothed width $\tau$, respectively.}
    \label{fig:smoothing_validation}
\end{subfigure}
\caption{Effect of smoothing on the spectral approximation of discontinuous quasiperiodic media. (a) Shell-wise RMS Fourier amplitudes of the smooth, discontinuous, and smoothed embedded coefficients. The inset highlights the high-frequency Fourier amplitudes of the smoothed coefficients. (b) Projected eigenvalues, low-dimensional RQ estimators, and weighted standard deviations of the pointwise RQs for different smoothing widths.}
\label{fig:smoothing_combined}
\end{figure}

Figure~\ref{fig:smoothing_validation} compares the projected eigenvalue $\widetilde{\lambda}_{\tau}$ of the 30th eigenmode at $\mathbf{k}=[0.35,0.2,0.1,0.5]^{\mathsf T}$ with the corresponding low-dimensional RQ estimator
$\widehat{\lambda}_{\tau} = \mathbb E_{\mathrm p_\tau}\left[\{r_s^{(\tau)}\}\right]$.
Here $r_s^{(\tau)}$ is computed by substituting the reconstructed function $\widehat u_\tau$ into the original discontinuous low-dimensional operator. The weighted standard deviation $\sigma_{\mathrm{RQ}}(\tau)$ is also plotted to measure the local spread of the pointwise RQs around $\widehat{\lambda}_{\tau}$.

The inset shows that, for the original discontinuous model, the weighted expectation of the local RQs differs noticeably from the projected eigenvalue. After smoothing is introduced, this discrepancy is rapidly reduced, indicating that smoothing improves the compatibility between the projected eigenfunction and the low-dimensional RQ estimator. However, the weighted standard deviation does not decrease monotonically with the smoothing width. This suggests that a larger smoothing width is not necessarily preferable: the smoothing should be chosen to suppress interface-induced high-frequency oscillations while still preserving the spectral information of the original discontinuous model.

\subsubsection*{Example 4 (Band folding and projected spectral branches)}
\label{subsec:band_unfolding_fractal}

In this example, we compare the band structures obtained from rational approximant supercell methods and from the proposed projected spectral method. The purpose is to illustrate how the projected formulation reconstructs the intrinsic quasiperiodic spectral branches without introducing the repeated band folding caused by artificial periodic supercells.

Since a quasiperiodic structure has no finite primitive cell, supercell methods replace the irrational projection by rational periodic approximants. The large supercell leads to a small artificial Brillouin zone, so the spectral branches are repeatedly folded and become densely overlapped. Conventional unfolding methods recover effective bands from such folded supercell modes using spectral weights~\cite{zhang2022unfolded}. In contrast, the present projected spectral method reconstructs the quasiperiodic spectral branches directly from the higher-dimensional embedded eigenpairs through the projection relation and pointwise RQ estimators.

We first consider a 1D quasiperiodic medium generated from a two-dimensional
periodic embedding. Let
$\gamma=\frac{\sqrt{5}-1}{2}$ be the inverse golden ratio and
$\theta=\arctan\gamma$. The projection relation is given by
\begin{equation}
    \mathbf{x}=Pr\in\mathcal{T}^2,
    \qquad
    P=(\cos\theta,\sin\theta)^{\mathsf T},
    \qquad
    \varepsilon(r)=\mathcal{E}(Pr).
\end{equation}
We take the smooth periodic embedded coefficient
\begin{equation}
\mathcal{E}(x_1,x_2)
=
\varepsilon_0
+
\exp\!\left(
\alpha\cos x_1+\beta\sin x_2
\right),
\qquad
\alpha=0.5,\quad
\beta=1.3,\quad
\varepsilon_0=2.
\label{eq:smooth_2D_embedding_coefficient}
\end{equation}

The corresponding 1D coefficient
$\varepsilon(r)=\mathcal{E}(Pr)$ is quasiperiodic because the projection
direction has irrational slope determined by $\gamma$.

For the supercell computation, the irrational projection direction is replaced
by Fibonacci rational approximants, which generate periodic approximants of the
quasiperiodic medium on finite supercells. The resulting supercell bands are
then compared with the spectral branches reconstructed by the 1D-to-2D
projected spectral method.

For the supercell computation, the irrational entries in $P$ are replaced by
rational approximants, which generate a periodic approximant of the
quasiperiodic function on a finite 1D supercell. The folded supercell bands are
computed along the path
$$
\Gamma\rightarrow X,
\qquad
\mathbf{k}_{1D}=k_x,
\qquad
k_x\in\left[0,\frac{\pi}{T}\right],
$$
where $T$ is the supercell period.

For the projected spectral method, we use the lifted Bloch path $\Gamma^{\prime}\rightarrow X^{\prime}$,  where the lifted wave vector satisfies
$$
P^{\mathsf T}\mathbf{k}_{2D} = \mathbf{k}_{1D}.
$$
The projected spectral branches are reconstructed from the weighted
expectations of the pointwise RQs associated with the projected eigenfunctions.

We next consider the 2D-to-4D projected problem. The projection relation is
$$
\mathbf{x}=P\mathbf{r}\in\mathcal{T}^{4},
\qquad
\varepsilon(\mathbf{r})=\mathcal{E}(P\mathbf{r}),
$$
where $P$ has the same definition as that in \eqref{eq:regularity_projection_matrix}. The embedded periodic coefficient is chosen as
\begin{equation}
\mathcal{E}(\mathbf{x}) =
\varepsilon_0 + \alpha\bigl(\cos x_1+\cos x_2\bigr) + \beta\bigl(\cos x_3+\cos x_4\bigr) + \gamma\cos(x_1+x_3),
\label{eq:band_unfolding_coefficient}
\end{equation}
with $\varepsilon_0=8, \, \alpha=1.2, \, \beta=1.0, \, \gamma=0.8.$

For the supercell computation, the irrational entries in $P$ are replaced by rational approximants, which generate a periodic approximant on a finite 2D supercell. The folded supercell bands are computed along the path
$$
\Gamma\rightarrow M,
\qquad
\mathbf{k}_{2D}=(k_x,0),
\qquad
k_x\in\left[0,\frac{\pi}{T}\right],
$$
where $T$ is the supercell period in the $x$-direction.

The resulting supercell spectrum again exhibits dense folded branches due to the repeated projection of higher-dimensional spectral information into the reduced Brillouin zone of the periodic approximant.

For the projected spectral method, we use the corresponding lifted 4D Bloch path $\Gamma^{\prime}\rightarrow M^{\prime},$ where each lifted wave vector $\mathbf{k}_{4D}$ satisfies
$$
P^{\mathsf T}\mathbf{k}_{4D} = \mathbf{k}_{2D}.
$$
We then compute the first $N_{\mathrm{eig}}$ eigenvalues of the embedded 4D
eigenvalue problem for each sampled wave vector on
$\Gamma^{\prime}\rightarrow M^{\prime}$.

The projected spectral branches are reconstructed from the corresponding higher-dimensional eigenpairs through the projection relation and the pointwise RQ estimators. Therefore, the resulting spectral curves preserve the intrinsic quasiperiodic wave-vector information carried by the lifted Bloch path, instead of repeatedly folding the spectrum into an artificial low-dimensional Brillouin zone.

\begin{figure}[htbp]
\centering
\begin{subfigure}[t]{0.48\textwidth}
    \centering
    \includegraphics[width=\textwidth]{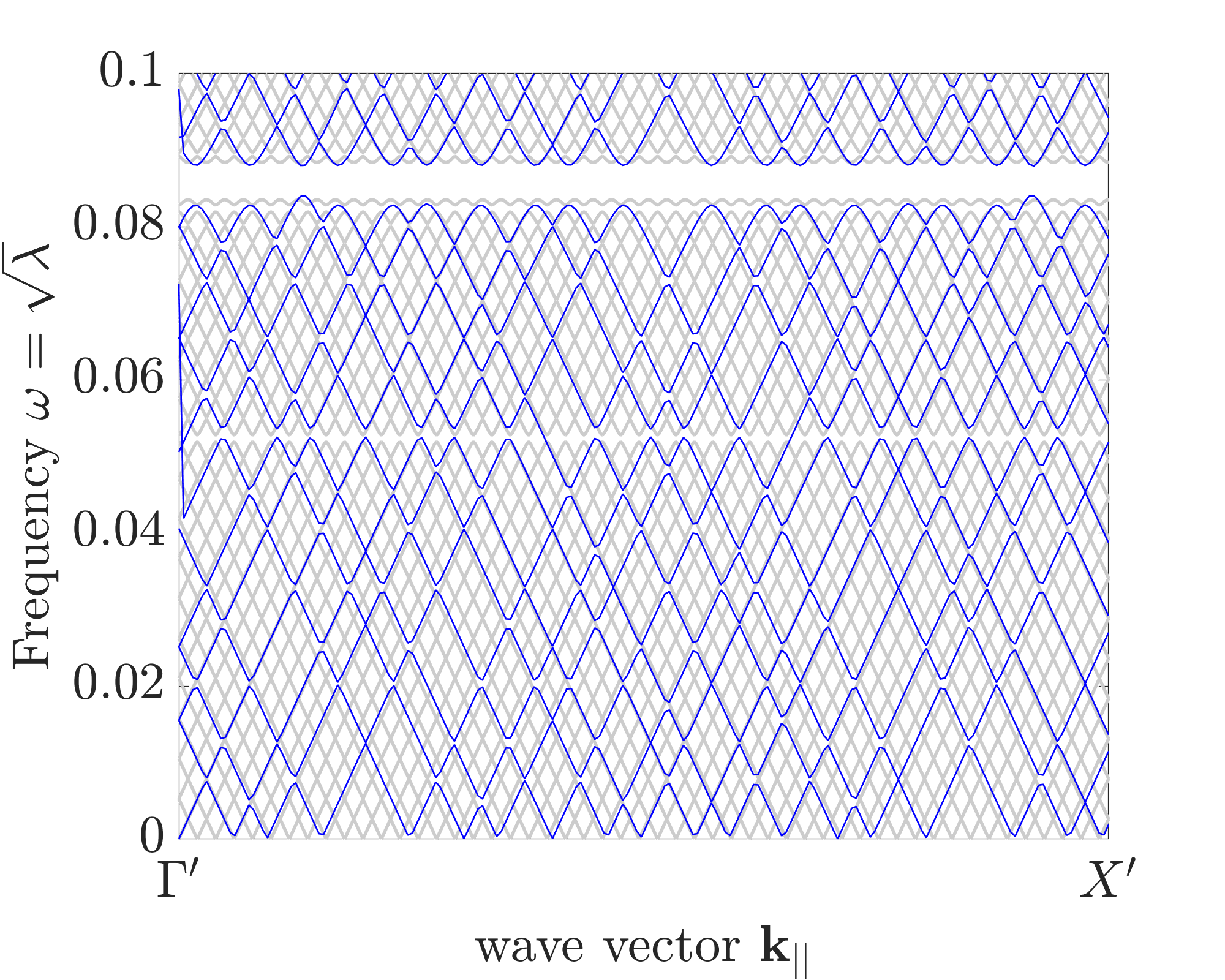}
    \caption{1D quasiperiodic function.}
    \label{fig:band_unfolding_1Dto2D_fibonacci}
\end{subfigure}
\hfill
\begin{subfigure}[t]{0.48\textwidth}
    \centering
    \includegraphics[width=\textwidth]{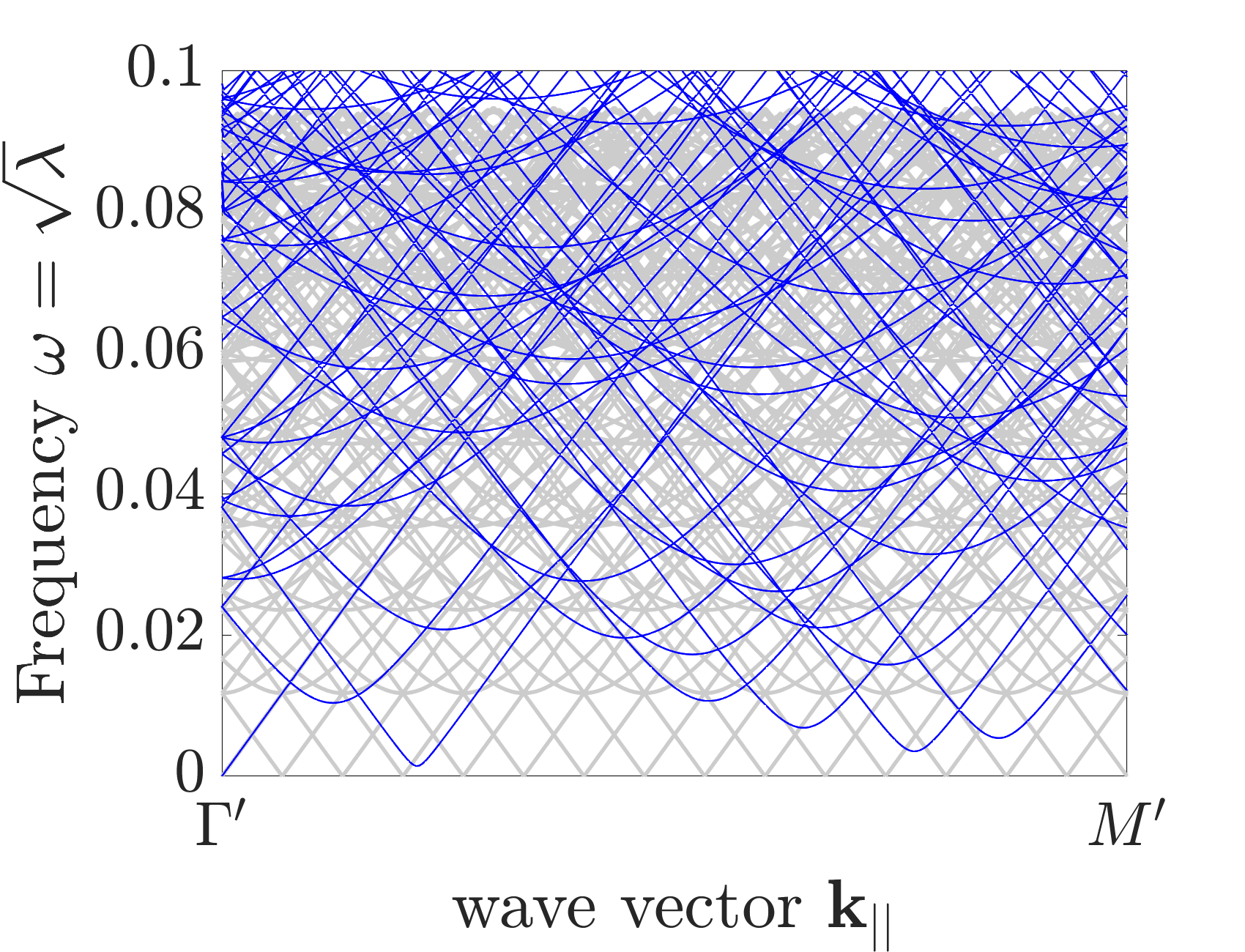}
    \caption{2D quasiperiodic function.}
    \label{fig:band_unfolding_2Dto4D_projection}
\end{subfigure}
\caption{Comparison of supercell bands and projected spectral branches. The gray curves show the folded bands produced by finite periodic supercell approximants, while the blue curves are obtained from the weighted expectations of the local RQs computed from the projected spectral method, representing the corresponding quasiperiodic spectral branches.}
\label{fig:band_unfolding_comparison}
\end{figure}

Figures~\ref{fig:band_unfolding_1Dto2D_fibonacci} and \ref{fig:band_unfolding_2Dto4D_projection} show that the projected spectral branches agree well with the dominant structures contained in the folded supercell spectra. Meanwhile, the projected formulation provides a much cleaner representation of the quasiperiodic spectral branches, since it avoids the dense repeated folding produced by finite supercell approximations. Some additional weak branches may also appear in the projected spectra. These branches can be interpreted as spurious or weakly physical bands caused by the finite Fourier truncation and the projection of high-dimensional modes whose low-dimensional reconstructed fields have small or unstable physical contributions.

Therefore, the proposed projected spectral method should not be interpreted as a band-unfolding procedure in the conventional supercell sense. Instead, it provides a direct reconstruction framework for quasiperiodic spectral branches based on higher-dimensional periodic embeddings and projected pointwise RQ estimators.

\section{Conclusions}
In this paper, we have presented a Fourier-pseudospectral framework in \cref{alg: projected spectral method} that circumvents the Diophantine approximation bottleneck by embedding 1D and 2D quasiperiodic Helmholtz eigenvalue problems into higher-dimensional periodic domains. Beyond solving the embedded periodic eigenvalue problems, we further reconstruct the computed eigenfunctions in the original low-dimensional physical space and represent the weighted expectation of pointwise RQs as the eigenvalue associated with the low-dimensional quasiperiodic system. From the modeling perspective, the resulting eigenpair $(\widehat{\lambda},\widehat u)$ provides a more appropriate approximation to the original low-dimensional quasiperiodic Helmholtz eigenvalue problem than the embedded eigenpair alone. The analysis of pointwise RQs further justifies this interpretation by showing that the weighted RQ expectation is a stable low-dimensional spectral estimator, and that the original quasiperiodic spectrum is contained in the closure of the spectra generated by the higher-dimensional embedded Bloch formulations. Numerical experiments on both continuous and discontinuous media confirm the consistency, stability, and efficiency of the proposed framework.

\bibliographystyle{plain}
\bibliography{research_paper}

\end{document}